\documentclass[onefignum,onetabnum]{siamart251216}

\usepackage{amsfonts,amsmath,amssymb}
\usepackage{mathrsfs,mathtools,stmaryrd,wasysym}
\usepackage[all]{xy}
\usepackage{multirow}
\usepackage{enumerate}
\usepackage{float}
\usepackage{esint}
\usepackage{graphicx,psfrag}
\usepackage{hyperref}
\DeclareMathAlphabet{\mathpzc}{OT1}{pzc}{m}{it}
\usepackage{lineno}
\usepackage{stmaryrd}
\usepackage{enumitem}
\usepackage{pgf,tikz-cd,pgfplots}
\usepackage{amsopn}

\DeclareFontEncoding{FMS}{}{}
\DeclareFontSubstitution{FMS}{futm}{m}{n}
\DeclareFontEncoding{FMX}{}{}
\DeclareFontSubstitution{FMX}{futm}{m}{n}
\DeclareSymbolFont{fouriersymbols}{FMS}{futm}{m}{n}
\DeclareSymbolFont{fourierlargesymbols}{FMX}{futm}{m}{n}
\DeclareMathDelimiter{\VERT}{\mathord}{fouriersymbols}{152}{fourierlargesymbols}{147}

\numberwithin{equation}{section}

\usepackage{mathrsfs}
\def\ds{\displaystyle}

\def\O{\Omega}

\def\b{\beta}

\newcommand{\bb}{\boldsymbol}

\def\CT{{\mathcal T}}

\def\H{\mathrm H}
\def\Q{\mathbb Q}
\def\I{\mathrm I}
\def\V{\mathrm V}

\def\L{\mathrm L}

\def\W{\mathrm W}

\def\div{\mathop{\mathrm{div}}\nolimits}
\def\curl{\mathop{\mathrm{curl}}\nolimits}

\def\bu{\boldsymbol{u}}

\def\bv{\boldsymbol{v}}

\def\bn{\boldsymbol{n}}

\def\bz{\boldsymbol{z}}

\def\bf{\boldsymbol{\textit{f}}}

\def\b0{\boldsymbol{0}}

\def\bPi{\boldsymbol{\Pi}}

\def\bw{\boldsymbol{w}}

\renewcommand\H{\mathrm{H}}
\renewcommand\L{\mathrm{L}}
\renewcommand\Q{\mathrm{Q}}
\def\bVh{\mathbf{V}_h}

\def\bVV{\mathrm{V}}

\def\bZ{\mathbf{Z}}
\def\bZh{\mathbf{Z}_h}

\def\bV{\mathbf{V}}

\def\div{\mathop{\mathrm{div}}\nolimits}
\def\rot{\mathop{\mathrm{rot}}\nolimits}

\newtheorem{remark}[theorem]{Remark}
\renewcommand{\theequation}{\thesection.\arabic{equation}}

\usepackage{pgf,tikz}
\usetikzlibrary{arrows}

\newcommand{\TheTitle}{A virtual element method for Darcy's problem coupled with a heat equation}
\newcommand{\ShortTitle}{A VEM for Darcy's problem coupled with a heat equation}
\newcommand{\TheAuthors}{D. Amigo and F. Lepe}

\headers{\ShortTitle}{\TheAuthors}

\title{{\TheTitle}\thanks{DA is partially supported by ANID-Chile through grant ACT210087. FL is partially supported by Universidad del B\'io-B\'io through regular project RE2514703. }}
\author{Danilo Amigo\thanks{Departamento de Matem\'atica, Universidad del B\'io-B\'io, Concepci\'on, Chile.
(\email{danilo.amigo2101@alumnos.ubiobio.cl})}
\and
Felipe Lepe\thanks{Departamento de Matem\'atica, Universidad del B\'io-B\'io, Concepci\'on, Chile.
(\email{flepe@ubiobio.cl}).}
}

\ifpdf
\hypersetup{
  pdftitle={\TheTitle},
  pdfauthor={\TheAuthors}
}
\fi

\date{Draft version of \today.}

\begin{document}

\maketitle

\begin{abstract}
In two dimensions, we develop a virtual element method to solve the Darcy's problem coupled with a nonlinear heat equation. This coupled model may allow for thermal diffusion and viscosity as a function of temperature. Under standard discretization assumptions and appropriate assumptions on the data, we prove the well posedness of the proposed numerical scheme. We also derive optimal error estimates under suitable regularity assumptions for the solution and appropriate assumptions on the data. We conclude with a series of numerical tests performed on different families of meshes that complement the theoretical findings.
\end{abstract}

\begin{keywords}
non-isothermal flows, nonlinear equations, a heat equation, virtual element methods, stability, a priori error bounds.
\end{keywords}

\begin{AMS}
35Q35,          
65N12,       	
65N15,          
65N30.          
\end{AMS}

\section{Introduction}\label{sec:intro}

Let $\O\subset\mathbb{R}^2$ be a simply-connected, bounded and open domain with Lipschitz boundary $\Gamma$. In this work, we are interested in analyzing a divergence-free virtual element method (VEM) for the temperature distribution of a fluid modeled by a convection-diffusion equation coupled with a nonlinear Darcy's problem. This model, which is inspired by works like \cite{RASHAD2014134},  can be described by the following nonlinear system of partial differential equations (PDEs):
\begin{equation}\label{eq:DH}
\left\{\begin{array}{rccc}
\nu(T)\bu+\nabla \textsf{p}  &=&  \bb{f}  \text{ in } \Omega,
\\
\div\bu &=& 0 \,\, \text{in} \,\, \Omega, \\
-\div(\kappa(T)\nabla T)+\bu\cdot\nabla T &=& g \,\,\text{in}\,\, \Omega, \\
\bu\cdot\bn&=& 0 \,\, \text{on} \,\, \Gamma, \\
T&=& 0 \,\, \text{on} \,\, \Gamma.
\end{array}\right.
\end{equation}
The unknowns of  system \eqref{eq:DH} are the velocity field $\bu$, the fluid  pressure $\textsf{p}$ and the temperature $T$ of the fluid. The data are the external force $\boldsymbol{f}$, the external heat source $g$, the viscosity coefficient $\nu(\cdot)$, and the thermal diffusion coefficient $\kappa(\cdot)$. We note that $\nu(\cdot)$ and $\kappa(\cdot)$ are coefficients that can depend nonlinearly on the temperature.

Darcy's law is a well known topic in fluid mechanics, where is possible to describe the motion of certain fluid in porous media. Coupling this law with the heat equations gives additional information of the fluid since its viscosity may change under certain temperatures on the media. This easy but important description is what motivates the study of \eqref{eq:DH}, where different formulations and numerical approximations have been introduced through years. An  important  and crucial study of system \eqref{eq:DH} from a mathematical and numerical point of view is contained in \cite{BernardiDH}. Here, existence, uniqueness, and error estimates by means of a finite element method are proved for a velocity-pressure-temperature formulation. The results of  \cite{BernardiDH} have inspired a number of works. We mention, for instance, spectral methods \cite{MR3523581} and  a recent formulation where Dirac measures are considered as forcing terms for the Darcy's equations which has been analyzed in \cite{MR4840402} and the diffusive term associated to the heat equation depends on the temperature, which is not considered on the original model of \cite{BernardiDH}. Convergence and a posteriori error estimates were derived for this model under the approach of standard finite element method. Other contribution for the numerical analysis of \eqref{eq:DH} is provided by \cite{MR4036533}, where a non-stabilized method is analyzed, providing a priori error estimates in two and three dimensions. Also, \cite{MR4041519} is focused in a posteriori error estimates. These references also are interested in the velocity-pressure-temperature formulation. However, other approaches such as mixed formulations are also available. We mention  on this topic the work \cite{MR4497812} where an additional unknown called the pseudoheat flux is incorporated.

The model problem \eqref{eq:DH} has the same condition on the thermal diffusion  $\kappa(\cdot)$ as in  \cite{MR4840402}, where the diffusive term of the heat equation depends on the temperature, which is not considered on \cite{BernardiDH}. Hence, a first step is to prove that the continuous problem is well posed. Since we are considering regular forcing terms, contrary to  \cite{MR4840402}, the continuous analysis of this reference is  not replicable and must be performed for our configuration. Hence, the results of \cite{BernardiDH} need to be adapted when the diffusion on the heat equation depends on the temperature. Hence, with the Brower's fixed point 
we conclude existence and uniqueness of solution for the continuous setting.
The main goal of our paper is to analyze numerically system \eqref{eq:DH} with the virtual element method \cite{MR2997471}. This method has shown important accuracy on the approximation of partial differential equations (PDEs) in different contexts as is shown in the recent book \cite{MR4510898}, demonstrating that  not only is a robust method of approximation, but also can reduce computational costs when we compare it with other methods available on the literature. Certainly, the research on the application of VEMs is in ongoing progress and for coupled non-linear systems the results are well established. For instance we present the following non-exhaustive list of contributions  \cite{MR4896769,MR4673339,MR4552294,MR4727111,MR4721561,MR4300149,MR4827116} where Navier-Stokes, Brinkman-Forchheimer, and  Boussinesq problems have been addressed with VEMs for different type of formulations. This motivates to continue the research on the capabilities of the VEM for coupled problems, particularly on flow models coupled with temperature. Hence, for \eqref{eq:DH} we propose a VEM method to discretize the velocity-pressure-temperature formulation  of the model, where $\H(\text{div})$-VEM spaces as the one in \cite{BMRR} are required to approximate the velocity on Darcy and standard piecewise polynomials for the corresponding pressure, whereas for the temperature is sufficient to consider the classic VEM spaces for the Laplace operator \cite{MR4567234}. This choice of  discrete spaces provide existence and uniqueness for the discrete coupled problem and optimal error estimates for each of the involved unknowns.

\subsection{Outline}
The paper is structured as follows: We begin in Section \ref{sec:notation_and_preliminaries} by introducing the notation that we will use through our work, namely spaces, norms, and  embeddings. Also, we present some assumptions on the the physical coefficients. Next in Section \ref{sec:DH} we introduce the weak formulation for \eqref{eq:DH} and its mathematical analysis, where existence and uniqueness of solution for the coupled problem is studied. Section \ref{sec:vem} is dedicated to the virtual element discretization of the corresponding variational formulation studied in the previous section. Under standard assumptions on the polygonal meshes we prove the well posedness of the discrete scheme, where existence and uniqueness of discrete solutions is proved. In Section \ref{sec:error} we derive a priori error estimates for the velocity, pressure and temperature of the problem which we contrast with a series of numerical tests reported in Section \ref{sec:numericos}.
\section{Notation and preliminary remarks}
\label{sec:notation_and_preliminaries}
Let us establish the notation and the framework within which we will work.

 \subsection{Notation}
 \label{sec:notation}
 In this paper, $\Omega$ is an open and bounded polygonal domain of $\mathbb{R}^{2}$ with Lipschitz boundary $\partial \Omega$. If $\mathscr{X}$ and $\mathscr{Y}$ are normed vector spaces, we write $\mathscr{X} \hookrightarrow \mathscr{Y}$ to denote that $\mathscr{X}$ is continuously embedded in $\mathscr{Y}$. We denote by $\mathscr{X}'$ and $\|\cdot\|_{\mathscr{X}}$ the dual and the norm of $\mathscr{X}$, respectively.
For $ p \in (1,\infty)$, we denote by $q \in (1,\infty)$ its H\"older \emph{conjugate}, which is such that $1/p + 1/q = 1$. 
We use the standard notation for Lebesgue and Sobolev spaces \cite{MR2424078}. The spaces of vector-valued functions and the vector-valued functions themselves are denoted by bold letters. In particular, we use the following notation: $\H:=\H_0^1(\O)$, $\bV:=\H(\div,\O)$, and 
$$\Q=\L_0^2(\O)\coloneqq \left\lbrace \textsf{q}\in\L^2(\O) :  \int_{\O}\textsf{q}=0\right\rbrace.$$
We also introduce the subspaces of $\bV$ given by                                                                                                                                                                                                                                                                                                                                                                                                                                                                                                                                                                                                                                                                                                                                                                                                                      $\bV_0 \coloneqq  \{ \bv \in \bV : \, \bv\cdot\bn|_{\Gamma} = 0\}$ and $\bZ\coloneqq \{\bv \in \bV_0 : \div\bv = 0\}.$ The spaces $\bV$ and $\bVV$ are endowed, respectively, with the following norms $$\|\bv\|_{\H(\div,\O)} \coloneqq  \left( \|\bv\|_{\L^2(\O)^2}^2+\|\div\bv\|_{\L^2(\O)}^2\right)^{1/2}\quad\forall\bv\in\bV,$$ $$\|S\|_{\H^1(\O)} \coloneqq  \left( \|S\|_{\L^2(\O)}^2 + |S|_{\H^1(\O)}^2\right)^{1/2}\quad\forall S\in\H,$$ where $|S|_{\H^1(\O)}\coloneqq \|\nabla S\|_{\L^2(\O)^2}$ denotes the seminorm of $S\in\H$.

On the other hand, we consider the following Sobolev embedding: for $p\geq 1$, there exists $\textsf{C}_p>0$ such that 
\begin{equation}\label{eq:Poincare}
\| S \|_{\L^p(\O)} \leq \textsf{C}_p| S |_{\H^1(\O)} \qquad \forall S \in \H,
\end{equation}
and we observe that for $p=2$, inequality \eqref{eq:Poincare} is nothing but Poincar\'e's inequality. Finally, we also use the continuous embedding (\cite[Theorem 4.12, Case $\mathrm{A}$]{MR2424078}):
\begin{equation}
\label{eq:embedding}
\W^{1,q}(\O) \hookrightarrow \L^{\infty}(\Omega), \quad \forall q > 2,
\end{equation}
which will be useful in the forthcoming analysis.


\subsection{Data assumptions}
\label{sec:data}

We make the following assumptions on the viscosity $\nu(\cdot)$ and the thermal diffusion coefficient $\kappa(\cdot)$:

\begin{itemize}
\item[\textbf{A0})] $\nu(\cdot)$ is strictly positive, bounded, and Lipschitz continuous, i.e., there exist constants $\nu_{*}, \nu^{*}, \nu_{\text{lip}} > 0$ such that
\begin{equation*}
\begin{split}
0<\nu_* \leq \nu(\textsf{x}) \leq \nu^*,
\qquad
|\nu(\textsf{y}) - \nu(\textsf{z})| \leq \nu_{\text{lip}}|\textsf{y} - \textsf{z}|
\qquad \forall \textsf{x},\textsf{y},\textsf{z} \in \mathbb{R}.
\end{split}
\end{equation*}
\item[\textbf{A1})] $\kappa(\cdot)$ is strictly positive, bounded, and Lipschitz continuous, i.e., there exist constants $\kappa_{*}, \kappa^{*}, \kappa_{\text{lip}} > 0$ such that
\begin{equation*}
0 < \kappa_* \leq \kappa(\textsf{x}) \leq \kappa^*,
\qquad 
|\kappa(\textsf{y}) - \kappa(\textsf{z})| \leq \kappa_{\text{lip}}|\textsf{y} - \textsf{z}|
\qquad \forall \textsf{x},\textsf{y},\textsf{z} \in \mathbb{R}.
\end{equation*}
\end{itemize} 

\section{Weak formulation}\label{sec:DH}
We now introduce a weak formulation for  system \eqref{eq:DH} and review existence and uniqueness results. First, it is important to observe that, as is presented in \cite[Section 2.2]{BernardiDH}, since $\bu\in\L^2(\O)^2$, we only have $\bu\cdot\nabla T\in\L^1(\O)$. Therefore, we need to test the nonlinear term by an $\L^\infty$ function. Since we need to find the temperature in the space $\H$, to simplify the analysis we just choose the space $\H\cap\L^{\infty}(\O)$ for the test functions. Under these considerations, the weak formulation of \eqref{eq:DH} reads as follows: Given $\bb{f}\in\L^2(\O)^2$ and $g\in\L^{2}(\O)$, find $(\bu,\textsf{p},T)\in\bV_0\times\Q\times\H$ such that
\begin{equation}\label{eq:DHW}
\left\{\begin{array}{rll}
\displaystyle \int_{\O}\nu(T)\bu\cdot\bv-\int_{\O}\textsf{p}\div\bv&=\displaystyle \int_\O\bb{f}\cdot\bv,\quad \forall\bv\in\bV_0, \\
\displaystyle \int_{\O}\textsf{q}\div\bu&=0,\quad &\forall \textsf{q}\in\Q\\
\displaystyle \int_\O \kappa(T)\nabla T\cdot\nabla S + \int_\O (\bu\cdot\nabla T)S &= \displaystyle \int_{\O}gS,\quad &\forall S\in\H. \\
\end{array}\right.
\end{equation}

To perform the analysis of \eqref{eq:DHW}, we define the following forms 
\begin{equation}\label{eq:bilinear_form_a}
a: \H\times\bV_0\times\bV_0 \rightarrow \mathbb{R}, \quad a(T;\bu,\bv)\coloneqq \displaystyle \int_{\O}\nu(T)\bu\cdot\bv,
\end{equation} 
\begin{equation}\label{eq:bilinear_form_b}
b:\bV_0\times\Q \rightarrow \mathbb{R}, \quad b(\bv,\textsf{q})\coloneqq -\int_{\O}\textsf{q}\div\bv,
\end{equation} 
\begin{equation}\label{eq:bilinear_form_fraka}
\mathfrak{a} : \H\times\H\times\H \rightarrow \mathbb{R}, \quad \mathfrak{a}(R;T,Z)\coloneqq \int_\O \kappa(R)\nabla T \cdot \nabla Z,
\end{equation} 
\begin{equation}\label{eq:bilinear_form_frakc}
\mathfrak{c}:\bV_0\times\H\times(\H\cap\L^{\infty}(\O)) \rightarrow \mathbb{R}, \quad \mathfrak{c}(\bu;T,S)\coloneqq \int_\O (\bu\cdot\nabla T)S.
\end{equation}
On the other hand, we define the functionals $$F : \bV_0 \rightarrow \mathbb{R}, \quad F(\bv)\coloneqq  \int_\O \bb{f}\cdot\bv,\quad\forall\bv\in\bV_0$$ $$G:\H\cap\L^{\infty}(\O), \quad G(S)\coloneqq \int_{\O}gS,\quad\forall S\in\H.$$

With these forms and functionals at hand, we rewrite system \eqref{eq:DHW} as follows: Find $(\bu,\textsf{p},T)\in\bV_0\times\Q\times\H$ such that 
\begin{equation}\label{eq:DHW2}
\left\{\begin{array}{rll}
a(T;\bu,\bv)+b(\bv,\textsf{p})&=\displaystyle F(\bv), &\quad \forall \bv \in \bV_0, \\
b(\bu,\textsf{q})&=0, &\quad \forall \textsf{q} \in \Q, \\
\mathfrak{a}(T;T,S) + \mathfrak{c}(\bu;T,S) &= G(S), &\quad \forall S\in \H\cap\L^{\infty}(\O). \\
\end{array}\right.
\end{equation}
Let us notice that  $a(\cdot;\cdot,\cdot)$, $b(\cdot,\cdot)$, $\mathfrak{a}(\cdot;\cdot,\cdot)$ and $\mathfrak{c}(\cdot;\cdot,\cdot)$ satisfies the following stability properties: 
\begin{itemize}[leftmargin=*]
\item For any $X \in \H$, $a(X;\cdot,\cdot)$ is a coercive and continuous bilinear form: for every $\bv,\bw \in \bV$ we have
\begin{equation*}\label{eq:a_bound}
a(X;\bv,\bw) \leq \nu^*\|\bv\|_{\L^2(\O)^2}\|\bw\|_{\L^2(\O)^2},
\quad
a(X;\bv,\bv) \geq \nu_{*}\|\bv\|_{\L^2(\O)^2}^2.
\end{equation*}
\item For any $X \in \H$, $\mathfrak{a}(X;\cdot,\cdot)$ is a coercive and continuous bilinear form: for every $R, S \in \H$, we have
\begin{equation*}\label{eq:atemp_bound}
\mathfrak{a}(X;R,S) \leq \kappa^*|R|_{\H^1(\O)}|S|_{\H^1(\O)},
\quad
\mathfrak{a}(X;S,S) \geq \kappa_{*}|S|_{\H^1(\O)}^2.
\end{equation*}
\item $\mathfrak{c}(\cdot;\cdot,\cdot)$ is continuous: For every $\bv \in \bV$, $R \in \H$ and $S \in \H\cap\L^\infty(\O)$, we have
\begin{equation*}\label{eq:ctemp_bound}
\mathfrak{c}(\bv;R,S) \leq \|\bv\|_{\L^2(\O)^2} |R|_{\H^1(\O)}\|S\|_{\L^{\infty}(\O)},
\end{equation*}
\item $b(\cdot,\cdot)$ is continuous: For every $\textsf{q} \in \Q$ and $\bv \in \bV$, we have
\begin{equation*}\label{eq:b_bound}
b(\bv,\textsf{q}) \leq \|\bv\|_{\H(\div,\O)}\|\textsf{q}\|_{\L^2(\O)}.
\end{equation*}
Moreover, $b(\cdot,\cdot)$ satisfies an inf-sup condition: There exists a constant $\beta>0$ such that
\begin{equation}\label{eq:infsupcontinua}
\underset{\bv \in \bV}{\sup} \; \dfrac{b(\bv,\textsf{q})}{\|\bv\|_{\H(\div,\O)}} \geq \beta \|\textsf{q}\|_{\L^2(\O)}
\quad
\forall \textsf{q} \in \Q.
\end{equation} 
\end{itemize}

\subsection{Reduced formulation} As is presented in \cite[Section 2.2]{BernardiDH}, an equivalent formulation can be obtained for the nonlinear Darcy-heat system \eqref{eq:DHW2},  where the velocity field $\bu$ and the pressure $\textsf{p}$ of the fluid are functions of the temperature $T$, i.e, $\bu=\bu(T)$ and $\textsf{p}=\textsf{p}(T)$. This formulation is as follows: Find $T\in\H$ such that 
\begin{equation}\label{eq:reducedHeat}
\mathfrak{a}(T;T,S)+\mathfrak{c}(\bu(T);T,S)=G(S), \quad \forall S\in\H\cap\L^{\infty}(\O),
\end{equation}
where $\bu(T)$ is the solution of: Find $(\bu(T),\textsf{p}(T))\in\bV_0\times\Q$ such that 
\begin{equation}\label{eq:reducedDarcy}
\left\{\begin{array}{rll}
a(T;\bu(T),\bv)+b(\bv,\textsf{p}(T))&=\displaystyle F(\bv), \quad &\forall \bv\in\bV_0 \\
b(\bu(T),\textsf{q})&=0, \quad &\forall \textsf{q}\in\Q,
\end{array}\right.
\end{equation}
which is nothing else but the Darcy problem.
\subsection{Existence, stability, and uniqueness results}
Inspired by \cite{BernardiDH}, in this section  we present some important results    for existence, uniqueness and stability of solutions for the nonlinear system \eqref{eq:DHW2}.

We observe that testing by $\bv=\bu(T)$ the first equation of \eqref{eq:reducedDarcy} and using the second equation of it, and invoking the inf-sup condition \eqref{eq:infsupcontinua}, we obtain the following stability bounds \cite[equation (2.22)]{BernardiDH}):
\begin{equation}\label{eq:stabilityDarcy}
\begin{split}
\|\bu(T)\|_{\L^2(\O)^2}&\leq\dfrac{1}{\nu_*}\|\bb{f}\|_{\L^2(\O)^2}, \quad \|\sqrt{\nu(T)}\bu(T)\|_{\L^2(\O)^2}\leq\dfrac{1}{\sqrt{\nu_*}}\|\bb{f}\|_{\L^2(\O)^2}, \\
\|\textsf{p}(T)\|_{\L^2(\O)} &\leq \dfrac{1}{\beta}\left(1 + \dfrac{\nu^*}{\nu_*}\right)\|\bb{f}\|_{\L^2(\O)^2}.
\end{split}
\end{equation}

With the bounds on  \eqref{eq:stabilityDarcy} at hand, it is possible to prove the following convergence result (see \cite[Lemma 2.1]{BernardiDH}).
\begin{proposition}
\label{pro:existence3}
Let $\nu(\cdot)$ and $\kappa(\cdot)$ satisfy \textbf{A0)} and \textbf{A1)}, respectively. Let $\{T_k\}_{k\in\mathbb{N}} \subset \L^2(\Omega)$ be a sequence of functions that converges strongly to $T \in \L^2(\Omega)$. Then the sequence $\{(\mathbf{u}(T_k), \textsf{p}(T_k))\}_{k\in\mathbb{N}} \subset \mathbf{V}_0 \times \Q$ converges weakly to $(\mathbf{u}(T), \textsf{p}(T)) \in \mathbf{V}_0 \times \Q$ and 
\begin{equation*}
\begin{split}
\lim_{k\rightarrow+\infty} \sqrt{\nu(T_k)}\bu(T_k) &= \sqrt{\nu(T)}\bu(T) \;\; \text{strongly in} \;\; \L^2(\O)^2, \\
\lim_{k\rightarrow+\infty} \textsf{p}(T_k) &= \textsf{p}(T) \;\; \text{strongly in} \;\; \L^2(\Omega).
\end{split}
\end{equation*}
\end{proposition}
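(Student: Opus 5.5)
The plan is a compactness argument combined with an energy identity, following the structure of \cite[Lemma 2.1]{BernardiDH}. Write $\bu_k:=\bu(T_k)$ and $\textsf{p}_k:=\textsf{p}(T_k)$.

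\textbf{Weak convergence.} By \eqref{eq:stabilityDarcy}, $\{\bu_k\}$ is bounded in $\L^2(\O)^2$ and $\{\textsf{p}_k\}$ is bounded in $\L^2(\O)$. The second equation of \eqref{eq:reducedDarcy} gives $\div\bu_k=0$, so $\{\bu_k\}$ is bounded in $\bV_0$. We may therefore extract a subsequence with $\bu_k\rightharpoonup\bu^\star$ weakly in $\bV_0$ (the subspace is weakly closed) and $\textsf{p}_k\rightharpoonup\textsf{p}^\star$ weakly in $\Q$. Passing to a further subsequence, $T_k\to T$ a.e.\ in $\O$. Since $\nu$ is continuous and bounded by \textbf{A0)}, dominated convergence shows $\nu(T_k)\bv\to\nu(T)\bv$ strongly in $\L^2(\O)^2$ for each fixed $\bv\in\bV_0$. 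Then
\[
a(T_k;\bu_k,\bv)=\int_\O \bu_k\cdot\nu(T_k)\bv\longrightarrow\int_\O\bu^\star\cdot\nu(T)\bv,
\]
being the pairing of a weakly convergent sequence with a strongly convergent one. The terms $b(\bv,\textsf{p}_k)$ and $b(\bu_k,\textsf{q})$ are linear and pass to the limit directly. Hence $(\bu^\star,\textsf{p}^\star)$ solves \eqref{eq:reducedDarcy} with datum $T$. That problem is uniquely solvable, by coercivity of $a(T;\cdot,\cdot)$ on $\bZ$ together with \eqref{eq:infsupcontinua}, so $(\bu^\star,\textsf{p}^\star)=(\bu(T),\textsf{p}(T))$. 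Because the limit is unique, the whole sequence converges weakly, not just a subsequence.

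\textbf{Strong convergence of $\sqrt{\nu(T_k)}\bu_k$.} Testing with $\bv=\bu_k$ and using $\div\bu_k=0$ gives $\|\sqrt{\nu(T_k)}\bu_k\|_{\L^2}^2=F(\bu_k)\to F(\bu(T))=\|\sqrt{\nu(T)}\bu(T)\|_{\L^2}^2$. Next, $\sqrt{\nu(T_k)}\bu_k\rightharpoonup\sqrt{\nu(T)}\bu(T)$ weakly in $\L^2$. To see this, pair with a test function $\boldsymbol{\phi}\in\L^2(\O)^2$: the product $\sqrt{\nu(T_k)}\boldsymbol{\phi}$ converges strongly by dominated convergence (along the a.e.\ subsequence, and again by uniqueness of the limit for the whole sequence). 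Weak convergence together with convergence of norms yields strong convergence in the Hilbert space $\L^2(\O)^2$.

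\textbf{Strong convergence of the pressure.} By \eqref{eq:infsupcontinua},
\[
\beta\|\textsf{p}_k-\textsf{p}(T)\|_{\L^2}\le\sup_{\bv}\frac{b(\bv,\textsf{p}_k-\textsf{p}(T))}{\|\bv\|_{\H(\div,\O)}}=\sup_{\bv}\frac{a(T;\bu(T),\bv)-a(T_k;\bu_k,\bv)}{\|\bv\|_{\H(\div,\O)}}.
\]
The numerator equals $\int_\O\bigl(\sqrt{\nu(T)}\,\sqrt{\nu(T)}\bu(T)-\sqrt{\nu(T_k)}\,\sqrt{\nu(T_k)}\bu_k\bigr)\cdot\bv$. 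This is the main obstacle: the supremum requires a bound uniform in $\bv$, and we only know $\nu(T_k)\to\nu(T)$ a.e., not in $\L^\infty$. I would split the numerator as
\[
\int_\O\sqrt{\nu(T_k)}\bigl(\sqrt{\nu(T)}\bu(T)-\sqrt{\nu(T_k)}\bu_k\bigr)\cdot\bv+\int_\O\bigl(\sqrt{\nu(T)}-\sqrt{\nu(T_k)}\bigr)\sqrt{\nu(T)}\bu(T)\cdot\bv.
\]
The first term is bounded by $\sqrt{\nu^*}\,\|\sqrt{\nu(T)}\bu(T)-\sqrt{\nu(T_k)}\bu_k\|_{\L^2}\|\bv\|_{\L^2}$, which tends to zero by the previous step. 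In the second term, $\bv$ appears but the coefficient difference is not uniformly small. The way out is to move the difference onto the fixed function: bound it by $\|(\sqrt{\nu(T)}-\sqrt{\nu(T_k)})\sqrt{\nu(T)}\bu(T)\|_{\L^2}\|\bv\|_{\L^2}$. The first factor involves only the fixed function $\bu(T)$ and tends to zero by dominated convergence, since the integrand is dominated by $4\nu^*|\bu(T)|^2$ and converges a.e. The bound is then uniform in $\bv$, which gives $\textsf{p}_k\to\textsf{p}(T)$ strongly; as before, the full sequence converges by uniqueness of the limit.
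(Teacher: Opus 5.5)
Your proposal is correct and is essentially the argument of \cite[Lemma 2.1]{BernardiDH}, to which the paper defers without further proof. It has the same steps: uniform bounds from \eqref{eq:stabilityDarcy}; identification of the weak limit through a.e.\ convergence, dominated convergence and uniqueness of the Darcy solution; strong convergence of $\sqrt{\nu(T_k)}\bu(T_k)$ from the energy identity $\|\sqrt{\nu(T_k)}\bu(T_k)\|_{\L^2(\O)^2}^2=F(\bu(T_k))$ together with weak convergence; and the pressure obtained from the first equation, which you correctly adapt to the present $\H(\div)$--$\L^2_0$ setting via the inf-sup condition.

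Two cosmetic remarks. In the pressure step the supremum must run over $\bV_0$, where the inf-sup condition also holds for $\Q=\L_0^2(\O)$, because the first equation of \eqref{eq:reducedDarcy} is only available for $\bv\in\bV_0$, not for all of $\bV$ as written in \eqref{eq:infsupcontinua}. Also, the integrable majorant in the last step should be $(\nu^*)^2|\bu(T)|^2$ rather than $4\nu^*|\bu(T)|^2$.
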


Next, we present an argument to show the existence of weak solutions of system \eqref{eq:reducedHeat} based in a separability argument. We refer to \cite[Section 2.3]{BernardiDH} for further details. 

Let $q>2$ and consider the space $\W_0^{1,q}(\O)\coloneqq \{S\in\W^{1,q}(\O) : S|_{\Gamma}=0\}$.
The argument that we propose is based in a discretization of \eqref{eq:reducedHeat} for which we prove existence of solutions. First, since $\W_0^{1,q}(\O)$ is separable,  it has a countable basis which we denote by $\Theta\coloneqq \{\theta_n\}_{n\in\mathbb{N}}$. Let $\Theta_k$ be the finite-dimensional space spanned by the basis functions $\{\theta_1,\ldots,\theta_k\}$. A discrete formulation of \eqref{eq:reducedHeat} reads as follows: find $T_k\in\Theta_k$ such that for all $j \in \{1,\ldots,k\}$
\begin{equation}\label{eq:Heatseparable}
\int_\O \kappa(T_k)\nabla T_k\cdot\nabla \theta_j + \int_\O (\bu(T_k)\cdot\nabla T_k)\theta_j = \int_\O g\theta_j,
\end{equation}
where $(\bu(T_k),\textsf{p}(T_k))$ solves \eqref{eq:reducedDarcy} with $T=T_k$. Thanks to the continuous embedding \eqref{eq:embedding}, the nonlinear term is well defined. Now, consider the operator $\mathcal{A}:\Theta_k\rightarrow\Theta_k$ such that for every $T_k \in \Theta_k$, $\mathcal{A}(T_k)$ is the solution of the following problem: Find $\mathcal{A}(T_k)\in\Theta_k$ such that 
\begin{equation}\label{eq:OperatorA}
\int_\O \nabla \mathcal{A}(T_k)\cdot\nabla S_k = \int_\O \kappa(T_k)\nabla T_k\cdot\nabla S_k + \int_\O (\bu(T_k)\cdot\nabla T_k)S_k - \int_\O gS_k,
\end{equation}
for all $S_k\in\Theta_k$. It is easy to check that problem \eqref{eq:OperatorA} is well posed, and therefore $\mathcal{A}$ is well defined. On the other hand, since $\Theta_k$ is finite dimensional, and invoking bounds \eqref{eq:stabilityDarcy}, we obtain the continuity of $\mathcal{A}$. Now, using integration by parts we observe that $$\int_\O (\bu(T_k)\cdot\nabla T_k)T_k = 0 \quad \forall T_k\in\Theta_k,$$ and therefore 
\begin{equation*}
\begin{split}
\int_\O \nabla \mathcal{A}(T_k)\cdot\nabla T_k &= \int_\O \kappa(T_k)|\nabla T_k|^2 - \int_\O gT_k \\
&\geq \kappa_*|T_k|_{\H^1(\O)}^2 - \|g\|_{\L^2(\O)}\|T_k\|_{\L^2(\O)} \\
&\geq \kappa_*|T_k|_{\H^1(\O)}^2 - \textsf{C}_2\|g\|_{\L^2(\O)}|T_k|_{\H^1(\O)}.
\end{split}
\end{equation*}
Then, we obtain for every $T_k\in\Theta_{\text{Ex}}\coloneqq \left\lbrace T_k\in\Theta_k : |T_k|_{\H^1(\O)} = \dfrac{\textsf{C}_2}{\kappa_*}\|g\|_{\L^2(\O)}\right\rbrace$ we have 
$$\int_\O \nabla \mathcal{A}(T_k)\cdot\nabla T_k \geq 0,\qquad k\in\mathbb{N}.$$  
Therefore, invoking Brouwer's fixed point Theorem (\cite[Chap. IV, Corollary 1.1]{MR851383}) we obtain that problem \eqref{eq:Heatseparable} has at least one solution $T_k\in\Theta_k$ which satisfies the stability bound 
\begin{equation}\label{eq:cotaTunif}
|T_k|_{\H^1(\O)} \leq \dfrac{\textsf{C}_2}{\kappa_*}\|g\|_{\L^2(\O)}. 
\end{equation}

This result allows us to show existence of solutions of equation \eqref{eq:reducedHeat} using the density of $\Theta\subset \W_0^{1,q}(\O)$, as is stated in the following theorem.
\begin{theorem}\label{teo:existencia}
Assume that $\nu(\cdot)$ and $\kappa(\cdot)$ satisfy properties \textbf{A0}) and \textbf{A1}), respectively. Then, for any $\bb{f}\in\L^2(\O)^2$ and $g\in\L^2(\O)$, problem \eqref{eq:reducedHeat} has at least one solution $T\in\H$.
\end{theorem}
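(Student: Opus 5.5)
The plan is the standard Galerkin limit argument of \cite[Section 2.3]{BernardiDH}, applied to the sequence $\{T_k\}_{k\in\mathbb{N}}$ of solutions of \eqref{eq:Heatseparable} built above.

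\textbf{Compactness.} By \eqref{eq:cotaTunif}, $\{T_k\}$ is bounded in $\H$ uniformly in $k$. Hence there is a subsequence (not relabelled) and some $T\in\H$ with $T_k\rightharpoonup T$ weakly in $\H$. By Rellich's theorem, $T_k\to T$ strongly in $\L^2(\O)$, and after passing to a further subsequence, $T_k\to T$ almost everywhere in $\O$. Proposition~\ref{pro:existence3} then gives $\bu(T_k)\rightharpoonup\bu(T)$ weakly in $\bV_0$, together with strong convergence of $\sqrt{\nu(T_k)}\bu(T_k)$.

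\textbf{Strong convergence of the velocities.} Since $\nu$ is continuous and bounded and $T_k\to T$ a.e., dominated convergence gives $\nu(T_k)^{-1/2}\bw\to\nu(T)^{-1/2}\bw$ strongly in $\L^2(\O)^2$ for every fixed $\bw$. Write
\[
\bu(T_k)=\nu(T_k)^{-1/2}\sqrt{\nu(T_k)}\bu(T_k),
\]
and split
\[
\nu(T_k)^{-1/2}\sqrt{\nu(T_k)}\bu(T_k)-\nu(T)^{-1/2}\sqrt{\nu(T)}\bu(T)
=\nu(T_k)^{-1/2}\bigl(\sqrt{\nu(T_k)}\bu(T_k)-\sqrt{\nu(T)}\bu(T)\bigr)+\bigl(\nu(T_k)^{-1/2}-\nu(T)^{-1/2}\bigr)\sqrt{\nu(T)}\bu(T).
\]
The first term tends to zero because $\nu(T_k)^{-1/2}\le\nu_*^{-1/2}$ and by Proposition~\ref{pro:existence3}. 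The second tends to zero by dominated convergence. Hence $\bu(T_k)\to\bu(T)$ strongly in $\L^2(\O)^2$.

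\textbf{Passage to the limit.} Fix $j$ and take $k\ge j$ in \eqref{eq:Heatseparable}.
\begin{itemize}
\item \emph{Right-hand side.} It is independent of $k$.
\item \emph{Diffusion term.} $\kappa(T_k)\nabla\theta_j\to\kappa(T)\nabla\theta_j$ strongly in $\L^2(\O)^2$ by dominated convergence, since $\kappa$ is bounded and continuous and $\nabla\theta_j\in\L^q(\O)^2\subset\L^2(\O)^2$. Combined with $\nabla T_k\rightharpoonup\nabla T$ weakly in $\L^2(\O)^2$, this gives $\int_\O\kappa(T_k)\nabla T_k\cdot\nabla\theta_j\to\int_\O\kappa(T)\nabla T\cdot\nabla\theta_j$.
\item \emph{Convective term.} By \eqref{eq:embedding}, $\theta_j\in\L^\infty(\O)$, so $\bu(T_k)\theta_j\to\bu(T)\theta_j$ strongly in $\L^2(\O)^2$. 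Paired with the weak convergence of $\nabla T_k$, this gives convergence of $\int_\O(\bu(T_k)\cdot\nabla T_k)\theta_j$.
\end{itemize}
Thus $\mathfrak{a}(T;T,\theta_j)+\mathfrak{c}(\bu(T);T,\theta_j)=G(\theta_j)$ for every $j$. By linearity it holds on $\mathrm{span}\,\Theta$, which is dense in $\W_0^{1,q}(\O)$.

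\textbf{Extension to all test functions, and the main obstacle.} For fixed $T$, the map $S\mapsto\mathfrak{a}(T;T,S)+\mathfrak{c}(\bu(T);T,S)-G(S)$ is continuous on $\W_0^{1,q}(\O)$. Indeed, $\mathfrak{c}$ is controlled by $\|S\|_{\L^\infty(\O)}\le C\|S\|_{\W^{1,q}(\O)}$ by \eqref{eq:embedding}, and the other terms by the $\H^1$ norm. So the identity holds for all $S\in\W_0^{1,q}(\O)$.

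The main obstacle is reaching every $S\in\H\cap\L^\infty(\O)$, as \eqref{eq:reducedHeat} requires. The term $\mathfrak{c}$ is not continuous in the $\H^1$ norm, and $\W_0^{1,q}$ is not dense in $\H\cap\L^\infty(\O)$ for the $\H^1\cap\L^\infty$ norm. I would regularize: take $S_n\in\mathcal{D}(\O)$ with $S_n\to S$ in $\H$, $\|S_n\|_{\L^\infty(\O)}\le C\|S\|_{\L^\infty(\O)}$, and $S_n\to S$ a.e. Such a sequence is obtained by truncating to the range of $S$ and mollifying. Then the $\mathfrak{a}$ and $G$ terms pass to the limit by $\H^1$ convergence. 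The convective term passes to the limit by dominated convergence, since the integrand is dominated by $C\|S\|_{\L^\infty(\O)}|\bu(T)||\nabla T|\in\L^1(\O)$. This yields \eqref{eq:reducedHeat} and completes the argument.
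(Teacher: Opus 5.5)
Your proof is correct. It shares the paper's Galerkin--compactness skeleton (the uniform bound \eqref{eq:cotaTunif}, a weak limit $T$, Proposition~\ref{pro:existence3}, passage to the limit for fixed $\theta_j$, then density), but it handles two steps differently.

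\emph{Convective term.} The paper integrates by parts at the level of $T_k$, writing $\int_\O(\bu(T_k)\cdot\nabla T_k)\theta_j=-\int_\O(\bu(T_k)\cdot\nabla\theta_j)T_k$. That way it needs only weak convergence of $\bu(T_k)$, paired with strong convergence of $T_k\nabla\theta_j$ in $\L^2(\O)^2$. You instead upgrade Proposition~\ref{pro:existence3} to strong $\L^2$ convergence of $\bu(T_k)$ itself, using continuity of $\nu$, the bound $\nu\ge\nu_*$ and a.e.\ convergence. You then pair $\bu(T_k)\theta_j$ (strongly convergent) against $\nabla T_k$ (weakly convergent).

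\emph{Final extension.} The paper's choice has a cost: its limit identity comes out in the integrated-by-parts form $\int_\O\kappa(T)\nabla T\cdot\nabla S-\int_\O(\bu(T)\cdot\nabla S)T=\int_\O gS$. This is not a priori meaningful for general $S\in\H\cap\L^\infty(\O)$, since $T\notin\L^\infty(\O)$ in general. The paper therefore returns to \eqref{eq:reducedHeat} through the distributional equation, the observation $\bu(T)\cdot\nabla T\in\H^{-1}(\O)$, and an appeal to \cite{BernardiDH} for the duality with $S\in\H\cap\L^\infty(\O)$. You keep the original form of $\mathfrak{c}$ throughout. So your extension needs only continuity on $\W_0^{1,q}(\O)$ plus the truncation--mollification approximation with dominated convergence. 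That approximation is exactly what justifies identifying the $\H^{-1}$ pairing with $\int_\O(\bu(T)\cdot\nabla T)S$ in the paper's last step.

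Two smaller points also favour your version. Your dominated-convergence treatment of $\kappa(T_k)\nabla\theta_j$ makes explicit the diffusion limit that the paper delegates to \cite[Lemma 2.1]{BernardiDH}. You also correctly extract subsequences where the paper speaks of convergence of the whole sequence.

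In short, your version is more self-contained, while the paper's asks less of the velocity sequence, since weak convergence suffices there.
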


\begin{proof}
From \eqref{eq:cotaTunif} we observe that sequence $\{T_k\}_{k\in\mathbb{N}}\subset\H$ converges weakly to some function $T\in\H$, and from the continuous embedding \eqref{eq:Poincare} we obtain that $\{T_k\}_{k\in\mathbb{N}}$ converges strongly to $T$ in $\L^p(\O)$ for $p\geq 1$. Then, invoking Proposition \ref{pro:existence3}, we obtain that the sequence $\{(\bu(T_k),\textsf{p}(T_k))\}_{k\in\mathbb{N}}\subset \bV_0\times\Q$ converges weakly to $(\bu(T),\textsf{p}(T))\in\bV_0\times\Q$, $\{\sqrt{\nu(T_k)}\bu(T_k)\}_{k\in\mathbb{N}}$ converges strongly to $\sqrt{\nu(T)}\bu(T)\in\L^2(\O)^2$ and $\{\textsf{p}(T_k)\}_{k\in\mathbb{N}}$ converges strongly to $\textsf{p}(T)\in\L^2(\O)$. 

Now, for a fixed index $j$ in \eqref{eq:Heatseparable}, an integration by parts yields to
\begin{equation*}
\int_{\O}(\bu(T_k)\cdot\nabla T_k)\theta_j = -\int_\O (\bu(T_k)\cdot\nabla\theta_j)T_k.
\end{equation*} 
On the other hand, since $\nabla\theta_j \in \L^q(\O)$ for $q>2$, the strong convergence of $\{T_k\}_{k\in\mathbb{N}}$ to $T\in\L^p(\O)$ for $p\geq1$ and H\"{o}lder's inequality, implies that $\{T_k\nabla\theta_j\}_{k\in\mathbb{N}}$ converges strongly to $T\nabla\theta_j\in\L^2(\O)^2$. This strong convergence, combined with the weak convergence of $\{\bu(T_k)\}_{k\in\mathbb{N}}$ to $\bu(T)\in\L^2(\O)^2$ implies that 
\begin{equation*}
\lim_{k\to+\infty} \int_{\O} (\bu(T_k)\cdot\nabla T_k)\theta_j = - \lim_{k\to+\infty} \int_{\O} (\bu(T_k)\cdot\nabla \theta_j)T_k = -\int_{\O} (\bu(T)\cdot\nabla \theta_j)T.
\end{equation*}
On the other hand, following the proof of \cite[Lemma 2.1]{BernardiDH}, it is possible to show that $\{\kappa(T_k)\nabla T_k\}_{k\in\mathbb{N}}$ converges weakly to $\kappa(T)\nabla T \in \L^2(\O)^2$. Thus we obtain
\begin{equation*}
\lim_{k\to+\infty} \int_{\O}\kappa(T_k)\nabla T_k\cdot\nabla\theta_j = \int_\O \kappa(T)\nabla T\cdot\nabla\theta_j.
\end{equation*}
Then, passing to the limit in \eqref{eq:Heatseparable}, for each $j\in\mathbb{N}$ we obtain
\begin{equation*}
\int_\O \kappa(T)\nabla T\cdot\nabla\theta_j - \int_{\O} (\bu(T)\cdot\nabla \theta_j)T = \int_\O g\theta_j.
\end{equation*}
Now, using the density of the set $\Theta\subset\W_0^{1,q}(\O)$ we have
\begin{equation*}
\int_\O \kappa(T)\nabla T\cdot\nabla S - \int_{\O} (\bu(T)\cdot\nabla S)T = \int_\O gS \quad \forall S\in\W_0^{1,q}(\O),
\end{equation*}
implying that in the distributional sense,  we have the following equivalency:
\begin{equation*}
-\div(\kappa(T)\nabla T) + \div(\bu(T)T) = g\, \Longleftrightarrow  -\div(\kappa(T)\nabla T) + \bu(T)\cdot\nabla T = g.
\end{equation*}
In particular, we observe that $\bu(T)\cdot\nabla T \in \H'\coloneqq \H^{-1}(\O)$. Then, taking duality with $S\in\H\cap\L^{\infty}(\O)$ we obtain (\cite{BernardiDH}):
\begin{equation*}
\int_\O \kappa(T)\nabla T\cdot\nabla S + \int_\O  (\bu(T)\cdot\nabla T)S = \int_\O gS \quad \forall S\in\H,
\end{equation*}
This concludes the proof.
\end{proof}

The following task is to prove uniqueness of solutions of \eqref{eq:DHW}. 
First, given $\bz \in \bV_0$, let us consider the problem: Find $T\in\H$ such that 
\begin{equation}\label{eq:Calordivfree}
\mathfrak{a}(T;T,S) + \mathfrak{c}(\bz;T,S) = G(S), \quad \forall S\in\H\cap\L^{\infty}(\O).
\end{equation}

As is presented in \cite[Section 2.4]{BernardiDH}, a natural choice for the test function is $S=T$, but this is  not possible since $T$ not necessarily belongs to $\L^{\infty}(\O)$. We observe that the existence of solutions of \eqref{eq:Calordivfree} is proved by using similar arguments that those in Theorem \ref{teo:existencia}.
Next, given $\ell > 0$, we define the function $\tau_\ell : \mathbb{R} \rightarrow \mathbb{R}$ where for $t\in\mathbb{R}$, $\tau_\ell(t)$ is given by
$$\tau_\ell(t)=\left\{\begin{array}{lcc} \displaystyle t & \text{if} & |t|\leq\ell, \\ \ell\,\text{sgn}(t) &\text{if}& |t|>\ell. \end{array} \right.$$
It is easy to check that $\tau_\ell$ is continuous on $\mathbb{R}$ and therefore integrable. Let $\sigma_\ell$ its primitive, given by
$$\sigma_{\ell}(t) = \int_0^t \tau_\ell(s)ds.$$

Note that $\tau_\ell \in \W^{1,\infty}(\mathbb{R})$ and for any $S\in\H$, $\tau_{\ell}(S)\in\H$ and almost everywhere in $\O$ we have
$$\nabla\tau_\ell(S)=\left\{\begin{array}{lcc} \displaystyle \nabla S & \text{if} & |S|\leq\ell, \\ 0 &\text{if}& |S|>\ell. \end{array} \right.$$

On the other hand, $\sigma_{\ell}$ is Lipschitz continuous, piecewise $\mathcal{C}^1(\mathbb{R})$, and for any $S\in\H$, $\sigma_{\ell}(S)\in\H$. With these functions at hand, we establish uniqueness of solutions of \eqref{eq:Calordivfree}. 

\begin{theorem}\label{teo:caloruniq}
Let $\kappa(\cdot)$ satisfying assumption \textbf{A1}) and let $\bz\in\bZ$. Assume that \eqref{eq:Calordivfree} has a solution $T_1\in\H$ such that $T_1\in\W^{1,\infty}(\O)$ and
\begin{equation}\label{eq:smalldataTemp}
\kappa_*^{-1}\kappa_{\text{lip}}\textsf{C}_2\|\nabla T_1\|_{\L^{\infty}(\O)^2} < 1.
\end{equation} 
Then, $T_1$ is  unique and satisfies
\begin{equation}\label{eq:cotatemp}
|T_1|_{\H^1(\O)} \leq \dfrac{\textsf{C}_2}{\kappa_*}\|g\|_{\L^2(\O)}.
\end{equation}
\end{theorem}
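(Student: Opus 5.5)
We will argue as in \cite[Section 2.4]{BernardiDH}, using the truncations $\tau_\ell$ and $\sigma_\ell$ as admissible test functions. They lie in $\H\cap\L^\infty(\O)$, which is exactly the test space of \eqref{eq:Calordivfree}.

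\emph{Stability bound \eqref{eq:cotatemp}.} We test \eqref{eq:Calordivfree} with $S=\tau_\ell(T_1)\in\H\cap\L^\infty(\O)$. The diffusive term gives
\[
\int_\O\kappa(T_1)\nabla T_1\cdot\nabla\tau_\ell(T_1)=\int_{\{|T_1|\le\ell\}}\kappa(T_1)|\nabla T_1|^2\ge\kappa_*|\tau_\ell(T_1)|_{\H^1(\O)}^2 .
\]
For the convective term we note that $(\nabla T_1)\tau_\ell(T_1)=\nabla\sigma_\ell(T_1)$ a.e. Since $\bz\in\bZ$ has $\div\bz=0$ and $\bz\cdot\bn=0$, we get $\mathfrak{c}(\bz;T_1,\tau_\ell(T_1))=\int_\O\bz\cdot\nabla\sigma_\ell(T_1)=0$. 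This integration by parts is legitimate because $\sigma_\ell(T_1)\in\H$ and $\bz\in\H(\div,\O)$.

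Hence $\kappa_*|\tau_\ell(T_1)|^2_{\H^1(\O)}\le\|g\|_{\L^2(\O)}\textsf{C}_2|\tau_\ell(T_1)|_{\H^1(\O)}$, which is a bound uniform in $\ell$. Letting $\ell\to\infty$, $\tau_\ell(T_1)\to T_1$ in $\H$ by dominated convergence on $\nabla T_1\chi_{\{|T_1|\le\ell\}}$, and we obtain \eqref{eq:cotatemp}. Here $T_1\in\W^{1,\infty}(\O)$ already lies in $\L^\infty(\O)$, so one could test with $T_1$ directly. The truncation argument is written anyway because it gives the same bound for any solution.

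\emph{Uniqueness.} Let $T_2\in\H$ be another solution and set $w=T_1-T_2$. Subtracting the two equations with test function $S=\tau_\ell(w)$ and rearranging $\kappa(T_1)\nabla T_1-\kappa(T_2)\nabla T_2=\kappa(T_2)\nabla w+(\kappa(T_1)-\kappa(T_2))\nabla T_1$, we obtain
\[
\int_\O\kappa(T_2)\nabla w\cdot\nabla\tau_\ell(w)+\int_\O(\bz\cdot\nabla w)\tau_\ell(w)=-\int_\O(\kappa(T_1)-\kappa(T_2))\nabla T_1\cdot\nabla\tau_\ell(w).
\]
The convective term vanishes exactly as before, now with $\sigma_\ell(w)$. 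The left-hand side is bounded below by $\kappa_*|\tau_\ell(w)|^2_{\H^1(\O)}$. The right-hand side is bounded by $\kappa_{\text{lip}}\|\nabla T_1\|_{\L^\infty(\O)^2}\|w\|_{\L^2(\O)}|\tau_\ell(w)|_{\H^1(\O)}$, using \textbf{A1}) and the fact that $\nabla T_1\in\L^\infty$; this is the only place where the regularity hypothesis on $T_1$ enters.

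Dividing, we get $\kappa_*|\tau_\ell(w)|_{\H^1(\O)}\le\kappa_{\text{lip}}\|\nabla T_1\|_{\L^\infty(\O)^2}\|w\|_{\L^2(\O)}$. Letting $\ell\to\infty$ gives $|w|_{\H^1(\O)}$ on the left, and Poincar\'e \eqref{eq:Poincare} gives $\|w\|_{\L^2(\O)}\le\textsf{C}_2|w|_{\H^1(\O)}$. Therefore
\[
\bigl(1-\kappa_*^{-1}\kappa_{\text{lip}}\textsf{C}_2\|\nabla T_1\|_{\L^\infty(\O)^2}\bigr)|w|_{\H^1(\O)}\le0,
\]
and \eqref{eq:smalldataTemp} forces $w=0$.

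The main obstacle is justifying the cancellation $\int_\O(\bz\cdot\nabla w)\tau_\ell(w)=0$ when $w$ is only in $\H$ and $\bz$ only in $\H(\div,\O)$. For this we use the chain rule $\nabla\sigma_\ell(w)=\tau_\ell(w)\nabla w$ for the Lipschitz, piecewise $\mathcal C^1$ function $\sigma_\ell$ (with $\sigma_\ell(0)=0$, so $\sigma_\ell(w)\in\H$). We then apply the Green formula in $\H(\div,\O)\times\H$, where $\div\bz=0$ and $\sigma_\ell(w)|_\Gamma=0$ kill both terms. A secondary point is the passage $\ell\to\infty$, which needs $\nabla\tau_\ell(w)=\chi_{\{|w|\le\ell\}}\nabla w\to\nabla w$ in $\L^2$; this follows by dominated convergence.
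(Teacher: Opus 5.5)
Your proposal is correct and follows the paper's proof essentially step by step. Both arguments test with $\tau_\ell(T_1)$ and $\tau_\ell(T_1-T_2)$, kill the convective term through $\nabla\sigma_\ell(\cdot)=\tau_\ell(\cdot)\nabla(\cdot)$ and $\div\bz=0$, bound the $\kappa$-difference using \textbf{A1}), H\"older and Poincar\'e, and then let $\ell\to\infty$. Your explicit justification of the Green formula (using $\sigma_\ell(w)\in\H$, so the boundary term vanishes) is slightly more careful than the paper's.
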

\begin{proof}
First, since $\tau_\ell(S)\in\L^{\infty}(\O)$ for $S\in\H$, testing in \eqref{eq:Calordivfree} with $S=\tau_\ell(T_1)$ we obtain
\begin{equation*}
\int_\O \kappa(T_1)\nabla T_1\cdot \nabla\tau_\ell(T_1) + \int_\O (\bz\cdot\nabla T_1)\tau_\ell(T_1) = \int_\O g\tau_\ell(T_1).
\end{equation*}
Now, observe that from the definition of $\nabla\tau_\ell$ we obtain
\begin{equation*}
\int_{\O}\kappa(T_1)\nabla T_1\cdot\nabla \tau_\ell(T_1) \geq \kappa_*\int_\O \nabla T_1\cdot \nabla\tau_\ell(T_1) = \kappa_*|\tau_\ell(T_1)|_{\H^1(\O)}^2.
\end{equation*}
On the other hand, its easy to check that $\nabla\sigma_\ell(T_1)=\tau_\ell(T_1)\nabla T_1$. Then,  using integration by parts we obtain
\begin{equation*}
\int_\O (\bz\cdot\nabla T_1)\tau_\ell(T_1) = \int_\O \bz\cdot\nabla\sigma_\ell(T_1) = -\int_\O (\div\bz )\sigma_\ell(T_1) = 0.
\end{equation*}
Next, using Cauchy-Schwarz inequality and \eqref{eq:Poincare} we obtain
\begin{equation*}
\kappa_*|\tau_\ell(T)|_{\H^1(\O)}^2 \leq \|g\|_{\L^2(\O)}\|\tau_{\ell}(T)\|_{\L^2(\O)} \leq \textsf{C}_2\|g\|_{\L^2(\O)}|\tau_\ell(T)|_{\H^1(\O)},
\end{equation*}
and therefore we have
\begin{equation*}
|\tau_\ell(T)|_{\H^1(\O)} \leq \dfrac{\textsf{C}_2}{\kappa_*}\|g\|_{\L^2(\O)}.
\end{equation*}
From the strong convergence of $\tau_{\ell}(T_1)$ to $T_1$ in $\H^1(\O)$, we conclude that $T_1$ satisfies \eqref{eq:cotatemp}. 

Let us focus now on the uniqueness.  Let $T_2\in\H$ be another solution of \eqref{eq:Calordivfree} and setting $T\coloneqq T_1-T_2$, its easy to check that 
\begin{equation*}
\int_\O \kappa(T_2)\nabla T\cdot\nabla S + \int_\O (\bz\cdot\nabla T)S = \int_\O (\kappa(T_2)-\kappa(T_1))\nabla T_1\cdot\nabla S,\quad\forall S\in\H\cap\L^{\infty}(\O).
\end{equation*}
Next, choosing again $S=\tau_\ell(T)$ in the above equation, combined with the fact that $\nabla\sigma_{\ell}(T)=\tau_{\ell}(T)\nabla T$ and $\bz\in\bZ$ (to neglect the corresponding convective term), we obtain 
\begin{equation*}
\int_\O \kappa(T_2)\nabla T\cdot \nabla \tau_{\ell}(T) = \int_\O (\kappa(T_2)-\kappa(T_1))\nabla T_1\cdot\nabla \tau_\ell(T).
\end{equation*}
Then, invoking  assumption \textbf{A1}), using H\"{o}lder's inequality, and \eqref{eq:Poincare}, we deduce that
\begin{align*}
\kappa_*|\tau_\ell(T)|_{\H^1(\O)}^2 &\leq \kappa_{\text{lip}}\int_\O |T||\nabla T_1||\nabla\tau_\ell(T)| \\
&\leq \kappa_{\text{lip}}\|\nabla T_1\|_{\L^{\infty}(\O)^2}|\tau_\ell(T)|_{\H^1(\O)}\|T\|_{\L^2(\O)} \\
&\leq \kappa_{\text{lip}}\textsf{C}_2\|\nabla T_1\|_{\L^\infty(\O)^2}|\tau_{\ell}(T)|_{\H^1(\O)}|T|_{\H^1(\O)}.
\end{align*}
The above estimate in conjunction with the strong convergence of $\tau_{\ell}(T)$ to $T$ in $\H^1(\O)$ implies that 
\begin{equation*}
\kappa_*|T|_{\H^1(\O)} \leq \kappa_{\text{lip}}\textsf{C}_2\|\nabla T_1\|_{\L^\infty(\O)^2}|T|_{\H^1(\O)},
\end{equation*}
where, assuming that \eqref{eq:smalldataTemp} holds we have that $T=0$ and therefore, $T_1=T_2$. This concludes the proof.
\end{proof}

Now, we are in position to prove uniqueness of solutions of coupled system \eqref{eq:DHW2}.

\begin{theorem}\label{eq:uniqcont}
Let us suppose that  $\nu(\cdot)$ and $\kappa(\cdot)$ satisfy assumptions \textbf{A0}) and \textbf{A1}), respectively. In addition to the assumptions of Theorem \ref{teo:existencia}, assume that system \eqref{eq:DHW2} has a solution $(\bu_1,\textsf{p}_1,T_1)\in\bV_0\times\Q\times\H$ such that $T_1\in\W^{1,\infty}(\O)$, $\bu_1\in\L^r(\O)^2$ for some $r>2$ and the following condition holds
\begin{equation}
\label{eq:uniqDHW2} 
\dfrac{\kappa_* - \textsf{C}_2\kappa_{\text{lip}}\|\nabla T_1\|_{\L^{\infty}(\O)^2}}{\nu_*^{-1}\textsf{C}_{\frac{2r}{r-2}}\nu_{\text{lip}}\|\bu_1\|_{\L^r(\O)^2}\|T_1\|_{\L^{\infty}(\O)}} < 1,
\end{equation}
where $\textsf{C}_{\frac{2r}{r-2}}>0$ is the best constant of the Sobolev embedding $\H_0^1(\O) \hookrightarrow \L^{\frac{2r}{r-2}}(\O)$. Then,  
$(\bu_1,\textsf{p}_1,T_1)\in\bV_0\times\Q\times\H$  is unique.
\end{theorem}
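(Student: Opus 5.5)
The plan is to argue by contradiction on the difference of two solutions. I estimate the Darcy part and the heat part separately, then combine them into one inequality for $|T_1-T_2|_{\H^1(\O)}$ that forces the difference to vanish under the smallness condition \eqref{eq:uniqDHW2}.

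\emph{Setup.} Let $(\bu_2,\textsf{p}_2,T_2)\in\bV_0\times\Q\times\H$ be a second solution of \eqref{eq:DHW2}. Set
\[
\bu\coloneqq\bu_1-\bu_2,\qquad \textsf{p}\coloneqq\textsf{p}_1-\textsf{p}_2,\qquad T\coloneqq T_1-T_2 .
\]
From the second equation of \eqref{eq:DHW2}, both $\bu_1,\bu_2\in\bZ$, hence $\bu\in\bZ$.

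\emph{Darcy estimate.} Subtracting the first equations gives
\[
a(T_2;\bu,\bv)+b(\bv,\textsf{p})=-\int_\O(\nu(T_1)-\nu(T_2))\bu_1\cdot\bv\qquad\forall\bv\in\bV_0 .
\]
I test with $\bv=\bu$, so that $b(\bu,\textsf{p})=0$. Then I use coercivity, \textbf{A0}), and H\"older's inequality with exponents $\frac{2r}{r-2}$, $r$, $2$, followed by the embedding $\H_0^1(\O)\hookrightarrow\L^{\frac{2r}{r-2}}(\O)$. This yields
\[
\|\bu\|_{\L^2(\O)^2}\le \nu_*^{-1}\nu_{\text{lip}}\,\textsf{C}_{\frac{2r}{r-2}}\,\|\bu_1\|_{\L^r(\O)^2}\,|T|_{\H^1(\O)} .
\]

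\emph{Heat estimate.} Subtracting the heat equations, for all $S\in\H\cap\L^\infty(\O)$,
\[
\int_\O\kappa(T_2)\nabla T\cdot\nabla S+\int_\O(\bu_2\cdot\nabla T)S=\int_\O(\kappa(T_2)-\kappa(T_1))\nabla T_1\cdot\nabla S-\int_\O(\bu\cdot\nabla T_1)S .
\]
As in the proof of Theorem \ref{teo:caloruniq}, I test with $S=\tau_\ell(T)$.
\begin{itemize}
\item The term with $\bu_2$ vanishes, because $\nabla\sigma_\ell(T)=\tau_\ell(T)\nabla T$ and $\bu_2\in\bZ$.
\item The $\kappa$-term is bounded exactly as in Theorem \ref{teo:caloruniq}, by $\kappa_{\text{lip}}\textsf{C}_2\|\nabla T_1\|_{\L^\infty(\O)^2}|\tau_\ell(T)|_{\H^1(\O)}|T|_{\H^1(\O)}$.
\item For the new coupling term, I integrate by parts using $\div\bu=0$ and $\bu\cdot\bn=0$. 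This moves the derivative, $\int_\O(\bu\cdot\nabla T_1)\tau_\ell(T)=-\int_\O(\bu\cdot\nabla\tau_\ell(T))T_1$, which is bounded by $\|T_1\|_{\L^\infty(\O)}\|\bu\|_{\L^2(\O)^2}|\tau_\ell(T)|_{\H^1(\O)}$.
\end{itemize}
Here lies the main technical obstacle: $\nabla T_1\cdot\bu\,\tau_\ell(T)$ could also be bounded directly with $\|\nabla T_1\|_{\L^\infty}$, but the integration by parts is what produces the factor $\|T_1\|_{\L^\infty(\O)}$ appearing in \eqref{eq:uniqDHW2}. To justify it rigorously I would approximate by smooth functions, since $T_1\in\W^{1,\infty}(\O)$ and $\tau_\ell(T)\in\H\cap\L^\infty(\O)$, so the product lies in $\H$.

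Dividing by $|\tau_\ell(T)|_{\H^1(\O)}$ and letting $\ell\to\infty$, using $\tau_\ell(T)\to T$ strongly in $\H^1(\O)$, I obtain
\[
\kappa_*|T|_{\H^1(\O)}\le \kappa_{\text{lip}}\textsf{C}_2\|\nabla T_1\|_{\L^\infty(\O)^2}|T|_{\H^1(\O)}+\|T_1\|_{\L^\infty(\O)}\|\bu\|_{\L^2(\O)^2}.
\]

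\emph{Combination.} Inserting the Darcy estimate into this inequality gives
\[
\big(\kappa_*-\textsf{C}_2\kappa_{\text{lip}}\|\nabla T_1\|_{\L^\infty(\O)^2}\big)|T|_{\H^1(\O)}\le \nu_*^{-1}\textsf{C}_{\frac{2r}{r-2}}\nu_{\text{lip}}\|\bu_1\|_{\L^r(\O)^2}\|T_1\|_{\L^\infty(\O)}\,|T|_{\H^1(\O)} .
\]
Hence $T=0$ whenever the right-hand coefficient is strictly smaller than the left-hand one. This is the smallness condition \eqref{eq:uniqDHW2}, read as the contraction factor (coupling constant over the net diffusion constant) being less than one. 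I would make explicit in the proof that this is the intended reading of the quotient, since as displayed \eqref{eq:uniqDHW2} has the roles of numerator and denominator reversed.

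\emph{Conclusion.} With $T=0$, the Darcy estimate gives $\bu=\b0$. Then $b(\bv,\textsf{p})=0$ for all $\bv\in\bV_0$, and the inf-sup condition \eqref{eq:infsupcontinua} yields $\textsf{p}=0$. This proves uniqueness of $(\bu_1,\textsf{p}_1,T_1)$.
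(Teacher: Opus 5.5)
Your proposal is correct and follows the paper's proof step by step. It uses the same Darcy estimate (testing with the velocity difference, H\"older with exponents $\frac{2r}{r-2}$, $r$, $2$, and the embedding), the same truncated test function $S=\tau_\ell(T_1-T_2)$ with the $\bu_2$-convection term removed via $\nabla\sigma_\ell(T)=\tau_\ell(T)\nabla T$, the same integration by parts in the coupling term to produce $\|T_1\|_{\L^\infty(\O)}$, and the inf-sup condition for the pressure. Your remark about \eqref{eq:uniqDHW2} is also right: the paper's final inequality only forces $\overline{T}=0$ when the displayed quotient is larger than one, i.e.\ when $\nu_*^{-1}\textsf{C}_{\frac{2r}{r-2}}\nu_{\text{lip}}\|\bu_1\|_{\L^r(\O)^2}\|T_1\|_{\L^{\infty}(\O)}$ is strictly smaller than $\kappa_*-\textsf{C}_2\kappa_{\text{lip}}\|\nabla T_1\|_{\L^\infty(\O)^2}$, which is exactly the condition you use.
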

\begin{proof}
Let $(\bu_2,\textsf{p}_2,T_2)\in\bV_0\times\Q\times\H$ be another solution of \eqref{eq:DHW2}. With these solutions at hand,  set $(\overline{\bu},\overline{\textsf{p}},\overline{T})\coloneqq (\bu_1-\bu_2,\textsf{p}_1-\textsf{p}_2,T_1-T_2)$. Elementary algebraic calculations yield to the following system
\begin{equation}\label{eq:DHW2reducido}
\left\{\begin{array}{rlc}
a(T_1;\bu_1,\bv)-a(T_2;\bu_1,\bv)+a(T_2;\overline{\bu},\bv)&=0, \\
\mathfrak{a}(T_1;T_1,S)-\mathfrak{a}(T_2;T_1,S) + \mathfrak{c}(\overline{\bu};T_1,S) + \mathfrak{a}(T_2;\overline{T},S) + \mathfrak{c}(\bu_2;\overline{T},S)&= 0, \\
\end{array}\right.
\end{equation}
which holds for every $(\bv,S)\in\bZ\times(\H\cap\L^{\infty}(\O))$. We observe that  for all $\textsf{q}\in\Q$ and $i=1,2$, there  holds that $b(\bu_i,\textsf{q})=0$. 
Testing the first equation of \eqref{eq:DHW2reducido} with $\bv=\overline{\bu}$, using the coercivity of $a(T_2;\cdot,\cdot)$ and assumption \textbf{A0}) we have
\begin{align*}
\nu_*\|\overline{\bu}\|_{\L^2(\O)^2}^2 &\leq |a(T_1;\bu_1,\overline{\bu})-a(T_2;\bu_1,\overline{\bu})| \\
&\leq \nu_{\text{lip}}\int_{\O} |\overline{T}||\bu_1\cdot\overline{\bu}| \\
&\leq \nu_{\text{lip}}\|\overline{T}\|_{\L^{\frac{2r}{r-2}}(\O)}\|\bu_1\|_{\L^r(\O)^2}\|\overline{\bu}\|_{\L^2(\O)^2} \\
&\leq \textsf{C}_{\frac{2r}{r-2}}\nu_{\text{lip}}|\overline{T}|_{\H^1(\O)}\|\bu_1\|_{\L^r(\O)^2}\|\overline{\bu}\|_{\L^2(\O)^2},
\end{align*}
and therefore, we deduce that 
\begin{equation}\label{eq:u1}
\|\overline{\bu}\|_{\L^2(\O)^2} \leq \nu_*^{-1}\textsf{C}_{\frac{2r}{r-2}}\nu_{\text{lip}}|\overline{T}|_{\H^1(\O)}\|\bu_1\|_{\L^r(\O)^2}.
\end{equation}

On the other hand, since $T_1\in\L^{\infty}(\O)$, we derive from the integration by parts formula the following identity
\begin{equation*}
\int_{\O} (\overline{\bu}\cdot\nabla T_1)S = -\int_{\O} (\overline{\bu}\cdot\nabla S)T_1.
\end{equation*}
Now, testing the  second equation of system \eqref{eq:DHW2reducido} with $S=\tau_\ell(\overline{T})$ we deduce that $\mathfrak{c}(\bu_2;\overline{T},\tau_\ell(\overline{T}))=0$.
Then, using the Cauchy-Schwarz inequality and assumption \textbf{A1}) we obtain the following estimate
\begin{multline*}
\kappa_*|\tau_\ell(\overline{T})|_{\H^1(\O)}^2 \leq \kappa_{\text{lip}}\int_\O |\overline{T}||\nabla T_1\cdot\nabla\tau_\ell(\overline{T})| + \int_\O |\overline{\bu}\cdot\nabla \tau_\ell(\overline{T})||T_1| \\
\leq |\tau_\ell(\overline{T})|_{\H^1(\O)}\left(\kappa_{\text{lip}}\|\overline{T}\|_{\L^2(\O)}\|\nabla T_1\|_{\L^{\infty}(\O)^2}+\|\overline{\bu}\|_{\L^2(\O)^2}\|T_1\|_{\L^{\infty}(\O)}\right) \\
\leq |\tau_\ell(\overline{T})|_{\H^1(\O)}\left(\textsf{C}_2\kappa_{\text{lip}}|\overline{T}|_{\H^1(\O)}\|\nabla T_1\|_{\L^{\infty}(\O)^2}+\|\overline{\bu}\|_{\L^2(\O)^2}\|T_1\|_{\L^{\infty}(\O)}\right),
\end{multline*}
and therefore, since  $\tau_\ell(\overline{T})$ converge strongly  to $\overline{T}$, we conclude that 
\begin{equation*}
\kappa_*|\overline{T}|_{\H^1(\O)} \leq \textsf{C}_2\kappa_{\text{lip}}|\overline{T}|_{\H^1(\O)}\|\nabla T_1\|_{\L^{\infty}(\O)^2}+\|\overline{\bu}\|_{\L^2(\O)^2}\|T_1\|_{\L^{\infty}(\O)},
\end{equation*}
or equivalently 
\begin{equation}\label{eq:T1}
\left(\kappa_* - \textsf{C}_2\kappa_{\text{lip}}\|\nabla T_1\|_{\L^{\infty}(\O)^2}\right)|\overline{T}|_{\H^1(\O)} \leq \|\overline{\bu}\|_{\L^2(\O)^2}\|T_1\|_{\L^{\infty}(\O)}.
\end{equation}
Then, replacing \eqref{eq:u1} into \eqref{eq:T1} and using simple algebraic manipulations we obtain
\begin{equation*}
\left(\dfrac{\kappa_* - \textsf{C}_2\kappa_{\text{lip}}\|\nabla T_1\|_{\L^{\infty}(\O)^2}}{\nu_*^{-1}\textsf{C}_{\frac{2r}{r-2}}\nu_{\text{lip}}\|\bu_1\|_{\L^r(\O)^2}\|T_1\|_{\L^{\infty}(\O)}} - 1\right)|\overline{T}|_{\H^1(\O)} \leq 0,
\end{equation*}
where, assuming that \eqref{eq:uniqDHW2} holds, we obtain $\overline{T}=0$ and therefore $T_1=T_2$. Replacing $\overline{T}=0$ into \eqref{eq:u1} we obtain $\overline{\bu}=\boldsymbol{0}$ and then, $\bu_1=\bu_2$. Finally, from \eqref{eq:infsupcontinua}, we obtain easily that $\overline{\textsf{p}}=0$ so $\textsf{p}_1=\textsf{p}_2$. This concludes the proof.
\end{proof}

\section{A virtual  element approximation}
\label{sec:vem}
In this section, we introduce a virtual element approximation for the nonlinear system \eqref{eq:DH} and derive error estimates. For this purpose, we first introduce some notions and basic ingredients, which are inspired in \cite{BEIRAOMIXED}. From now on, we assume that $\Omega$ is a polygonal and bounded domain with Lipschitz boundary.

Let $\{\O_h\}_{h>0}$ be a sequence of decompositions of $\Omega$ into general polygonal elements $E$. Let  $h_E$ be  the diameter of the element $E$ and  $h\coloneqq \max \{ h_E: E\in\O_h \}$ be the mesh-size. For $E \in \O_h$, we denote by $|E|$ its area.

\subsection{Mesh regularity}
Following \cite{BEIRAOMIXED}, we make the following assumptions on the sequence $\{\O_h\}_{h>0}$: There exists $\varrho>0$ such that
\begin{itemize}
\item[\textbf{A2})] $E$ is star-shaped with respect to a ball $B_\textsf{r}$ with radius $\textsf{r} \geq \varrho h_{E}$
\item[\textbf{A3})] the distance between any two vertices of $E$ is $\ell \geq \varrho h_{E}$,
\end{itemize}

Now, given $m,n \in \mathbb{N}$ with $n\leq m$, $s>0$ and $E\in\O_h$, we introduce the sets
\begin{itemize}[leftmargin=*]
\item[$\bullet$] $\mathbb{P}_{m}(E)$ as the set of polynomials of degree $\leq m$ defined over $E$,
\item[$\bullet$] $\mathcal{G}_m(E) \coloneqq  \nabla \mathbb{P}_{m+1}(E)$,
\item[$\bullet$] $\mathcal{G}_m(E)^{\perp} \coloneqq $ the $\L^2(E)$ orthogonal of $\mathcal{G}_m(E)$ in $\mathbb{P}_m(E)^2$,
\item[$\bullet$] $\mathbb{P}_{m\setminus n}(E) \coloneqq $ the $\L^2(E)$ orthogonal to $\mathbb{P}_{n}(E)$ in $\mathbb{P}_{m}(E)$,
\item[$\bullet$] $\mathbb{P}_m(\O_h) \coloneqq  \{q \in \L^2(\O) : q|_E \in \mathbb{P}_m(E) \quad \forall E\in\O_h\}$,
\item[$\bullet$] $\H^{s}(\O_h) \coloneqq  \{v\in\L^2(\O) : v|_E\in\H^s(E) \quad \forall E\in\O_h\}$.
\end{itemize}

For $s\geq1$,  the space $\H^s(\O_h)$ can be endowed with the following broken seminorm
\begin{equation*}
|v|_{\H^s(\O_h)}^2 \coloneqq  \sum_{E\in\O_h} |v|_{\H^s(E)}^2, \quad \forall v\in\H^s(\O_h).
\end{equation*}
The vectorial counterparts of $\mathbb{P}_m(\O_h)$ and $\H^s(\O_h)$ will be denoted by $\mathbb{P}_m(\O_h)^2$ and $\H^s(\O_h)^2$, respectively. 

Next, we introduce the local $\L^2$-orthogonal projection $\Pi_m^{0,E}:\L^2(E) \longrightarrow \mathbb{P}_m(E)$, where for $S\in\L^2(E)$, $\Pi_m^{0,E}S$ is given as the solution of 
\begin{equation*}
\int_E (S-\Pi_m^{0,E}S)q_m= 0, \quad \forall q_m\in\mathbb{P}_m(E).
\end{equation*}
The vectorial version of $\Pi_m^{0,E}$ will be denoted by $\bb{\Pi}_m^{0,E}$. Then, we introduce the operator $\Pi_m^0 : \L^2(\Omega) \longrightarrow \mathbb{P}_m(\O_h)$ which is defined by 
\begin{equation*}
(\Pi_m^0 S)|_E \coloneqq \Pi_m^{0,E}(S|_E), \quad \forall S\in\L^2(\O), \quad \forall E\in\O_h,
\end{equation*}
and its vectorial counterpart will be denoted by $\bb{\Pi}_m^{0}$. For this operator, we have the following result, which is a classical approximation result for polynomials on star-shaped domains (see \cite{MR2373954}).
\begin{lemma}[Bramble-Hilbert]\label{prop:bramblehilbert}
Let $0\leq t\leq s\leq \ell + 1$ and $1\leq q, p\leq \infty$ such that $s-2/p>t-2/q$. Then, for any $E\in\O_h$,
\begin{equation*}
\underset{q_{\ell}\in\mathbb{P}_{\ell}(E)}{\inf} |S-q_\ell|_{\W^{t,q}(E)} \leq Ch_E^{s-t+2/q-2/p}|S|_{\W^{s,p}(E)} \quad \forall S\in\W^{s,p}(E).
\end{equation*}
\end{lemma}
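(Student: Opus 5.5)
The plan is the classical averaged Taylor polynomial argument of Dupont--Scott, as presented in \cite{MR2373954}, combined with a scaling argument that uses only the star-shapedness assumption \textbf{A2}). First reduce to a reference configuration. Let $\widehat E:=h_E^{-1}(E-\textsf{x}_0)$, where $\textsf{x}_0$ is the center of the ball $B_\textsf{r}$. Then $\widehat E$ has unit diameter and is star-shaped with respect to a ball of radius at least $\varrho$. For $S\in\W^{s,p}(E)$ set $\widehat S(\widehat{\textsf{x}}):=S(\textsf{x}_0+h_E\widehat{\textsf{x}})$. A change of variables gives
\begin{equation*}
|S-q|_{\W^{t,q}(E)}=h_E^{-t+2/q}|\widehat S-\widehat q|_{\W^{t,q}(\widehat E)},\qquad |\widehat S|_{\W^{s,p}(\widehat E)}=h_E^{s-2/p}|S|_{\W^{s,p}(E)}.
\end{equation*}
The claimed power $h_E^{s-t+2/q-2/p}$ follows at once if we prove on $\widehat E$ that
\begin{equation*}
\inf_{q_\ell\in\mathbb{P}_\ell}|\widehat S-q_\ell|_{\W^{t,q}(\widehat E)}\le C|\widehat S|_{\W^{s,p}(\widehat E)}
\end{equation*}
with $C$ depending only on $\varrho,\ell,s,t,p,q$.

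On $\widehat E$, first treat integer $s=m\le \ell+1$. Take $q_\ell$ to be the averaged Taylor polynomial of degree $m-1$ of $\widehat S$ over the ball. This polynomial lies in $\mathbb{P}_{m-1}\subset\mathbb{P}_\ell$ and commutes with derivatives in the sense $\partial^\alpha Q^{m}\widehat S=Q^{m-|\alpha|}\partial^\alpha\widehat S$. The Sobolev integral representation of the remainder, $\widehat S-Q^m\widehat S=\sum_{|\alpha|=m}\int k_\alpha(\cdot,\textsf{y})\partial^\alpha\widehat S(\textsf{y})\,d\textsf{y}$, has a kernel bounded by $C|\textsf{x}-\textsf{y}|^{m-2}$, where $C$ depends only on the chunkiness parameter, hence only on $\varrho$. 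Differentiating $t$ times lowers the order to $m-t$. This leaves a Riesz potential of order $m-t$ applied to $D^m\widehat S$.

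The Riesz potential of order $m-t$ maps $\L^p$ into $\L^q$ on bounded sets exactly when $m-t-2/p+2/q>0$. This is the hypothesis $s-2/p>t-2/q$, and the argument uses Young's inequality together with Hardy--Littlewood--Sobolev for the borderline range. For noninteger $s$ or $t$, interpolate between consecutive integer cases using the K-method. The polynomial $Q^{\lfloor s\rfloor}$ is a fixed linear operator, so the interpolation applies directly. Alternatively, for fractional $s$, handle the last fractional order through the Sobolev--Slobodeckij seminorm, estimating the remainder by a Poincaré-type inequality for fractional seminorms on star-shaped domains.

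The main obstacle is controlling the constants uniformly over all admissible polygons. The remaining steps, the endpoint cases $p=1$ or $q=\infty$ and the fractional interpolation, are routine but technical. For uniformity I would rely on the explicit dependence of the Dupont--Scott kernel bound on the chunkiness ratio $\mathrm{diam}/\textsf{r}\le\varrho^{-1}$ given by \textbf{A2}). This makes $C$ independent of $E$ and $h$, and \textbf{A3}) is not needed for this lemma.
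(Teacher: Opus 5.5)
The paper gives no proof of this lemma; it quotes it from \cite{MR2373954}. For integer $s$ and $t$, your argument is exactly the classical Dupont--Scott proof behind that citation, with constants depending only on $\varrho$: scaling to $\widehat E$, the averaged Taylor polynomial of degree $m-1$, the Sobolev integral representation, and a bound for the potential with kernel $|\textsf{x}-\textsf{y}|^{m-t-2}$. You are right that \textbf{A3}) is not needed. Also, since the hypothesis $s-2/p>t-2/q$ is strict, Young's inequality (H\"older's when $t=s$) already suffices, and Hardy--Littlewood--Sobolev is never used.

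The real gap is the fractional case, which matters here because \textbf{A4}) allows non-integer $s$. There are three problems.

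\emph{Wrong polynomial degree.} For $m<s<m+1$ the polynomial must have degree $m=\lfloor s\rfloor$, because $|\cdot|_{\W^{s,p}(\widehat E)}$ vanishes on $\mathbb{P}_m$. In your own convention $Q^{\lfloor s\rfloor}$ has degree $m-1$, and the bound then fails for every $\widehat S\in\mathbb{P}_m\setminus\mathbb{P}_{m-1}$ (take $t=0$). You need $Q^{m+1}$, which is admissible since $m\le\ell$.

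\emph{The K-method does not apply directly.} Interpolating $I-Q^{m+1}$ between $\W^{m,p}$ and $\W^{m+1,p}$ with the target $\W^{t,q}$ fixed requires boundedness from $\W^{m,p}(\widehat E)$ into $\W^{t,q}(\widehat E)$. That fails if $t>m$ or $m-2/p\le t-2/q$; for instance $p=2$, $q=\infty$, $t=0$, $s=3/2$, where $\W^{1,2}(\widehat E)\not\hookrightarrow\L^\infty(\widehat E)$. Moreover, interpolation only controls the full norm $\|\widehat S\|_{\W^{s,p}(\widehat E)}$, not the seminorm.

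What works is the alternative you mention in passing, made precise in two steps. First, use the same-exponent estimate $\inf_{q_m\in\mathbb{P}_m}\|\widehat S-q_m\|_{\W^{s,p}(\widehat E)}\le C|\widehat S|_{\W^{s,p}(\widehat E)}$; Dupont--Scott treat fractional orders, with $C$ depending only on the chunkiness. Second, apply the embedding $\W^{s,p}(\widehat E)\hookrightarrow\W^{t,q}(\widehat E)$, whose constant is uniform because every $\widehat E$ is Lipschitz with constants depending only on $\varrho$.

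\emph{A case that cannot be proved.} This embedding, and the lemma itself, fail when $t=s\notin\mathbb{N}$ and $q<p$, which the hypotheses allow. This is the non-embedding $\W^{s,p}\not\hookrightarrow\W^{s,q}$ ($q<p$, $s\notin\mathbb{N}$) of Mironescu and Sickel. For example, take $s=t=\tfrac12$, $\ell=0$, $p=4$, $q=2$. The lacunary series $\sum_{j\ge1}j^{-1/2}2^{-j/2}\cos(2^jx_1)$ has finite $\W^{1/2,4}(E)$ seminorm but infinite $\H^{1/2}(E)$ seminorm, so the left-hand side is $+\infty$. No argument can cover that case, so it must be excluded from the statement; this is harmless, since the paper never uses it.
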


We use the same notation for its vectorial counterpart $\bb{\Pi}_m^0$. Now, the assumptions on the sequence $\{\O_h\}_{h>0}$ allows us to derive the following result.
\begin{proposition}\label{prop:stabPi}
Under assumptions \textbf{A2)} and \textbf{A3}), given $1\leq p\leq +\infty$ and $m\geq 0$ an integer, the following estimate holds
\begin{equation*}
\|\Pi_m^{0,E}S\|_{\L^p(E)} \leq \mathrm{C}_p\|S\|_{\L^p(E)}, \quad \forall S\in\L^2(E)\cap\L^{p}(E).
\end{equation*}
\end{proposition}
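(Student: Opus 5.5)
Since $\Pi_m^{0,E}$ is a linear operator onto the finite-dimensional space $\mathbb{P}_m(E)$, I will use a scaling argument with norm equivalence on polynomial spaces. The shape regularity in \textbf{A2)} and \textbf{A3)} makes the constants independent of $E$. For $p=2$ the bound holds with constant $1$, since $\Pi_m^{0,E}$ is an orthogonal projection. The remaining work is to pass between $\L^p$ and $\L^2$ norms at the polynomial level and then on the data.

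\emph{Step 1: inverse-type inequalities on $\mathbb{P}_m(E)$.} I will show that for all $q_m\in\mathbb{P}_m(E)$ and $1\le r,s\le\infty$,
\[
\|q_m\|_{\L^r(E)}\le C|E|^{1/r-1/s}\|q_m\|_{\L^s(E)},
\]
with $C$ depending only on $m$ and $\varrho$. By \textbf{A2)}, $B_{\textsf{r}}\subset E\subset B(x_0,h_E)$ with $\textsf{r}\ge\varrho h_E$, and $|E|\simeq h_E^2$. Mapping by $\hat x=(x-x_0)/h_E$ sends $E$ into the unit ball $\hat B$ and sends $B_{\textsf{r}}$ to a ball of radius at least $\varrho$. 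On the fixed space $\mathbb{P}_m(\hat B)$, the norm $\|\cdot\|_{\L^\infty(\hat B)}$ is equivalent to $\|\cdot\|_{\L^s(\hat B_\varrho)}$ for any ball $\hat B_\varrho\subset\hat B$ of radius $\varrho$. The constants are uniform over the position of that ball by compactness and translation invariance of $\mathbb{P}_m$. Hence $\|\hat q\|_{\L^r(\hat E)}\le|\hat B|^{1/r}\|\hat q\|_{\L^\infty(\hat B)}\le C\|\hat q\|_{\L^s(\hat B_\varrho)}\le C\|\hat q\|_{\L^s(\hat E)}$. Scaling back gives the stated powers of $h_E$. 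This is the step I expect to be the main obstacle, namely getting uniformity in $E$. Assumption \textbf{A2)} alone suffices, and \textbf{A3)} is not needed here.

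\emph{Step 2: the case $p\ge2$.} Using Step 1 with $(r,s)=(p,2)$, then $L^2$-stability of the projection, then H\"older's inequality on $E$:
\[
\|\Pi_m^{0,E}S\|_{\L^p(E)}\le C|E|^{1/p-1/2}\|\Pi_m^{0,E}S\|_{\L^2(E)}\le C|E|^{1/p-1/2}\|S\|_{\L^2(E)}\le C\|S\|_{\L^p(E)}.
\]

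\emph{Step 3: the case $1\le p<2$.} Here H\"older's inequality goes the wrong way, so I will argue by duality. Let $p'$ be the conjugate exponent. Set $q_m=\Pi_m^{0,E}S$ and let $w=|q_m|^{p-2}q_m$ (with $w=\text{sgn}(q_m)$ if $p=1$), so that $\|q_m\|^p_{\L^p}=\int_E q_m w$ and $\|w\|_{\L^{p'}}=\|q_m\|_{\L^p}^{p-1}$. Since $w$ is not a polynomial, I use orthogonality in the form
\[
\|q_m\|_{\L^p(E)}=\sup_{\phi\in\mathbb{P}_m(E)}\frac{\int_E q_m\phi}{\|\phi\|_{\L^{p'}(E)}}\cdot c^{-1}.
\]
This norming over polynomials holds with a uniform constant $c$ by the same scaling and finite-dimensionality argument as in Step 1. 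Then $\int_E q_m\phi=\int_E S\phi\le\|S\|_{\L^p(E)}\|\phi\|_{\L^{p'}(E)}$, which gives the claim.

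An alternative for Step 3 is to apply Step 2 to the adjoint with exponent $p'\ge2$, using the self-adjointness of $\Pi_m^{0,E}$ in $\L^2$ and a density argument on $\L^2\cap\L^p$.
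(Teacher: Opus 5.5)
Your proof is correct, but it is organized differently from the paper's. The paper uses one argument for all $1\le p\le\infty$ at once. First, the polynomial inverse estimate gives $\|\Pi_m^{0,E}S\|_{\L^p(E)}^2\le Ch_E^{2(2/p-1)}\|\Pi_m^{0,E}S\|_{\L^2(E)}^2$. Next, $\|\Pi_m^{0,E}S\|_{\L^2(E)}^2=\int_E S\,\Pi_m^{0,E}S\le\|S\|_{\L^p(E)}\|\Pi_m^{0,E}S\|_{\L^q(E)}$, with $q$ conjugate to $p$. A second inverse estimate, $\|\Pi_m^{0,E}S\|_{\L^q(E)}\le Ch_E^{2/q-2/p}\|\Pi_m^{0,E}S\|_{\L^p(E)}$, makes the powers of $h_E$ cancel, and one divides by $\|\Pi_m^{0,E}S\|_{\L^p(E)}$.

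This is exactly your Step 3 with the concrete test function $\phi=q_m$. That choice is also the cleanest way to justify your uniform norming constant $c$, which you only assert. Two small fixes there: the auxiliary $w$ is never used, and the ``$=$'' in your norming formula should be ``$\le$''.

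Because the same trick works for $p\ge 2$, your case split is not necessary. Your Step 2 is still a valid and slightly more elementary alternative in that range. It needs the inverse estimate only in the direction $\L^p\lesssim\L^2$, together with $\L^2$-stability and H\"older applied to $S$. Your adjoint alternative for $p<2$ also works, and no density argument is needed, since $S\in\L^2(E)$ and $\L^{p'}(E)\subset\L^2(E)$.

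What your route adds is a self-contained scaling-and-compactness proof of the two-sided polynomial norm equivalence, with constants uniform over star-shaped polygons. The paper simply invokes this by citation. You also correctly observe that only \textbf{A2)} is needed and that \textbf{A3)} plays no role.
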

\begin{proof}
Let $S\in\L^2(E)\cap\L^{p}(E)$ and $1\leq q \leq +\infty$ be the conjugate exponent of $p$. Since assumptions \textbf{A2}) and \textbf{A3}) hold, for each $E\in\O_h$ we have the following inverse estimate for polynomials (\cite[Theorem 4.5.3]{MR2373954}):
\begin{equation}\label{eq:inversepoly}
\|p\|_{\L^q(E)} \leq Ch_E^{\frac{2}{q}-\frac{2}{p}}\|p\|_{\L^p(E)},
\end{equation}
for all $p\in\mathbb{P}_\ell(E)$, $\ell\geq 0$. Then, using the definition of $\Pi_m^{0,E}$, H\"{o}lder and Cauchy-Schwarz inequalities, combined with estimate \eqref{eq:inversepoly}, yield for each $E\in\O_h$
\begin{align*}
\|\Pi_m^{0,E}S\|_{\L^{p}(E)}^2 &\leq Ch_E^{2\left(\frac{2}{p}-1\right)}\|\Pi_m^{0,E}S\|_{\L^2(E)}^2 \\
&\leq Ch_E^{2\left(\frac{2}{p}-1\right)}\|\Pi_m^{0,E}S\|_{\L^q(E)}\|S\|_{\L^p(E)} \\
&\leq Ch_E^{2\left(\frac{2}{p}-1\right) + \frac{2}{q}-\frac{2}{p}}\|\Pi_m^{0,E}S\|_{\L^p(E)}\|S\|_{\L^p(E)} \\
&\leq \mathrm{C}_p\|\Pi_m^{0,E}S\|_{\L^p(E)}\|S\|_{\L^p(E)},
\end{align*}
implying that $\|\Pi_m^{0,E}S\|_{\L^p(E)} \leq \mathrm{C}_p\|S\|_{\L^p(E)}$, for all $S\in\L^2(E)\cap\L^{p}(E)$.  This concludes the proof.
\end{proof}

Let us  introduce the $\H^{1}(E)$-seminorm projection $\Pi^{\nabla,E}_{m+1}: \H^1(E) \rightarrow \mathbb{P}_{m+1}(E)$, which is defined for every $S\in\H^1(E)$ as the solution of 
\begin{align*}
\label{eq:gradproj}
\left\{\begin{array}{rll}
  \ds\int_{E}\nabla q_{m+1}\cdot \nabla(S  - \Pi^{\nabla,E}_{m+1}S) &=& 0 \quad \forall q_{m+1} \in \mathbb{P}_{m+1}(E),\\\\
\ds\int_{\partial E} (S  - \Pi^{\nabla,E}_{m+1}S)&=&0.
\end{array}\right.
\end{align*}
In the same way to define $\Pi_m^0$, we introduce the operator $\Pi_{m+1}^{\nabla}:\H^1(\O_h)\longrightarrow\mathbb{P}_m(\O_h)$ which is defined by 
\begin{equation*}
(\Pi_{m+1}^\nabla S)|_{E} \coloneqq \Pi_{m+1}^{\nabla,E}(S|_E), \quad \forall S\in\H^1(\O_h), \quad \forall E\in\O_h.
\end{equation*}
\subsection{Virtual space for the velocity}
Let $k\geq0$ be an integer. We introduce the local virtual space $\bV_h^k(E)$ for the velocity variable, which is given by
\begin{multline*}
\bV_h^k(E)\coloneqq \{\bv_h\in\H(\div,E)\cap\H(\rot,E): \bv_h\cdot\bn|_e \in \mathbb{P}_k(e) \;\; \forall e\subset\partial E, \\
\div\bv_h \in \mathbb{P}_k(E)\;\;\text{and}\;\;\rot\bv_h \in \mathbb{P}_{k-1}(E)\}.
\end{multline*}

In what follows, we present some properties of the local virtual space $\bV_h^k(E)$. For further details, we refer to \cite{BEIRAOMIXED}.
\begin{itemize}
 \item [\textbf{(P1)}] \textbf{Polynomial inclusion:} $\mathbb{P}_k(E)^2 \subseteq \bV_h^k(E)$,
 \item [\textbf{(P2)}] \textbf{Degrees of freedom:} the following linear operators $\mathbf{D}_{\mathbf{V}}$ constitute a set of DoFs for the virtual element space $\bV_h^k(E)$:
 \begin{itemize}
 \item [$\mathbf{D}_{\mathbf{V}} \mathbf{1}$:] The edge moments 
 \begin{equation*}
 \int_{e} (\bv_h\cdot\bn)q_k, \quad \forall q_k\in\mathbb{P}_k(e), \quad \forall e\subset \partial E
 \end{equation*}
 \item [$\mathbf{D}_{\mathbf{V}} \mathbf{2}$:] The interior moments
 \begin{equation*}
 \int_E \bv_h\cdot \bb{g}_{k-1}, \quad \forall \bb{g}_{k-1}\in\mathcal{G}_{k-1}(E),
 \end{equation*}
 \item [$\mathbf{D}_{\mathbf{V}} \mathbf{3}$:] The interior moments
 \begin{equation*}
 \int_E \bv_h\cdot \bb{g}_{k}^{\perp}, \quad \forall \bb{g}_{k}^\perp\in\mathcal{G}_k(E)^\perp.
 \end{equation*}
 \end{itemize}
\end{itemize}
Then, the dimension of $\bV_h^k(E)$ is
\begin{equation*}
\text{dim}(\bV_h^k(E)) = (\text{number of edges of $E$})(k+1) + k^2 + 2k.
\end{equation*}
\begin{remark}
According to \cite[Section 3.2]{BEIRAOMIXED}, for each $E\in\O_h$ and $e\subset\partial E$, the DoFs $\mathbf{D}_{\mathbf{V}}$ allows us to compute $\div\bv_h$, $(\bv_h\cdot\bn)|_{e}$ and $\bb{\Pi}_k^{0,E}\bv_h$, $\forall \bv_h\in\bV_h^k(E)$.
\end{remark}
The global virtual space is given by
\begin{equation*}
\bV_h \coloneqq  \{\bv_h\in\bV_0 : \bv_h|_E \in \bV_h^k(E) \quad \forall E\in\O_h\},
\end{equation*}
and its  DoFs  are obtained by collecting the local ones, and the DoFs on the boundary are fixed to zero, according to the definition of $\bV_0$. 

Now, we present optimal approximation properties related to the virtual space $\bV_h$. First, we introduce the Fortin operator $\bb{I}_h^F:\H^1(\O)^2\longrightarrow\bV_h$, which is defined through the degrees of freedom $\mathbf{D}_\mathbf{V}$ by (\cite{BEIRAOMIXED}):
\begin{equation*}
\begin{split}
\displaystyle \int_{e} (\bv - \bb{I}_h^F\bv)\cdot\bn q_k &= 0, \quad \forall \;\; \text{edge} \;\;e, \quad \forall q_k\in\mathbb{P}_k(e), \quad \forall e\subset \partial E,\\
\displaystyle \int_E (\bv-\bb{I}_h^F\bv)\cdot \bb{g}_{k-1} &=0, \quad \forall E\in\O_h, \quad \forall \bb{g}_{k-1}\in\mathcal{G}_{k-1}(E), \\
\displaystyle \int_E (\bv-\bb{I}_h^F\bv)\cdot \bb{g}_{k}^{\perp}&=0, \quad \forall E\in\O_h, \quad \forall \bb{g}_{k}^\perp\in\mathcal{G}_k(E)^\perp.
\end{split}
\end{equation*}
%

With this operator at hand, we have the following interpolation error estimate (see \cite[Proposition 4.1]{10.1093/imanum/drad078}).
\begin{proposition}\label{prop:errorinterp}
Under assumption \textbf{A2}), for every $\bv\in\bV_0\cap\H^s(\O)^2$ with $1\leq s\leq k+1$, the following estimate holds
\begin{equation*}
\|\bv-\bb{I}_h^F\bv\|_{\L^2(\O)^2} \leq Ch^s|\bv|_{\H^s(\O)^2},
\end{equation*}
where $C>0$ is independent of $h$ and $k$.
\end{proposition}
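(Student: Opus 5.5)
Since the estimate is local in nature, the plan is to prove it elementwise and then sum over $E\in\O_h$. Fix $E\in\O_h$ and $\bv\in\bV_0\cap\H^s(\O)^2$ with $1\le s\le k+1$. The key structural fact is that $\bb{I}_h^F$ reproduces polynomials: because of the polynomial inclusion \textbf{(P1)} and the unisolvence of the DoFs $\mathbf{D}_{\mathbf{V}}$ in \textbf{(P2)}, we have $\bb{I}_h^F\bb{q}=\bb{q}$ on $E$ for every $\bb{q}\in\mathbb{P}_k(E)^2$. Here $\bb{I}_h^F$ is applied locally. This is legitimate because the edge moments of $\bv\cdot\bn$ are single valued across interior edges and vanish on $\Gamma$ for $\bv\in\bV_0$, so the local interpolants glue into an element of $\bV_h$. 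We therefore write
\begin{equation*}
\bv-\bb{I}_h^F\bv=(\bv-\bb{q})-\bb{I}_h^F(\bv-\bb{q})\quad\text{on }E,
\end{equation*}
and this reduces the problem to a local stability bound for $\bb{I}_h^F$ combined with the Bramble--Hilbert Lemma \ref{prop:bramblehilbert}.

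The main step, and the one I expect to be the principal obstacle, is the local stability estimate
\begin{equation*}
\|\bb{I}_h^F\bw\|_{\L^2(E)^2}\le C\big(\|\bw\|_{\L^2(E)^2}+h_E|\bw|_{\H^1(E)^2}\big)\qquad\forall\,\bw\in\H^1(E)^2,
\end{equation*}
with $C$ depending only on $\varrho$ and $k$. My approach is a scaling argument: map $E$ to a reference polygon of unit diameter, where assumptions \textbf{A2}) and \textbf{A3}) guarantee uniform shape regularity and a bounded number of edges. On the reference element we then need two ingredients. First, the DoF functionals must be bounded in terms of the $\H^1$ norm. The edge moments are controlled through the trace inequality $\|\bw\cdot\bn\|_{\L^2(e)}\le C(h_E^{-1/2}\|\bw\|_{\L^2(E)}+h_E^{1/2}|\bw|_{\H^1(E)})$, and the interior moments are controlled by Cauchy--Schwarz. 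Second, a function in $\bV_h^k(E)$ must be controlled in $\L^2$ by its DoFs. For this I would use the decomposition $\bv_h=\nabla\phi+\curl\psi$ with $\div\bv_h$ and $\rot\bv_h$ polynomial. The Neumann problem for $\phi$ is driven by $\div\bv_h$ and the edge data $\bv_h\cdot\bn$, and both of these are determined by the DoFs $\mathbf{D}_{\mathbf{V}}\mathbf{1}$ and $\mathbf{D}_{\mathbf{V}}\mathbf{2}$, since integrating by parts against $\nabla q_{k+1}$ yields $\div\bv_h$. The rotational part is then controlled via $\mathbf{D}_{\mathbf{V}}\mathbf{3}$ together with elliptic stability bounds that hold uniformly on star-shaped polygons. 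The delicate point is to make the constants uniform; for this I would rely on the uniform elliptic and trace constants available under \textbf{A2})--\textbf{A3}), as in \cite{BEIRAOMIXED}.

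With the stability estimate in hand, we choose $\bb{q}$ as the averaged Taylor polynomial, or equivalently the Bramble--Hilbert minimizer, of degree $k$ on $E$. Lemma \ref{prop:bramblehilbert} with $t=0,1$ and $p=q=2$ then gives
\begin{equation*}
\|\bv-\bb{q}\|_{\L^2(E)^2}+h_E|\bv-\bb{q}|_{\H^1(E)^2}\le Ch_E^s|\bv|_{\H^s(E)^2}.
\end{equation*}
Combining this with the stability estimate yields $\|\bv-\bb{I}_h^F\bv\|_{\L^2(E)^2}\le Ch_E^s|\bv|_{\H^s(E)^2}$. Squaring, summing over $E\in\O_h$, and using $h_E\le h$ gives the claimed global bound. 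The restriction $s\ge1$ is needed precisely because the edge DoFs require traces, and the upper limit $s\le k+1$ reflects the fact that $\bb{I}_h^F$ reproduces $\mathbb{P}_k(E)^2$.
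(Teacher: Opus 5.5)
The paper gives no proof of this proposition; it quotes \cite[Proposition 4.1]{10.1093/imanum/drad078}. Your route is the standard $h$-version argument: reproduction of $\mathbb{P}_k(E)^2$ by the local interpolant, a local bound $\|\bb{I}_h^F\bw\|_{\L^2(E)^2}\le C(\|\bw\|_{\L^2(E)^2}+h_E|\bw|_{\H^1(E)^2})$, then Bramble--Hilbert. For a fixed degree it is sound, apart from an index slip: $\mathcal{G}_{k-1}(E)=\nabla\mathbb{P}_k(E)$, so you integrate by parts against $\nabla q_k$, which yields $\div\bb{I}_h^F\bw=\Pi_k^{0,E}\div\bw$.

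However, it does not prove the proposition as stated, which claims a constant independent of $k$. Your constant depends on $k$ by construction, as you say yourself, and it does so in an uncontrolled way. The equivalence between the $\L^2(E)$ norm and the DoFs is a finite-dimensional scaling/compactness argument on a space whose dimension grows like $k^2$. The auxiliary estimates you need also carry unquantified $k$-dependence: the polynomial inverse inequalities, the bound $\|\bb{p}\|_{\L^2(E)^2}\lesssim h_E\|\rot\bb{p}\|_{\L^2(E)}$ on $\mathcal{G}_k(E)^\perp$ needed to exploit $\mathbf{D}_{\mathbf{V}}\mathbf{3}$, an inverse estimate for $\rot\bb{I}_h^F\bw$, and the Bramble--Hilbert constant for $\mathbb{P}_k$. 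A $k$-uniform bound is a genuinely $p$-version result. It needs $p$-explicit polynomial approximation and, above all, a $k$-robust stability estimate for the moment-based interpolant. A scaling argument that fixes $k$ from the outset cannot deliver that. This is exactly what the paper outsources to the cited work. Within your approach you must either supply such a $p$-explicit analysis or weaken the claim to $C=C(\varrho,k)$, which is all the rest of the paper (where $k$ is fixed) actually uses.

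Separately, you use \textbf{A3}) to bound the number of edges, while the statement assumes only \textbf{A2}). This is easy to remove:
\begin{itemize}
\item $(\bb{I}_h^F\bw)\cdot\bn|_e$ is the $\L^2(e)$-projection of $\bw\cdot\bn$ onto $\mathbb{P}_k(e)$, so $\|(\bb{I}_h^F\bw)\cdot\bn\|_{\L^2(\partial E)}\le\|\bw\cdot\bn\|_{\L^2(\partial E)}$ no matter how many or how small the edges are.
\item Write $\bb{I}_h^F\bw=\nabla\phi+\curl\psi$ with $\psi\in\H_0^1(E)$ and $\phi$ of zero mean. The Poincar\'e and trace inequalities on the star-shaped $E$ then bound $\|\nabla\phi\|_{\L^2(E)^2}$ by $\|\bw\|_{\L^2(E)^2}+h_E|\bw|_{\H^1(E)^2}$.
\item Choose $\bb{p}\in\mathcal{G}_k(E)^\perp$ with $\rot\bb{p}=\rot\bb{I}_h^F\bw$, which is possible since $\rot:\mathcal{G}_k(E)^\perp\to\mathbb{P}_{k-1}(E)$ is an isomorphism. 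This gives $\|\curl\psi\|_{\L^2(E)^2}^2=\int_E\bb{p}\cdot(\bw-\nabla\phi)$.
\item This closes with $\|\bb{p}\|_{\L^2(E)^2}\lesssim h_E\|\rot\bb{p}\|_{\L^2(E)}$ and the bubble inverse estimate $\|\rot\bb{I}_h^F\bw\|_{\L^2(E)}\lesssim h_E^{-1}\|\curl\psi\|_{\L^2(E)^2}$, using $\rot\bb{I}_h^F\bw=-\Delta\psi\in\mathbb{P}_{k-1}(E)$.
\end{itemize}
Working with these quantities directly, instead of with DoF vectors on a reference polygon, needs only \textbf{A2}). The constant still depends on $k$, through the rotational part.
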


\subsection{Finite element space for the pressure}

To approximate the pressure variable, we introduce the finite element space $\Q_h \coloneqq  \mathbb{P}_k(\O_h)\cap\Q$.
Also, we note that the pair $(\bVh,\Q_h)$ satisfies the following discrete inf-sup condition (\cite[Proposition 3.4]{MR3796371}): there exists $\tilde{\beta}>0$, independent of $h$,
such that
\begin{equation}\label{eq:infsupdiscreta}
\underset{\bv_h \in \bVh}{\sup} \; \dfrac{b(\bv_h,\textsf{q}_h)}{\|\bv_h\|_{\H(\div,\O)}} \geq \tilde{\beta} \|\textsf{q}_h\|_{\L^2(\O)} \quad \forall \textsf{q}_h \in \Q_h.
\end{equation}

\subsection{Virtual element space for the temperature}

Now, for an integer $k\geq 0$ and each $E \in \O_h$, we introduce the local virtual space for the temperature $\H_h^{k+1}(E)$, which is defined by
\begin{multline*}
\H_h^{k+1}(E)\coloneqq  \{S_h \in \H^{1}(E) \cap \mathcal{C}^{0}(\partial E) \colon \Delta S_h \in\mathbb{P}_{k+1}(E),\; S_h|_{e} \in \mathbb{P}_{k+1}(e) \; \forall e \in \partial E, \\
(S_h - \Pi^{\nabla,E}_{k+1} S_h, q_{k+1})_{\L^2(E)} = 0 \,\, \forall q_{k+1} \in \mathbb{P}_{k+1\setminus k-1}(E) \}.
\end{multline*}

Similarly to the virtual space for the velocity, we present properties of the space $\H_h^{k+1}(E)$:

\begin{itemize}
\item[\textbf{(P3)}] \textbf{Polynomial inclusion}: $\mathbb{P}_{k+1}(E) \subseteq \H_h^{k+1}(E)$;
\item[\textbf{(P4)}] \textbf{Degrees of freedom}: the following linear operators $D_V$ constitute a set of DoFs for the virtual element space $\H_h^{k+1}(E)$:
\begin{itemize}
\item[$D_V1$:] the values of $S_h$ at the vertices of $E$,
\item[$D_V2$:] the values of $S_h$ at $k$ distinct points of every edge $e \in \partial E$,
\item[$D_V3$:] the interior moments
\begin{equation*}
\dfrac{1}{|E|} \displaystyle \int_{E} S_hq_{k-1} \quad \forall q_{k-1} \in \mathbb{P}_{k-1}(E).
\end{equation*}
\end{itemize}
\end{itemize}
Then, from \cite[Proposition 2]{MR4567234}, the dimension of $\H_h^{k+1}(E)$:
\begin{equation*}
\text{dim}(\H_h^{k+1}(E)) = (\text{number of edges of E})(k+1) + \dfrac{k(k+1)}{2}.
\end{equation*}
\begin{remark}
According to \cite[Proposition 3.2]{MR3796371}, for each $E\in\O_h$ and for all $S_h\in\H_h^{k+1}(E)$, we can compute $\Pi_{k+1}^{\nabla,E}S_h$ exactly from the degrees of freedom $D_V$. 
\end{remark}
Let us  introduce the global virtual space $\H_h$ to approximate the temperature variable, which is defined by
\begin{equation*}
\H_h \coloneqq \{S_h \in \H: \, S_h|_{E} \in \H_h^{k+1}(E), \quad \forall E \in \O_h\}\cap\L^{\infty}(\O).
\end{equation*}
\begin{remark}
We observe that the standard virtual space to approximate the solution of the Laplacian problem is 
\begin{equation*}
\widetilde{\H}_h \coloneqq \{S_h \in \H: \, S_h|_{E} \in \H_h^{\ell}(E), \quad \forall E \in \O_h\},
\end{equation*}
where $\ell\geq 1$. However, it is not possible to ensure that $\H_h\subset\L^\infty(\O)$ as in the case of the finite element discretization presented in \cite[Section 3.1]{BernardiDH}. Then, since we need $\L^{\infty}$ regularity for the test function in the temperature equation, we consider the space $\H_h$ instead of $\widetilde{\H}_h$.
\end{remark}


\subsection{Discrete forms}

With the discrete spaces $\bV_h$, $\Q_h$, and $\H_h$ at hand, we introduce the discrete versions of the continuous forms involved in the weak problem \eqref{eq:DHW2}.
\\
\begin{itemize}[leftmargin=*]
\item The discrete counterpart of $a(\cdot;\cdot,\cdot)$ is defined by 
\begin{equation*}
a_h:\H_h\times\bV_h\times\bV_h \rightarrow \mathbb{R}, 
\qquad
a_h(X_h;\bv_h,\bw_h) \coloneqq  \ds 
\sum_{E\in\O_h} a_h^E(X_h;\bv_h,\bw_h),
\end{equation*}
where $a_h^{E}:\H_h^{k+1}(E)\times\bV_h^k(E)\times\bV_h^k(E) \rightarrow \mathbb{R}$ is given by
\begin{multline*}
a_h^E(X_h;\bv_h,\bw_h) \coloneqq  
\displaystyle 
\int_E \nu(\Pi_{k+1}^{0,E}X_h)\bb{\Pi}_k^{0,E}\bv_h\cdot\bb{\Pi}_k^{0,E}\bw_h 
\\
+ 
\nu(\Pi^{0,E}_0X_h)S_{1}^{E}((\mathbf{I}-\bPi^{0,E}_k)\bv_h,(\mathbf{I}-\bPi^{0,E}_k)\bw_h),
\end{multline*}
where $S_{1}^{E}:\bV_h^k(E)\times\bV_h^k(E) \rightarrow \mathbb{R}$ is a computable symmetric form that satisfies
\begin{equation}\label{eq:S_v}
\alpha_*\|\bv_h\|_{\L^2(E)}^2 \leq S_{1}^{E}(\bv_h,\bv_h) \leq \alpha^*\|\bv_h\|_{\L^2(E)}^2 \quad \forall \bv_h \in \bV_h^k(E) \cap \ker(\bPi^{0,E}_k),
\end{equation}
where $\alpha_*, \alpha^* >0$ are independent of $h_E$. 
\item The discrete counterpart of $\mathfrak{a}(\cdot;\cdot,\cdot)$ is defined by
\begin{equation}
\mathfrak{a}_h:\H_h\times\H_h\times\H_h \rightarrow \mathbb{R}, 
\qquad 
\mathfrak{a}_h(X_h;R_h,S_h) \coloneqq  \ds \sum_{E\in\O_h} \mathfrak{a}_h^E(X_h;R_h,S_h),
\label{eq:bilinear_form_fraka_h}
\end{equation}
where $\mathfrak{a}_h^{E}:\H_h^{k+1}(E)\times\H_h^{k+1}(E)\times\H_h^{k+1}(E) \rightarrow \mathbb{R}$ is given by
\begin{multline*}
\mathfrak{a}_h^E(X_h;R_h,S_h) \coloneqq \displaystyle \int_E \kappa(\Pi^{0,E}_{k+1}X_h)\bPi^{0,E}_{k}\nabla R_h\cdot \bPi^{0,E}_{k}\nabla S_h \\
+ \kappa(\Pi^{0,E}_0 X_h)S_{2}^{E}((\I-\Pi^{0,E}_{k+1})R_h,(\I-\Pi^{0,E}_{k+1})S_h),
\end{multline*}
and  $S_{2}^{E}:\H_h^{k+1}(E)\times\H_h^{k+1}(E) \rightarrow \mathbb{R}$ is a computable symmetric form that satisfies
\begin{equation}
\widetilde{\alpha}_*|X_h|_{\H^1(E)}^2 \leq S_{2}^{E}(X_h,X_h) \leq \widetilde{\alpha}^*|X_h|_{\H^1(E)}^2 \quad \forall X_h \in \H_h^{k+1}(E) \cap \ker(\Pi^{0,E}_k),
\label{eq:S_T}
\end{equation}
and $\widetilde{\alpha}_*,\widetilde{\alpha}^* >0$ are independent of $h_E$.
\item The discrete counterpart of $\mathfrak{c}(\cdot;\cdot,\cdot)$ is defined by
\begin{equation*}
\mathfrak{c}_{h}:\bV_h\times\H_h\times\H_h \rightarrow \mathbb{R}, 
\qquad 
\mathfrak{c}_{h}(\bv_h;R_h,S_h) \coloneqq \ds \sum_{E\in\O_h} \mathfrak{c}_{h}^{E}(\bv_h;R_h,S_h),
\end{equation*}
where $\mathfrak{c}_{h}^{E}:\bV_h^k(E)\times\H_h^{k+1}(E)\times\H_h^{k+1}(E) \rightarrow \mathbb{R}$ is given by
\begin{equation*}
\mathfrak{c}_{h}^E(\bv_h;R_h,S_h) \coloneqq \displaystyle \int_E (\bPi^{0,E}_k\bv_h \cdot \bPi^{0,E}_{k}\nabla R_h)\Pi^{0,E}_{k+1}S_h.
\end{equation*}
Then, we define 
\begin{equation*}
\mathfrak{c}_{h}^{\textit{skew},E}(\bv_h;R_h,S_h):=\displaystyle\frac{1}{2}(\mathfrak{c}_{h}^{E}(\bv_h;R_h,S_h)-\mathfrak{c}_{h}^{E}(\bv_h;S_h,R_h)),
\end{equation*}
and $\mathfrak{c}_{h}^{\textit{skew}}(\bv_h;R_h,S_h) := \ds \sum_{E\in\O_h} \mathfrak{c}_{h}^{\textit{skew},E}(\bv_h;R_h,S_h)$.
\item The discrete counterpart of $F(\cdot)$ is defined by
\begin{equation*}
F_h : \bV_h \longrightarrow \mathbb{R}, \qquad F_h(\bv_h) \coloneqq \ds \sum_{E\in\O_h} F_h^E(\bv_h),
\end{equation*}
where $F_h^E: \bV_h^k(E)\longrightarrow\mathbb{R}$ is defined by $F_h^E(\bv_h) \coloneqq \displaystyle\int_E \bb{f}\cdot\bPi_k^{0,E}\bv_h.$
\item The discrete counterpart of $G(\cdot)$ is defined by
\begin{equation*}
G_h: \H_h \longrightarrow \mathbb{R}, \qquad G_h(S_h) \coloneqq \ds \sum_{E\in\O_h} G_h^E(S_h),
\end{equation*}
where $G_h^E: \H_h^{k+1}(E)\longrightarrow\mathbb{R}$ is given by $G_h^E(S_h) \coloneqq \displaystyle\int_E g\Pi_{k+1}^{0,E}S_h$.
\end{itemize}

\begin{remark}
As mentioned in \cite{MR3626409,MR3796371}, we do not introduce any approximation of the bilinear form $b$. In fact, we note that $b(\bv_h,\textsf{q}_h)$ for $\bv_h \in \bV_h$ and $\textsf{q}_h\in \Q_h$ is computable from $\mathbf{D}_{\mathbf{V}}\mathbf{1}$,  $\mathbf{D}_{\mathbf{V}}\mathbf{2}$, $\mathbf{D}_{\mathbf{V}}\mathbf{4}$ since $\textsf{q}_h$ is polynomial on each element $E \in \O_h$.
\end{remark}
\begin{remark}
We observe that for $\bv\in\bV_0$ and $R,S\in\H\cap\L^\infty(\O)$, the integration by parts formula yields to
\begin{equation*}
\mathfrak{c}(\bv;R,S) = -\mathfrak{c}(\bv;S,R) \quad \text{and} \quad \mathfrak{c}(\bv;S,S)=0.
\end{equation*}
However, at the discrete level this property does not hold. Then, we approximate the form $\mathfrak{c}(\cdot;\cdot,\cdot)$ with the skew-symmetric form $\mathfrak{c}_h^{\textit{skew}}(\cdot;\cdot,\cdot)$. In fact, given $\bv_h\in\bV_h$ and $R_h,S_h\in\H_h$ we have
\begin{equation*}
\mathfrak{c}_h^\textit{skew}(\bv_h;R_h,S_h)=-\mathfrak{c}_h^\textit{skew}(\bv_h;S_h,R_h) \quad \text{and} \quad \mathfrak{c}_h^\textit{skew}(\bv_h;S_h,S_h)=0.
\end{equation*}
\end{remark}

\subsection{The virtual element method}
With all the ingredients and definitions introduced in the previous sections at hand, we finally are in position to introduce the virtual element discretization for the nonlinear system \eqref{eq:DHW2}. This discretization reads as follows:  find  $(\bu_h,\textsf{p}_h,T_h)\in \bV_h\times \Q_h\times\H_h$ such that
\begin{equation}\label{eq:DHW2discreto}
\left\{\begin{array}{rcll}
a_h(T_h;\bu_h,\bv_h) + b(\bv_h,\textsf{p}_h)&=& F_h(\bv_h), \quad &\forall \bv_h \in \bV_h,
\\
b(\bu_h,\textsf{q}_h) &=& 0, \quad &\forall \textsf{q}\in\Q_h,
\\
\mathfrak{a}_h(T_h;T_h,S_h)
+
\mathfrak{c}_h^{\textit{skew}}(\bu_h;T_h,S_h)
&=& G_h(S_h), \quad &\forall S_h\in\H_h.
\end{array}\right.
\end{equation}
%

\begin{remark}
We observe that, from the second equation of system \eqref{eq:DHW2discreto} and the definition of the space $\bV_h$ that the method preserves the zero divergence.
\end{remark}

To provide an analysis for the discrete problem \eqref{eq:DHW2discreto}, we first present some properties that the discrete forms satisfy:

\begin{itemize}[leftmargin=*]
\item For any $X_h \in \H_h$, $a_h(X_h;\cdot,\cdot)$ is a coercive and continuous bilinear form. More precisely,  there exist $\alpha_1,\alpha_2>0$ such that, for every $\bv_h,\bw_h \in \bV_h$ we have
\begin{equation}\label{eq:ah_bound}
\begin{split}
a_h(X_h;\bv_h,\bw_h) &\leq \alpha_1\nu^*\|\bv_h\|_{\L^2(\O)^2}\|\bw_h\|_{\L^2(\O)^2}, \\
a_h(X_h;\bv_h,\bv_h) &\geq \alpha_{2}\nu_{*}\|\bv_h\|_{\L^2(\O)^2}^2.
\end{split}
\end{equation}
\item For any $X_h \in \H_h$, $\mathfrak{a}_h(X_h;\cdot,\cdot)$ is a coercive and continuous bilinear form, i.e., there  exist $\widetilde{\alpha}_1>0$ and $\widetilde{\alpha}_2>0$ such that, for every $R_h, S_h \in \H_h$, we have
\begin{equation}\label{eq:ahtemp_bound}
\begin{split}
\mathfrak{a}_h(X_h;R_h,S_h) &\leq \widetilde{\alpha}_1\kappa^*|R_h|_{\H^1(\O)}|S_h|_{\H^1(\O)}, \\
\mathfrak{a}_h(X_h;S_h,S_h) &\geq \widetilde{\alpha}_2\kappa_{*}|S_h|_{\H^1(\O)}^2.
\end{split}
\end{equation}
\item $\mathfrak{c}_{h}(\cdot;\cdot,\cdot)$ is continuous: For every $\bv_h \in \bV_h$ and $R_h,S_h \in \H_h$, we have
\begin{equation}\label{eq:chtemp_bound}
\mathfrak{c}_h(\bv_h;R_h,S_h) \leq \mathfrak{C}\|\bv_h\|_{\L^2(\O)^2} |R_h |_{\H^1(\O)}\|S_h\|_{\L^{\infty}(\O)},
\end{equation}
\item $b(\cdot,\cdot)$ is continuous: For every $\textsf{q}_h \in \Q_h$ and $\bv_h \in \bV_h$, we have
\begin{equation}\label{eq:bh_bound}
b(\bv_h,\textsf{q}_h) \leq \|\bv_h\|_{\H(\div,\O)}\|\textsf{q}_h\|_{\L^2(\O)}.
\end{equation} 
\end{itemize}

\subsection{Existence and uniqueness of discrete solutions}
\label{sec:ex_uniq_sol_vem}
In what follows, we derive existence and uniqueness results for the discrete problem \eqref{eq:DHW2discreto}, which are inspired in the techniques used for the continuous problem. Moreover, we obtain a global uniqueness result when the problem data is suitably restricted.
First, we note that as in the continuous case, we split system \eqref{eq:DHW2discreto} as the following discrete reduced problem: Find $T_h\in\H_h$ such that 
\begin{equation*}\label{eq:reducedHeatdisc}
\mathfrak{a}_h(T_h;T_h,S_h)+\mathfrak{c}_h^{\textit{skew}}(\bu_h(T_h);T_h,S_h)=G_h(S_h) \quad \forall S_h\in\H_h,
\end{equation*}
where $\bu_h(T_h)$ is the solution of: Find $(\bu_h(T_h),\textsf{p}_h(T_h))\in\bV_h\times\Q_h$ such that 
\begin{equation}\label{eq:reducedDarcydisc}
\left\{\begin{array}{rlc}
a_h(T_h;\bu_h(T_h),\bv_h)+b(\bv_h,\textsf{p}_h(T_h))&=\displaystyle F_h(\bv_h), \quad \forall \bv_h\in\bV_h \\
b(\bu_h(T_h),\textsf{q}_h)&=0, \quad \forall \textsf{q}_h\in\Q_h.
\end{array}\right.
\end{equation}
We observe that given $T_h\in\H_h$, the inf-sup condition \eqref{eq:infsupdiscreta} implies that system \eqref{eq:reducedDarcydisc} has a unique solution $(\bu_h(T_h),\textsf{p}_h(T_h))\in\bV_h\times\Q_h$ and the following estimates hold
\begin{equation*}\label{eq:stabilityDarcydisc}
\|\bu_h(T_h)\|_{\L^2(\O)^2}\leq\dfrac{1}{\alpha_2\nu_*}\|\bb{f}\|_{\L^2(\O)^2}, \quad \|\sqrt{\nu(T_h)}\bu_h(T_h)\|_{\L^2(\O)^2}\leq\dfrac{1}{\alpha_2\sqrt{\nu_*}}\|\bb{f}\|_{\L^2(\O)^2}, \\
\end{equation*}
\begin{equation*}
\|\textsf{p}_h(T_h)\|_{\L^2(\O)} \leq \dfrac{1}{\tilde{\beta}}\left(1 + \dfrac{\alpha_1\nu^*}{\alpha_2\nu_*}\right)\|\bb{f}\|_{\L^2(\O)^2}.
\end{equation*}
On the other hand, since $\H_h\subset\L^{\infty}(\O)$, we can duplicate the arguments used to show existence of solutions of \eqref{eq:Heatseparable} to show the existence of at least one solution $T_h\in\H_h$ of \eqref{eq:reducedDarcydisc}, which satisfies the bound
\begin{equation*}
|T_h|_{\H^1(\O)} \leq \dfrac{\textsf{C}_2}{\widetilde{\alpha}_2\kappa_*}\|g\|_{\L^2(\O)}.
\end{equation*}

On the other hand, for the uniqueness, we use the same techniques as in the continuous case, but with the difference that the discrete velocity now is an element of the space $\H^1(\O_h)^2$ and the discrete version now involves different projections that must be properly estimated.

\begin{theorem}[uniqueness of discrete solutions]\label{eq:uniqueness} 
Let us assume that assumptions \textbf{A0}) and \textbf{A1}) hold. If the nonlinear system \eqref{eq:DHW2discreto} admits a solution $(\bu_h^1,\textsf{p}_h^1,T_h^1) \in \bZh \times \Q_h \times \H_h$ such that $\bu_h^1 \in \H^1(\O)^2$, $T_h^1 \in \W^{1,\infty}(\O)$ and
\begin{equation}
\label{eq:uniq}
\dfrac{\widetilde{\alpha}_2\kappa_* - \widetilde{\mathrm{C}}\kappa_{\text{lip}}\|\nabla T_h^1\|_{\L^\infty(\O)^2}}{C\mathfrak{C}\alpha_1^{-1}\nu_*^{-1}(\mathrm{C}_2+\mathrm{C}_{\frac{2r}{r-2}})\nu_{\text{lip}}\|\bu_h^1\|_{\H^1(\O)^2}\|T_h^1\|_{\L^\infty(\O)}}<1,
\end{equation}
for each $h>0$, then this solution is unique.
\label{thm:uniqueness}
\end{theorem}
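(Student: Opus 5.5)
We mimic the proof of Theorem \ref{eq:uniqcont} at the discrete level. Let $(\bu_h^2,\textsf{p}_h^2,T_h^2)\in\bZh\times\Q_h\times\H_h$ be a second solution of \eqref{eq:DHW2discreto} and set $(\overline{\bu}_h,\overline{\textsf{p}}_h,\overline{T}_h)\coloneqq(\bu_h^1-\bu_h^2,\textsf{p}_h^1-\textsf{p}_h^2,T_h^1-T_h^2)$. Subtracting the two discrete systems and restricting the velocity test functions to the discretely divergence-free subspace (on which $b(\cdot,\textsf{q}_h)$ vanishes, since the method is exactly divergence free) gives
\begin{equation*}
a_h(T_h^2;\overline{\bu}_h,\bv_h) = a_h(T_h^2;\bu_h^1,\bv_h)-a_h(T_h^1;\bu_h^1,\bv_h),
\end{equation*}
\begin{multline*}
\mathfrak{a}_h(T_h^2;\overline{T}_h,S_h)+\mathfrak{c}_h^{\textit{skew}}(\bu_h^2;\overline{T}_h,S_h) \\
= \mathfrak{a}_h(T_h^2;T_h^1,S_h)-\mathfrak{a}_h(T_h^1;T_h^1,S_h)-\mathfrak{c}_h^{\textit{skew}}(\overline{\bu}_h;T_h^1,S_h).
\end{multline*}
Since $\H_h\subset\L^\infty(\O)$ we may test directly with $S_h=\overline{T}_h$ (no truncation $\tau_\ell$ needed), and the skew-symmetry of $\mathfrak{c}_h^{\textit{skew}}$ kills the convective term on the left.

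\emph{Velocity step.} Test with $\bv_h=\overline{\bu}_h$ and use coercivity \eqref{eq:ah_bound}. The right-hand side has a consistency part, $\int_E(\nu(\Pi^{0,E}_{k+1}T_h^2)-\nu(\Pi^{0,E}_{k+1}T_h^1))\bPi^{0,E}_k\bu_h^1\cdot\bPi^{0,E}_k\overline{\bu}_h$, and a stabilization part with $\nu(\Pi_0^{0,E}T_h^2)-\nu(\Pi_0^{0,E}T_h^1)$. Using \textbf{A0}), H\"older with exponents $(\frac{2r}{r-2},r,2)$, Proposition \ref{prop:stabPi} for $\Pi^{0,E}_{k+1}$ in $\L^{2r/(r-2)}$ and for $\bPi^{0,E}_k$ in $\L^r$, and \eqref{eq:Poincare} bounds the first part by $C\nu_{\text{lip}}\textsf{C}_{\frac{2r}{r-2}}|\overline{T}_h|_{\H^1}\|\bu_h^1\|_{\L^r}\|\overline{\bu}_h\|_{\L^2}$. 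Since $\bu_h^1\in\H^1(\O)^2\hookrightarrow\L^r(\O)^2$, this becomes a bound in terms of $\|\bu_h^1\|_{\H^1(\O)^2}$. For the stabilization part we use \eqref{eq:S_v}, $|\Pi_0^{0,E}\overline{T}_h|\le |E|^{-1/2}\|\overline{T}_h\|_{\L^2(E)}$, and a local approximation bound $\|(\mathbf{I}-\bPi^{0,E}_k)\bu_h^1\|_{\L^2(E)}\le Ch_E|\bu_h^1|_{\H^1(E)}$ (Lemma \ref{prop:bramblehilbert}). Combined with $h_E|E|^{-1/2}\le C$ from \textbf{A2}), this yields a term of type $\textsf{C}_2|\overline{T}_h|_{\H^1}\|\bu_h^1\|_{\H^1}$. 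Altogether,
\begin{equation*}
\|\overline{\bu}_h\|_{\L^2}\le C\nu_*^{-1}(\textsf{C}_2+\textsf{C}_{\frac{2r}{r-2}})\nu_{\text{lip}}\|\bu_h^1\|_{\H^1}|\overline{T}_h|_{\H^1},
\end{equation*}
with $\alpha$-constants absorbed as in \eqref{eq:uniq}. This explains the appearance of $\textsf{C}_2+\textsf{C}_{\frac{2r}{r-2}}$ in that condition.

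\emph{Temperature step.} With $S_h=\overline{T}_h$, the left side is at least $\widetilde{\alpha}_2\kappa_*|\overline{T}_h|_{\H^1}^2$ by \eqref{eq:ahtemp_bound}. The $\kappa$-difference terms are handled like the velocity ones: \textbf{A1}) together with $\|\bPi^{0,E}_k\nabla T_h^1\|_{\L^\infty}\le C\|\nabla T_h^1\|_{\L^\infty}$ (Proposition \ref{prop:stabPi} with $p=\infty$) and $\L^2$-stability of $\Pi^{0,E}_{k+1}$ bound the consistency part. For the stabilization part, \eqref{eq:S_T} gives $|(\I-\Pi^{0,E}_{k+1})T_h^1|_{\H^1(E)}\le C|E|^{1/2}\|\nabla T_h^1\|_{\L^\infty(E)}$, which cancels the $|E|^{-1/2}$ coming from $\Pi_0^{0,E}$. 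Summing gives $\widetilde{\mathrm C}\kappa_{\text{lip}}\|\nabla T_h^1\|_{\L^\infty}|\overline{T}_h|_{\H^1}^2$. For the convective term we use skew-symmetry to write $\mathfrak{c}_h^{\textit{skew}}(\overline{\bu}_h;T_h^1,\overline{T}_h)=\tfrac12(\mathfrak{c}_h(\overline{\bu}_h;T_h^1,\overline{T}_h)-\mathfrak{c}_h(\overline{\bu}_h;\overline{T}_h,T_h^1))$. Each piece must be bounded by $\mathfrak{C}\|\overline{\bu}_h\|_{\L^2}|\overline{T}_h|_{\H^1}\|T_h^1\|_{\L^\infty}$: the second via \eqref{eq:chtemp_bound} directly, and the first via H\"older with $\|\bPi^{0,E}_k\nabla T_h^1\|_{\L^\infty}$ together with the discrete analogue of the integration-by-parts identity, or else by accepting $\|T_h^1\|_{\W^{1,\infty}}$.

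\emph{Conclusion.} We obtain
\begin{equation*}
(\widetilde{\alpha}_2\kappa_*-\widetilde{\mathrm C}\kappa_{\text{lip}}\|\nabla T_h^1\|_{\L^\infty})|\overline{T}_h|_{\H^1}\le \mathfrak{C}\|T_h^1\|_{\L^\infty}\|\overline{\bu}_h\|_{\L^2}.
\end{equation*}
Inserting the velocity bound, condition \eqref{eq:uniq} forces $\overline{T}_h=0$, hence $\overline{\bu}_h=\b0$, and then \eqref{eq:infsupdiscreta} gives $\overline{\textsf{p}}_h=0$.

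The main obstacle is the stabilization and projection terms, where $\nu,\kappa$ are evaluated at $\Pi_0^{0,E}$. The factor $|E|^{-1/2}$ must be compensated by an $h_E$ gained from approximation of $\bu_h^1$ or $T_h^1$; this is exactly why $\bu_h^1\in\H^1$ and $T_h^1\in\W^{1,\infty}$ are assumed. I would try the Bramble--Hilbert argument elementwise first. A second delicate point is bounding the non-integrated-by-parts half of $\mathfrak{c}_h^{\textit{skew}}$ with only $\|T_h^1\|_{\L^\infty}$ rather than $\|\nabla T_h^1\|_{\L^\infty}$.
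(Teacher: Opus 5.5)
Your route is the paper's. You use the same splitting of the $\nu$- and $\kappa$-differences into consistency and stabilization parts (the paper's $\Lambda_1,\Lambda_2$ and $\eta_1,\eta_2$), the same H\"older triple $(\tfrac{2r}{r-2},r,2)$, and the same compensation of the $|E|^{-1/2}$ from $\Pi_0^{0,E}$ by an $h_E$ gained from $\bu_h^1\in\H^1(\O)^2$ or $T_h^1\in\W^{1,\infty}(\O)$.

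\textbf{The convective step.} The step you left open is a genuine gap, and the paper does not close it either. The paper passes from $\mathfrak{c}_h^{\textit{skew}}(\overline{\bu}_h;T_h^1,\overline{T}_h)=-\mathfrak{c}_h^{\textit{skew}}(\overline{\bu}_h;\overline{T}_h,T_h^1)$ straight to $\mathfrak{C}\|\overline{\bu}_h\|_{\L^2(\O)^2}\|T_h^1\|_{\L^\infty(\O)}|\overline{T}_h|_{\H^1(\O)}$. This overlooks the half $\tfrac12\mathfrak{c}_h(\overline{\bu}_h;T_h^1,\overline{T}_h)$. For that half, \eqref{eq:chtemp_bound} puts the $\L^\infty$ norm on $\overline{T}_h$, which is not controlled by $|\overline{T}_h|_{\H^1(\O)}$ uniformly in $h$.

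Your first option is not available. $\mathfrak{c}_h$ satisfies no exact discrete integration-by-parts identity; that is why $\mathfrak{c}_h^{\textit{skew}}$ was introduced. Controlling the consistency error $\mathfrak{c}_h-\mathfrak{c}$ by standard projection estimates again brings in $\nabla T_h^1$.

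Your fallback does work. By Proposition \ref{prop:stabPi} with $p=\infty$ and \eqref{eq:Poincare},
\[
|\mathfrak{c}_h(\overline{\bu}_h;T_h^1,\overline{T}_h)|\le C\,\textsf{C}_2\,\|\overline{\bu}_h\|_{\L^2(\O)^2}\,\|\nabla T_h^1\|_{\L^\infty(\O)^2}\,|\overline{T}_h|_{\H^1(\O)}.
\]
This gives a complete proof with $\|T_h^1\|_{\L^\infty(\O)}$ in the denominator of \eqref{eq:uniq} replaced by $\|T_h^1\|_{\W^{1,\infty}(\O)}$. That change is harmless, since $T_h^1\in\W^{1,\infty}(\O)$ is assumed anyway. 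The version with the $\L^\infty$ norm alone is established neither by your argument nor by the paper's.

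\textbf{The final step.} Your conclusion does not follow from \eqref{eq:uniq} as printed. Let $A$ and $B$ be the numerator and denominator of the quotient in \eqref{eq:uniq}. Your estimates give $A|\overline{T}_h|_{\H^1(\O)}\le B|\overline{T}_h|_{\H^1(\O)}$. This forces $\overline{T}_h=0$ only when $A>B$, i.e.\ when the quotient is \emph{larger} than $1$, which is the small-data regime. If the quotient is $<1$, nothing follows; for instance, $A\le 0$ is then allowed. The paper carries the same reversed inequality, also in \eqref{eq:uniqDHW2}. You should impose the condition in the correct direction rather than quote \eqref{eq:uniq} verbatim.
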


\begin{proof}
Let $(\bu_h^2,\textsf{p}_h^2,T_h^2) \in \bZh\times\Q_h\times\H_h$ be another solution of system \eqref{eq:DHW2discreto} and define
\begin{equation*}
\overline{\bu}_h \coloneqq \bu_h^1-\bu_h^2 \in \bZh,
\qquad
\overline{T}_h \coloneqq T_h^1-T_h^2 \in \H_h,
\qquad
\overline{\textsf{p}}_h \coloneqq \textsf{p}_h^1-\textsf{p}_h^2 \in \Q_h.
\end{equation*}

First, we observe from the definition of $\mathfrak{c}_h^{\textit{skew}}(\cdot;\cdot,\cdot)$ that
\begin{equation}\label{eq:skew1}
\mathfrak{c}_h^{\textit{skew}}(\bu_h^2;\overline{T}_h,\overline{T}_h) = 0 \implies \mathfrak{c}_h^{\textit{skew}}(\bu_h^2;T_h^2,\overline{T}_h) = \mathfrak{c}_h^{\textit{skew}}(\bu_h^2;T_h^1,\overline{T}_h).
\end{equation}

Since  $(\bu_h^1,T_h^1)$ and $(\bu_h^2,T_h^2)$ are solutions of \eqref{eq:DHW2discreto}, we set $\bv_h = \mathbf{0}$ and $S_h = \overline{T}_h \in \H_h$ in the corresponding system and obtain
\begin{equation*}
\mathfrak{a}_h(T_h^1;T_h^1,\overline{T}_h) + \mathfrak{c}_h^{\textit{skew}}(\bu_h^1;T_h^1,\overline{T}_h) = \mathfrak{a}_h(T_h^2;T_h^2,\overline{T}_h) + \mathfrak{c}_h^{\textit{skew}}(\bu_h^2;T_h^2,\overline{T}_h).
\end{equation*}
We add and subtract the term $\mathfrak{a}_h(T_h^2;T_h^1,\overline{T}_h)$ and use \eqref{eq:skew1} to obtain
\begin{equation*}
\mathfrak{a}_h(T_h^2;\overline{T}_h,\overline{T\textit{T}}_h) = \left[\mathfrak{a}_h(T_h^2;T_h^1,\overline{T}_h)-\mathfrak{a}_h(T_h^1;T_h^1,\overline{T}_h)\right] + \mathfrak{c}_h^{\textit{skew}}(\overline{\bu}_h;T_h^1,\overline{T}_h).
\end{equation*}
On the other hand, since $\overline{T}_h\in\L^{\infty}(\O)$, using integration by parts we obtain 
\begin{equation*}
\mathfrak{c}_h^{\textit{skew}}(\overline{\bu}_h;T_h^1,\overline{T}_h) = -\mathfrak{c}_h^{\textit{skew}}(\overline{\bu}_h;\overline{T}_h,T_h^1),
\end{equation*}
which combined with the coercivity property in \eqref{eq:ahtemp_bound} and bound \eqref{eq:chtemp_bound} yield
\begin{multline}\label{temperatura}
\widetilde{\alpha}_2\kappa_{*}|\overline{T}_h|_{\H^1(\O)}^{2} \leq \left\lvert \mathfrak{a}_h(T_h^2;T_h^1,\overline{T}_h)-\mathfrak{a}_h(T_h^1;T_h^1,\overline{T}_h) \right\lvert \\
+ \mathfrak{C}\|\overline{\bu}_h\|_{\L^2(\O)^2}\|T_{h}\|_{\L^\infty(\O)}|\overline{T}_h|_{\H^1(\O)}.
\end{multline}
Now, using the definition of $\mathfrak{a}_h(\cdot;\cdot,\cdot)$ and triangle inequality we obtain
\begin{equation*}
\left\lvert \mathfrak{a}_h(T_h^2;T_h^1,\overline{T}_h)-\mathfrak{a}_h(T_h^1;T_h^1,\overline{T}_h) \right\lvert \leq \eta_1+\eta_2,
\end{equation*}
where $\eta_1$ and $\eta_2$ are defined by
\begin{align*}
\eta_1&\coloneqq\sum_{E\in\O_h} \int_E \lvert \kappa(\Pi_{k+1}^{0,E}T_h^1) - \kappa(\Pi_{k+1}^{0,E}T_h^2)\lvert\lvert\bb{\Pi}_{k}^{0,E}\nabla T_h^1\lvert\lvert\bb{\Pi}_{k}^{0,E}\nabla \overline{T}_h\lvert, \\
\eta_2&\coloneqq\sum_{E\in\O_h}\lvert \kappa(\Pi_{0}^{0,E}T_h^1) - \kappa(\Pi_{0}^{0,E}T_h^2)\lvert \lvert S_2^E((I-\Pi_{k+1}^{0,E})T_h^1,(I-\Pi_{k+1}^{0,E})\overline{T}_h)\lvert.
\end{align*}
The task now is to estimate each of these terms.

\emph{Estimate for $\eta_1$.} First, invoking assumption \textbf{A1}), H\"{o}lder inequality, and Proposition \ref{prop:stabPi} with $p=\infty$,  we obtain
\begin{align*}
\eta_1 &\leq \kappa_{\text{lip}}\sum_{E\in\O_h} \|\Pi_{k+1}^{0,E}\overline{T}_h\|_{\L^2(E)}\|\bb{\Pi}_{k}^{0,E}\nabla T_h^1\|_{\L^{\infty}(E)^2}\|\bb{\Pi}_{k}^{0,E}\nabla \overline{T}_h\|_{\L^2(E)} \\
&\leq \mathrm{C}_\infty\kappa_{\text{lip}}\sum_{E\in\O_h} \|\overline{T}_h\|_{\L^2(E)}\|\nabla T_h^1\|_{\L^{\infty}(E)^2}|\overline{T}_h|_{\H^1(E)} \\
&\leq \mathrm{C}_\infty\kappa_{\text{lip}}\|\overline{T}_h\|_{\L^2(\O)}\|\nabla T_h^1\|_{\L^{\infty}(\O)^2}|\overline{T}_h|_{\H^1(\O)} \leq \textsf{C}_2\mathrm{C}_\infty\kappa_{\text{lip}}|\overline{T}_h|_{\H^1(\O)}^2\|\nabla T_h^1\|_{\L^{\infty}(\O)^2}.
\end{align*}
\emph{Estimate for $\eta_2$.} Invoking once again assumption \textbf{A1}), the scaling property of $S_2^E(\cdot,\cdot)$, Lemma \ref{prop:bramblehilbert}, and the inverse estimate for polynomials \eqref{eq:inversepoly} with $q=\infty$ and $p=2$, we obtain
\begin{align*}
\eta_2 &\leq C\kappa_{\text{lip}}\sum_{E\in\O_h} h_E^{-1}\|\Pi_0^{0,E}\overline{T}_h\|_{\L^2(E)}|(I-\Pi_{k+1}^{0,E})T_h^1|_{\H^1(E)}|(I-\Pi_{k+1}^{0,E})\overline{T}_h|_{\H^1(E)} \\
&\leq C\kappa_{\text{lip}}\sum_{E\in\O_h} h_E^{-1}h_E\|\Pi_0^{0,E}\overline{T}_h\|_{\L^2(E)}\|\nabla T_h^1\|_{\L^\infty(E)^2}|\overline{T}_h|_{\H^1(E)} \\
&\leq C\kappa_{\text{lip}}\|\overline{T}_h\|_{\L^2(\O)}\|\nabla T_h^1\|_{\L^\infty(\O)^2}|\overline{T}_h|_{\H^1(\O)} \leq C\textsf{C}_2\kappa_\text{lip}|\overline{T}_h|_{\H^1(\O)}^2\|\nabla T_h^1\|_{\L^\infty(\O)^2}.
\end{align*}
Thus, we conclude that 
\begin{equation*}
\eta_1+\eta_2 \leq \widetilde{\mathrm{C}}\kappa_\text{lip}|\overline{T}_h|_{\H^1(\O)}^2\|\nabla T_h^1\|_{\L^\infty(\O)^2},
\end{equation*}
and therefore
\begin{equation*}
\widetilde{\alpha}_2\kappa_*|\overline{T}_h|_{\H^1(\O)}^2 \leq \widetilde{\mathrm{C}}\kappa_\text{lip}|\overline{T}_h|_{\H^1(\O)}^2\|\nabla T_h^1\|_{\L^\infty(\O)^2} + \mathfrak{C}\|\overline{\bu}_h\|_{\L^2(\O)^2}\|T_h\|_{\L^\infty(\O)}|\overline{T}_h|_{\H^1(\O)},
\end{equation*}
or, equivalently
\begin{equation}\label{eq:Thbarra}
(\widetilde{\alpha}_2\kappa_* - \widetilde{\mathrm{C}}\kappa_{\text{lip}}\|\nabla T_h\|_{\L^\infty(\O)^2})|\overline{T}_h|_{\H^1(\O)} \leq \mathfrak{C}\|\overline{\bu}_h\|_{\L^2(\O)^2}\|T_h\|_{\L^\infty(\O)}.
\end{equation}

Now the task is to obtain a bound for $\|\overline{\bu}_h\|_{\L^2(\O)^2}$. With this aim in mind, we use the fact that $(\bu_h^1,T_h^1)$ and $(\bu_h^2,T_h^2)$ solve system \eqref{eq:DHW2discreto} to set $\bv_h = \overline{\bu}_h \in \bV_h$ and $S_h=0\in\H_h$ in the corresponding systems and obtain $a_h(T_h^1;\bu_h^1,\overline{\bu}_h) = a_h(T_h^2;\bu_h^2,\overline{\bu}_h)$. 
Adding and subtracting the term $a_h(T_h^2;\bu_h^1,\overline{\bu}_h)$ we obtain
\begin{equation*}
a_h(T_h^2;\overline{\bu}_h,\overline{\bu}_h) = a_h(T_h^2;\bu_h^1,\overline{\bu}_h) - a_h(T_h^1;\bu_h^1,\overline{\bu}_h).
\end{equation*}
Then, invoking \eqref{eq:ah_bound} we have the following estimate
\begin{equation*}\label{uniq2}
\alpha_{1}\nu_*\|\overline{\bu}_h\|_{\L^2(\O)^2}^{2} \leq \lvert a_h(T_h^2;\bu_h^1,\overline{\bu}_h) - a_h(T_h^1;\bu_h^1,\overline{\bu}_h) \lvert \leq \Lambda_1+ \Lambda_2,
\end{equation*}
where $\Lambda_1$ and  $\Lambda_2$ are defined by
\begin{align*}
\Lambda_1&\coloneqq\sum_{E\in\O_h} \int_E \lvert \nu(\Pi_{k+1}^{0,E}T_h^1) - \nu(\Pi_{k+1}^{0,E}T_h^2)\lvert\lvert\bb{\Pi}_{k}^{0,E}\bu_h^1\cdot\bb{\Pi}_{k}^{0,E}\overline{\bu}_h\lvert, \\
\Lambda_2&\coloneqq\sum_{E\in\O_h}\lvert \nu(\Pi_{0}^{0,E}T_h^1) - \nu(\Pi_{0}^{0,E}T_h^2)\lvert \lvert S_1^E((\bb{I}-\bb{\Pi}_{k}^{0,E})\bu_h^1,(\bb{I}-\bb{\Pi}_{k}^{0,E})\overline{\bu}_h)\lvert.
\end{align*}
Similar arguments  for the estimates of $\eta_1$ and $\eta_2$ are needed to estimate  $\Lambda_1$ and $\Lambda_2$.

\emph{Estimate for $\Lambda_1$.} Invoking \textbf{A0}), using H\"{o}lder's  inequality for $r>2$, Proposition \ref{prop:stabPi}, and Poincar\'e inequality \eqref{eq:Poincare}, we arrive to
\begin{align*}
\Lambda_1 &\leq \nu_\text{lip}\sum_{E\in\O_h} \|\Pi_{k+1}^{0,E}\overline{T}_h\|_{\L^{\frac{2r}{r-2}}(E)}\|\bu_h^1\|_{\L^r(E)^2}\|\overline{\bu}_h\|_{\L^2(E)^2} \\
&\leq \nu_\text{lip}\sum_{E\in\O_h}\|\overline{T}_h\|_{\L^{\frac{2r}{r-2}}(E)}\|\bu_h^1\|_{\L^r(E)^2}\|\overline{\bu}_h\|_{\L^2(E)^2} \\
&\leq \nu_\text{lip}\|\overline{T}_h\|_{\L^{\frac{2r}{r-2}}(\O)}\|\bu_h^1\|_{\H^1(\O)^2}\|\overline{\bu}_h\|_{\L^2(\O)^2} \\
&\leq \textsf{C}_{\frac{2r}{r-2}}\nu_\text{lip}|\overline{T}_h|_{\H^1(\O)}\|\bu_h^1\|_{\H^1(\O)^2}\|\overline{\bu}_h\|_{\L^2(\O)^2}.
\end{align*}
\emph{Estimate for $\Lambda_2$}. Using assumption \textbf{A0}), the scaling property of $S_V^E(\cdot,\cdot)$, and Lemma \ref{prop:bramblehilbert}, we obtain
\begin{align*}
\Lambda_2&\leq C\nu_\text{lip}\sum_{E\in\O_h} h_E^{-1}\|\Pi_0^{0,E}\overline{T}_h\|_{\L^2(E)}\|(\bb{I}-\bb{\Pi}_{k}^{0,E})\bu_h^1\|_{\L^2(E)^2}\|(\bb{I}-\bb{\Pi}_{k}^{0,E})\overline{\bu}_h\|_{\L^2(E)^2} \\
&\leq C\nu_\text{lip}\sum_{E\in\O_h} \|\overline{T}_h\|_{\L^2(E)}\|\bu_h^1\|_{\H^1(E)^2}\|\overline{\bu}_h\|_{\L^2(E)^2} \\
&\leq C\nu_\text{lip} \|\overline{T}_h\|_{\L^2(\O)}\|\bu_h^1\|_{\H^1(\O)^2}\|\overline{\bu}_h\|_{\L^2(\O)^2} \\
&\leq C\textsf{C}_{2}\nu_\text{lip} |\overline{T}_h|_{\H^1(\O)}\|\bu_h^1\|_{\H^1(\O)^2}\|\overline{\bu}_h\|_{\L^2(\O)^2}.
\end{align*}
Thus, we obtain 
\begin{equation*}
\Lambda_1 + \Lambda_2 \leq C(\textsf{C}_2+\textsf{C}_{\frac{2r}{r-2}})\nu_{\text{lip}}|\overline{T}_h|_{\H^1(\O)}\|\bu_h^1\|_{\H^1(\O)^2}\|\overline{\bu}_h\|_{\L^2(\O)^2},
\end{equation*}
so we conclude that 
\begin{equation}\label{eq:uhbarra}
\|\overline{\bu}_h\|_{\L^2(\O)} \leq C\alpha_1^{-1}\nu_*^{-1}(\textsf{C}_2+\textsf{C}_{\frac{2r}{r-2}})\nu_{\text{lip}}|\overline{T}_h|_{\H^1(\O)}\|\bu_h^1\|_{\H^1(\O)^2}.
\end{equation}
Then, replacing \eqref{eq:uhbarra} into \eqref{eq:Thbarra} and using simple algebraic manipulations, we obtain
\begin{equation*}
\left(\dfrac{\widetilde{\alpha}_2\kappa_* - \widetilde{\mathrm{C}}\kappa_{\text{lip}}\|\nabla T_h\|_{\L^\infty(\O)^2}}{C\mathfrak{C}\alpha_1^{-1}\nu_*^{-1}(\textsf{C}_2+\textsf{C}_{\frac{2r}{r-2}})\nu_{\text{lip}}\|\bu_h^1\|_{\H^1(\O)^2}\|T_h\|_{\L^\infty(\O)}}-1\right)|\overline{T}_h|_{\H^1(\O)} \leq 0.
\end{equation*}
Therefore, assuming that \eqref{eq:uniq} holds, we deduce that $\overline{T}_h = 0$ so $T_h^1=T_h^2$. Replacing it into \eqref{eq:uhbarra} we obtain $\overline{\bu}_h=0$ so $\bu_h^1 = \bu_h^2$. Finally, using the inf-sup condition \eqref{eq:infsupdiscreta} we obtain consequently that $\overline{\textsf{p}} = 0$ so $\textsf{p}_h^1 = \textsf{p}_h^2$. This concludes the proof.
\end{proof}


\section{Error estimates}
\label{sec:error}
In the present section, we derive error estimates for the discrete system \eqref{eq:DHW2discreto}. In order to perform this analysis, we will make the following regularity assumptions:

\begin{itemize}
\item[\textbf{A4})] The solutions $(\bu,\textsf{p},T) \in \bV_0\times\Q\times\H$ and the data $\boldsymbol{f},g,\kappa(\cdot),\nu(\cdot)$ of system \eqref{eq:DHW2} satisfy the following regularity properties for some $1\leq s\leq k+1$:
\begin{itemize}
\item[i)] $\bu \in \H^{s}(\O)^2$, $\textsf{p} \in \H^{s}(\O)$, and $T\in \W^{1+s,p}(\O)\cap\W^{1,\infty}(\O)$ with $p>2$.
\item[ii)] $\bb{f} \in \H^{s}(\O)^2$ and $g \in \H^{1+s}(\O)$.
\item[iii)] $\nu(T)$ and $\kappa(T)$ belong to $\W^{s,\infty}(\O)$.
\end{itemize}
\end{itemize}

The following result provides the existence of a virtual interpolant for a function $S\in\W^{1+s,p}(\O)$, when  $s\geq 1$ and $p>2$ on the space $\bVV$. To prove this result, we take advantage of the proof of \cite[Proposition 4.2]{MR3340705}, which we adapt for our case.
\begin{lemma}[Virtual interpolant in $\W^{1,p}(\O)$, $p>2$]
\label{estimate2gen}
Under assumptions \textbf{A2}) and \textbf{A3}), let $S\in \H\cap\W^{1+s,p}(\O)$, with $1\leq s\leq k+1$ and $p>2$. Then, there exists $S_I\in \H_h$ such that
\begin{equation*}
\|S-S_I\|_{\L^p(\O)}+h|S-S_I|_{\W^{1,p}(\O)}\leq Ch^{s+1}|S|_{\W^{1+s,p}(\O)},
\end{equation*}
where $C>0$ is independent of $h$.
\end{lemma}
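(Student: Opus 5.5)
We follow the construction in \cite[Proposition 4.2]{MR3340705}: first build a piecewise polynomial approximation on a sub-triangulation, then lift it to $\H_h$ element by element using harmonic-type problems, and finally control the lifting with $\W^{1,p}$ estimates on each $E$.

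\emph{Step 1 (sub-triangulation and Clément/Scott--Zhang interpolant).} By \textbf{A2}) and \textbf{A3}), each $E\in\O_h$ can be split into triangles by joining its vertices to the center of the ball $B_{\textsf r}$. This produces a shape-regular triangulation $\mathcal{T}_h$ of $\O$ with mesh size comparable to $h$. On $\mathcal{T}_h$ we take the Scott--Zhang interpolant $S_\pi$ of degree $k+1$, which preserves the homogeneous boundary values. Standard $\W^{1,p}$ estimates, obtained from Lemma \ref{prop:bramblehilbert} applied on patches, give
\[
\|S-S_\pi\|_{\L^p(\O)}+h|S-S_\pi|_{\W^{1,p}(\O)}\le Ch^{s+1}|S|_{\W^{1+s,p}(\O)}.
\]

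\emph{Step 2 (definition of $S_I$).} On each $E$ we define $S_I\in\H_h^{k+1}(E)$ through the degrees of freedom: $S_I=S_\pi$ at the vertices and at the edge nodes of $D_V1$ and $D_V2$, so that $S_I|_{\partial E}=S_\pi|_{\partial E}$, and the moments $D_V3$ of $S_I$ coincide with those of $S$. Equivalently, $S_I|_E$ solves $\Delta S_I=\Delta\Pi$-type data in $\mathbb{P}_{k+1}(E)$ with boundary trace $S_\pi$, subject to the enhancement constraint. Since the edge values are shared between neighbouring elements and vanish on $\Gamma$, we get $S_I\in\H$. Moreover, each local function is in $\H^1(E)\cap\mathcal{C}^0(\bar E)$, since it is bounded by the maximum principle up to polynomial source terms, so $S_I\in\L^\infty(\O)$ and hence $S_I\in\H_h$.

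\emph{Step 3 (error of the lifting).} We write $S-S_I=(S-S_\pi)+(S_\pi-S_I)$. The difference $w:=S_\pi-S_I$ vanishes on $\partial E$ and satisfies a Poisson problem on $E$ whose right-hand side is a polynomial determined by $\Delta S_\pi$ (elementwise on the sub-triangles) and by the moment mismatch. Testing against polynomials and using the definition of the moments, we bound the data by $\|\nabla(S-S_\pi)\|$ plus a best-approximation term $\inf_{q}|S-q|_{\W^{1,p}(E)}$. The core estimate then reads $|w|_{\W^{1,p}(E)}\le C\big(|S-S_\pi|_{\W^{1,p}(E)}+\inf_{q\in\mathbb{P}_{k+1}(E)}|S-q|_{\W^{1,p}(E)}\big)$. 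The $\L^p$ part follows from Poincaré on $E$ for functions vanishing on $\partial E$, scaled by $h_E$. Summing over $E$ and applying Lemma \ref{prop:bramblehilbert} with $t=1$ gives the stated estimate.

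\emph{Main obstacle.} The difficulty is the $\W^{1,p}$ stability ($p>2$) of the local Dirichlet solver on a polygon $E$ with constant independent of $h_E$. For $p$ close to $2$ this follows from Meyers/Gröger regularity on Lipschitz domains with uniform constants, using the uniform shape of the scaled elements given by \textbf{A2})–\textbf{A3}). We would first argue on the reference-scaled element $\hat E=E/h_E$. The set of admissible $\hat E$ is compact in a uniform Lipschitz class, so the Gröger exponent and constant are uniform, and scaling back gives the $h_E$-powers matching Lemma \ref{prop:bramblehilbert}. If only some $p\in(2,p_0)$ is covered this way, we would restrict $p$ accordingly, as \cite{MR3340705} implicitly does.
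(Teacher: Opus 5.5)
Your skeleton is the paper's: sub-triangulate each $E$ from the center of the ball in \textbf{A2}), take a continuous degree-$(k+1)$ piecewise polynomial interpolant on it, lift it into $\H_h^{k+1}(E)$ with that trace, and finish with the triangle inequality and a scaled Poincar\'e inequality. Using Scott--Zhang instead of the paper's Lagrange $S_L$ is immaterial.

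The lifting is where the two proofs differ. The paper keeps two auxiliary functions, $S_\pi=\Pi^0_{k+1}S$ and $S_L$. It sets $-\Delta S_I=-\Delta S_\pi$ in $E$ and $S_I=S_L$ on $\partial E$, so $S_\pi-S_I$ is harmonic. It then claims $|S_\pi-S_I|_{\W^{1,p}(E)}\le|S_\pi-S_L|_{\W^{1,p}(E)}$ by the Dirichlet principle. The harmonic extension minimizes the $\H^1$ seminorm, not the $\W^{1,p}$ one, so that step is only justified for $p=2$. For $p>2$ it needs exactly the $h_E$-uniform $\W^{1,p}$ regularity of the Dirichlet Laplacian that you single out as the main obstacle.

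There is a second difference. The paper's $S_I$ is not forced to satisfy the enhancement constraint $(S_I-\Pi^{\nabla,E}_{k+1}S_I,q)_{\L^2(E)}=0$ for $q\in\mathbb{P}_{k+1\setminus k-1}(E)$, so it need not lie in $\H_h^{k+1}(E)$. Your DoF definition does land in that space. The paper's choice buys a harmonic difference with no polynomial data to control. Yours buys membership in the enhanced space and an honest account of the regularity required.

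The genuine gap in your proposal is Step 3. The data for $w=S_\pi-S_I$ are not a polynomial. Your $S_\pi$ is only piecewise polynomial on the sub-triangles, so its distributional Laplacian carries the normal-derivative jumps across interior edges. Work in $\W^{-1,p}(E)$ instead:
\begin{itemize}
\item For any $q\in\mathbb{P}_{k+1}(E)$, write $-\Delta w=-\Delta(S_\pi-q)+(\Delta S_I-\Delta q)$.
\item Use $\|\Delta(S_\pi-q)\|_{\W^{-1,p}(E)}\le|S_\pi-q|_{\W^{1,p}(E)}$.
\item Then bound the polynomial $\Delta S_I-\Delta q\in\mathbb{P}_{k+1}(E)$. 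This is the content you only assert.
\end{itemize}
Bounding $\Delta S_I-\Delta q$ uses $D_V3$ and the enhancement constraint. The degree-$k$ and $k+1$ moments of $S_I$ are those of $\Pi^{\nabla,E}_{k+1}S_I$, which is built from $S_\pi|_{\partial E}$, so you need a trace inequality for $S-S_\pi$. It also needs a scaled norm equivalence on $\mathbb{P}_{k+1}(E)$. Your best-approximation term must include $h_E^{-1}\|S-q\|_{\L^p(E)}$ as well.

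For the regularity, compactness of the scaled elements does not by itself give a uniform Gr\"oger exponent. A cleaner route is to map the scaled element, star-shaped with respect to a ball of radius $\ge\varrho$, radially and bi-Lipschitz onto a disk. This turns $-\Delta$ into a uniformly elliptic divergence-form operator, and Meyers' estimate then yields a range $p\in(2,p_0(\varrho))$.

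Your fallback of restricting $p$ is not a weakness peculiar to your route. On nonconvex elements admitted by \textbf{A2})--\textbf{A3}), functions of $\H_h^{k+1}(E)$ generically behave like $r^{\pi/\omega}$ at a reentrant corner of angle $\omega$. Their gradient is not in $\L^p$ for $p\ge 2\omega/(\omega-\pi)$. So neither lifting, yours or the paper's, can give the stated estimate for every $p>2$ on such meshes.
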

\begin{proof}
Let $S\in\W^{1+s,p}(\O)$ and let $S_{\pi}=\Pi_{k+1}^0S$. Given a polygon $E\in\O_h$, we consider the triangulation $\O_h^E$ obtained by joining each vertex of the element $E$ with the center of the ball for which $E$ is starred. Next, we define $\widetilde{\O}_h:=\cup_{E\in\O_h}\O_h^E$ which corresponds to a shape-regular family of triangulations of $\O$. On the other hand, since $s\geq 1$, we have the inclusion $\W^{1+s,p}(\O)\hookrightarrow \mathcal{C}(\overline{\O})$, which allows us to use  the Lagrange interpolant of degree $k+1$ of $S$ over $\widetilde{\CT}_h$ that we denote by $S_{L}$. Let us recall the following classic estimate (\cite{M2AN_1975}): $$\|S-S_L\|_{\L^p(\O)} + h|S-S_L|_{\W^{1,p}(\O)} \leq Ch^{1+s}|S|_{\W^{1+s,p}(\O)}.$$ Now, for $E\in\O_h$, we define $S_{I}|_E\in\H^1(E)$ as the solution of the following local problem: $$-\Delta S_I = -\Delta S_{\pi} \quad \text{on}\; E, \quad S_I=S_L \quad \text{on} \; \partial E.$$ We observe that $S_I\in\H_{h}^{k+1}(E)$. Then, the above problem can be rewritten as $$-\Delta (S_\pi-S_I) = 0 \quad \text{on}\; E, \quad S_\pi-S_I=S_\pi-S_L \quad \text{on} \; \partial E.$$ Then, we have 
\begin{align*}
|S_\pi-S_I|_{\W^{1,p}(E)} &= \inf\{|R|_{\H^1(E)} : R\in\H^1(E) \; \text{and} \; R=S_\pi-S_L \; \text{on} \; \partial E\} \\
&\leq |S_\pi-S_L|_{\W^{1,p}(E)}.
\end{align*}
Therefore, using triangle inequality and the above inequality, together with error estimates for the Lagrange interpolant, we obtain
\begin{align*}
|S-S_I|_{\W^{1,p}(E)} &\leq |S-S_\pi|_{\W^{1,p}(E)}+|S_\pi-S_I|_{\W^{1,p}(E)} \\ 
&\leq 2|S-S_\pi|_{\W^{1,p}(E)} + |S-S_L|_{\W^{1,p}(E)} \leq Ch_E^{s}|S|_{\W^{1+s,p}(E)}.
\end{align*}
On the other hand, each triangle $T\in\O_h^E$ has one edge on $\partial E$, and since $S_I=S_L$ on $\partial E$, scaling arguments and Poincar\'e inequality yield $$\|S_L-S_I\|_{\L^p(T)} \leq Ch_E|S_L-S_I|_{\W^{1,p}(T)}.$$ Therefore, combining triangle inequality with the above estimates we obtain
\begin{align*}
\|S-S_I\|_{\L^p(E)} &\leq \|S-S_L\|_{\L^p(E)} + \|S_L-S_I\|_{\L^p(E)} \\
&\leq \|S-S_L\|_{\L^p(E)} + Ch_E|S_L-S_I|_{\W^{1,p}(E)} \\
&\leq \|S-S_L\|_{\L^p(E)} + Ch_E|S-S_L|_{\W^{1,p}(E)} + Ch_E|S-S_I|_{\W^{1,p}(E)} \\
&\leq Ch_E^{1+s}|S|_{\W^{1+s,p}(E)}.
\end{align*}
The proof is concluded by summing over all polygons $E\in\O_h$.
\end{proof}

We end this section by proving error estimates for our method. The result follows under the regularity assumptions given before and  smallness assumption on the data. The following lemmas are needed to prove our final result. 
\begin{lemma}\label{Lema1temp}
Let us suppose that \textbf{A1}), \textbf{A2}), \textbf{A3}), and \textbf{A4}) hold. Assume the smallness assumptions given by Theorems \ref{eq:uniqcont} and \ref{eq:uniqueness}. Let $(\bu,\textsf{p},T) \in \bV_0 \times \Q \times \H$ and $(\bu_h,\textsf{p}_h,T_h) \in \bV_h\times\Q_h\times\H_h$ be the unique solutions of problems \eqref{eq:DHW2} and \eqref{eq:DHW2discreto}, respectively, and let $T_I$ and $\bu_I$ be the interpolants of $T\in\H_h$ and $\bu\in\bV_h$ given by Lemmas \ref{estimate2gen}, then we have
\begin{equation*}
\left(\widetilde{\alpha}_2\kappa_* - \mathfrak{A}(T)\right)|E_h|_{\H^1(\O)} \leq \mathfrak{A}(\bu,T,\bb{f},g,\kappa)h + \mathfrak{A}(T)\|\bb{e}_h\|_{\L^2(\O)^2},
\end{equation*}
\end{lemma}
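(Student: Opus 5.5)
We interpret $E_h\coloneqq T_I-T_h\in\H_h$ and $\bb{e}_h\coloneqq \bu_I-\bu_h\in\bV_h$, with $T_I$ the interpolant of Lemma \ref{estimate2gen} and $\bu_I\coloneqq\bb{I}_h^F\bu$. The plan is a Strang-type argument on the discrete temperature equation, tested with $S_h=E_h$. We use the coercivity \eqref{eq:ahtemp_bound} of $\mathfrak{a}_h(T_h;\cdot,\cdot)$ and the identity $\mathfrak{c}_h^{\textit{skew}}(\bu_h;E_h,E_h)=0$. This gives
\begin{equation*}
\widetilde{\alpha}_2\kappa_*|E_h|_{\H^1(\O)}^2\le \mathfrak{a}_h(T_h;T_I,E_h)+\mathfrak{c}_h^{\textit{skew}}(\bu_h;T_I,E_h)-G_h(E_h).
\end{equation*}
We then insert the continuous equation \eqref{eq:DHW2} tested with $S=E_h\in\H\cap\L^\infty(\O)$, where $\mathfrak{c}(\bu;T,E_h)$ may be replaced by $\frac12(\mathfrak{c}(\bu;T,E_h)-\mathfrak{c}(\bu;E_h,T))$ since $\div\bu=0$ and $T\in\W^{1,\infty}(\O)$. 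The right-hand side then splits into four groups:
\begin{enumerate}
\item[(i)] $\mathfrak{a}_h(T_h;T_I,E_h)-\mathfrak{a}_h(T;T_I,E_h)$, the nonlinear coefficient mismatch;
\item[(ii)] $\mathfrak{a}_h(T;T_I,E_h)-\mathfrak{a}(T;T,E_h)$, the consistency error;
\item[(iii)] $\mathfrak{c}_h^{\textit{skew}}(\bu_h;T_I,E_h)-\mathfrak{c}^{\textit{skew}}(\bu;T,E_h)$, the convection terms;
\item[(iv)] $G(E_h)-G_h(E_h)$, the load error.
\end{enumerate}

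\emph{Group (i).} This is handled as $\eta_1,\eta_2$ in the proof of Theorem \ref{thm:uniqueness}: Lipschitz continuity \textbf{A1}), Proposition \ref{prop:stabPi} with $p=\infty$, and the stability of $S_2^E$. The result is a bound by $C\kappa_{\text{lip}}\|\nabla T\|_{\L^\infty}\|T-T_h\|_{\L^2}|E_h|_{\H^1}$, plus a term involving $\|\nabla(T_I-T)\|$ that is $O(h)$. Writing $\|T-T_h\|_{\L^2}\le \textsf{C}_2(|T-T_I|_{\H^1}+|E_h|_{\H^1})$, the part proportional to $|E_h|^2_{\H^1}$ carries the factor $\mathfrak{A}(T)$ that is moved to the left-hand side; the rest is $O(h)$. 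We use $\|\nabla T_I\|_{\L^\infty}\le C\|\nabla T\|_{\L^\infty}$, which follows from Lemma \ref{estimate2gen} combined with an inverse estimate.

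\emph{Groups (ii) and (iv).} These are standard. We add and subtract $\Pi^0_{k+1}T$, $\bb{\Pi}^0_k\nabla T$ and $\kappa(T)$, and use \textbf{A4})iii), Lemma \ref{prop:bramblehilbert}, Lemma \ref{estimate2gen}, and the polynomial consistency of the stabilization. Each piece is bounded by $Ch|E_h|_{\H^1}$, with constants depending on $\|T\|_{\W^{1+s,p}}$, $\|\kappa(T)\|_{\W^{s,\infty}}$ and $\|g\|_{\H^{1+s}}$ (using $s\ge1$ and keeping only first order).

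\emph{Group (iii).} We split it as $\mathfrak{c}_h^{\textit{skew}}(\bu_h-\bu_I;T_I,E_h)$ plus consistency terms comparing $\mathfrak{c}_h^{\textit{skew}}(\bu_I;T_I,E_h)$ with $\mathfrak{c}^{\textit{skew}}(\bu;T,E_h)$. For the first term, the skew form is written out so that $T_I$ carries the $\L^\infty$ norm or the $\L^\infty$ gradient; using \eqref{eq:chtemp_bound} and its mirror with $\|\nabla T_I\|_{\L^\infty}$, it is bounded by $C(\|T\|_{\L^\infty}+\|\nabla T\|_{\L^\infty})\|\bb{e}_h\|_{\L^2}|E_h|_{\H^1}$. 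This produces the term $\mathfrak{A}(T)\|\bb{e}_h\|_{\L^2}$. The consistency terms use Proposition \ref{prop:errorinterp}, projection estimates, and Hölder's inequality with $\|E_h\|_{\L^q}\le \textsf{C}_q|E_h|_{\H^1}$ to avoid needing $\|E_h\|_{\L^\infty}$. This yields $Ch(\|\bu\|_{\H^1}+\|T\|_{\W^{2,p}})|E_h|_{\H^1}$.

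\emph{Conclusion and main obstacle.} Dividing by $|E_h|_{\H^1}$ gives the claim. The delicate step is group (iii): the discrete skew form is not an exact integration by parts, and we need $\L^\infty$ control of $T_I$, $\Pi_{k+1}^0T_I$ and $\bb{\Pi}_k^0\nabla T_I$ that is uniform in $h$, while $E_h$ appears only in $\L^2$ or $\L^q$ norms. This is obtained by combining Proposition \ref{prop:stabPi}, the $\W^{1,p}$ estimate of Lemma \ref{estimate2gen}, and inverse inequalities.
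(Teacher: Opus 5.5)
Your proposal is correct in outline and uses the same ingredients as the paper: testing with $E_h$, coercivity of $\mathfrak{a}_h(T_h;\cdot,\cdot)$, skew-symmetry, and inserting the continuous equation. The intermediate decomposition is different, however, and there is one misstated bound to correct.

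\emph{How the decompositions differ.} The paper never introduces the frozen form $\mathfrak{a}_h(T;T_I,E_h)$. It splits $\mathfrak{a}_h(T_h;T_I,E_h)-\mathfrak{a}(T;T,E_h)$ into $\mathrm{I}_1^E+\mathrm{I}_2^E-\mathrm{I}_3^E+\mathrm{I}_4^E$. The nonlinear mismatch $\mathrm{I}_2^E=\int_E(\kappa(\Pi^{0,E}_{k+1}T_h)-\kappa(T))\nabla T\cdot\bPi^{0,E}_k\nabla E_h$ acts on the \emph{exact} gradient and is handled by H\"older with exponents $\frac{2p}{p-2},p,2$. The stabilization coefficient $\kappa(\Pi^{0,E}_0T_h)$ in $\mathrm{I}_4^E$ is simply bounded by $\kappa^*$, so no $\eta_2$-type mismatch term appears.

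Likewise, for the convection the paper inserts $\bu$ and $T$ rather than $\bu_I$. It writes $\mathfrak{c}_h^{\textit{skew}}(\bu_h;T_I-T,E_h)+\mathfrak{c}_h^{\textit{skew}}(\bu_h-\bu;T,E_h)+[\mathfrak{c}_h^{\textit{skew}}(\bu;T,E_h)-\mathfrak{c}^{\textit{skew}}(\bu;T,E_h)]$. The $\mathrm{L}^\infty$ norms then land on $T\in\W^{1,\infty}(\O)$ or on small differences, and the $\|\bb{e}_h\|$ term comes from $\|\bu-\bu_h\|\le\|\bu-\bu_I\|+\|\bb{e}_h\|$.

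\emph{What each buys.} The paper's arrangement has fewer terms and needs no uniform $\mathrm{L}^\infty$ control of the virtual interpolant. Yours separates the nonlinearity from a frozen-coefficient consistency error and keeps the $\bb{e}_h$ term purely discrete. The price is an extra stabilization-coefficient mismatch and $h$-uniform $\mathrm{L}^\infty$ bounds on $\Pi^0_{k+1}T_I$ and $\bPi^0_k\nabla T_I$.

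\emph{One correction.} The bound $\|\nabla T_I\|_{\mathrm{L}^\infty(\O)^2}\le C\|\nabla T\|_{\mathrm{L}^\infty(\O)^2}$ does not follow from an inverse estimate, because $T_I$ is not piecewise polynomial. On non-convex elements $\nabla T_I$ is in general not even bounded near reentrant corners.

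Only the polynomial projections enter $\mathfrak{a}_h$ and $\mathfrak{c}_h$, so this is easily fixed. For these, \eqref{eq:inversepoly}, Proposition \ref{prop:stabPi} and the elementwise form of Lemma \ref{estimate2gen} give
\begin{equation*}
\|\bPi^{0,E}_k\nabla T_I\|_{\mathrm{L}^\infty(E)^2}\le C\|\nabla T\|_{\mathrm{L}^\infty(E)^2}+Ch_E^{-2/p}\|\nabla(T_I-T)\|_{\mathrm{L}^p(E)^2}\le C\|\nabla T\|_{\mathrm{L}^\infty(E)^2}+Ch_E^{s-2/p}|T|_{\W^{1+s,p}(E)}.
\end{equation*}
This is uniform in $h$ because $s\ge 1>2/p$. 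The same argument gives $\|\Pi^{0,E}_{k+1}T_I\|_{\mathrm{L}^\infty(E)}\le C\|T\|_{\mathrm{L}^\infty(E)}+Ch_E^{1+s-2/p}|T|_{\W^{1+s,p}(E)}$.

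With these bounds in place of the one you stated, your group (i) and group (iii) estimates go through. Your final paragraph in fact already identifies these projections as the quantities you need.
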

where $E_h \coloneqq T_I - T_h$ and $\bb{e}_h\coloneqq \bu_I-\bu_h$.
\begin{proof}
First we invoke the coercivity bound \eqref{eq:ahtemp_bound}, the definition of $E_h$,  and adding  and subtracting $\mathfrak{a}(T;T,E_h)$, obtaining
\begin{equation*}
 \begin{aligned}
\label{eqtemp1}
\widetilde{\alpha}_{2}\kappa_{*}|E_h|_{1,\O}^{2} & \leq \mathfrak{a}_h(T_h;E_h,E_h) = \mathfrak{a}_h(T_h;T_I,E_h) - \mathfrak{a}_h(T_h;T_h,E_h)
\\
& = \mathfrak{a}_h(T_h;T_I,E_h) - \mathfrak{a}(T;T,E_h) + \mathfrak{a}(T;T,E_h) - \mathfrak{a}_h(T_h;T_h,E_h).
\end{aligned}
\end{equation*}
We now use the third equations of the continuous and discrete systems, \eqref{eq:DHW2} and \eqref{eq:DHW2discreto}, respectively, to arrive at the following estimate:
\begin{multline}
 \widetilde{\alpha}_{2}\kappa_{*}|E_h|_{1,\O}^{2} \leq \left[\mathfrak{a}_h(T_h;T_I,E_h) - \mathfrak{a}(T;T,E_h)\right]
\\
+
\left[ \mathfrak{c}_h^{\textit{skew}}(\bu_h;T_h,E_h) - \mathfrak{c}(\bu;T,E_h)\right]
+ ( g-\Pi_k^0g,E_h)_{\L^2(\O)} =: \I + \I\I + \I\I\I.
\label{eq:I+II+III}
\end{multline}
Hence, we need to estimate the contributions $\I,\I\I$ and $\I\I\I$. To estimate $\I$, we first rewrite $\I$ using the definitions of $\mathfrak{a}(\cdot;\cdot,\cdot)$ and $\mathfrak{a}_h(\cdot;\cdot,\cdot)$ given in \eqref{eq:bilinear_form_fraka} and \eqref{eq:bilinear_form_fraka_h}, respectively, obtaining
\begin{multline*}
\I
=
\displaystyle
\sum_{E\in\O_h} \bigg\{ \int_{E} \kappa(\Pi^{0,E}_{k+1} T_h)
(\bPi^{0,E}_{k} \nabla T_I - \nabla T)
\cdot
\bPi^{0,E}_{k}\nabla E_h
-  \int_{E}\kappa(T)\nabla T\cdot\nabla E_h
\\
+ \int_{E} \kappa(\Pi^{0,E}_{k+1}T_h)\nabla T \cdot \bPi^{0,E}_{k}\nabla E_h
+
\displaystyle
\kappa(\Pi^{0,E}_0 T_h) S_T^{E}((\I-\Pi^{0,E}_{k+1})T_I,(\I-\Pi^{0,E}_{k+1}) E_h)\bigg\}.
\end{multline*}
We construct further differences as follows:
\begin{equation*}
\begin{aligned}
\I
& = \displaystyle
\sum_{E\in\O_h}
\bigg\{
\int_{E} \kappa(\Pi^{0,E}_{k+1} T_h)(\bPi^{0,E}_{k} \nabla T_I - \nabla T)\cdot \bPi^{0,E}_{k}\nabla E_h
\\
& \quad + \displaystyle  \int_E
\left(
\kappa(\Pi^{0,E}_{k+1} T_h) - \kappa(T)
\right)\nabla T
\cdot \bPi^{0,E}_{k} \nabla E_h
-
\displaystyle  \int_E \kappa(T)\nabla T \cdot ( \nabla E_h - \bPi^{0,E}_{k} \nabla E_h)
\\
& \quad
+
\displaystyle \kappa(\Pi^{0,E}_0 T_h) S_T^{E}((\I-\Pi^{0,E}_{k+1})T_I,(\I-\Pi^{0,E}_{k+1}) E_h)
\bigg\}
=: \sum_{E\in\O_h} ( \I_1^E + \I_2^E - \I_3^E + \I_4^E).
\end{aligned}
\end{equation*}
Using the definition of $\bPi^{0,E}_{k}$, $\I_3^E$ can be rewritten as $\I_3^E = \int_E (\mathbf{I} - \bPi^{0,E}_{k})(\kappa(T)\nabla T) \cdot \nabla E_h$. Now, we observe that the terms $\I_1^E$, $\I_3^E$, and $\I_4^E$ can be controlled simultaneously under the assumption \textbf{A0}) and \eqref{eq:S_T}. In fact, we have
\begin{multline}
\label{eq:aux_temp_0}
 \sum_{E\in\O_h}  (\I_1^E - \I_3^E + \I_4^E)
 \leq C
 \sum_{E\in\O_h}
 \left( \kappa^{*}
 \| \bPi^{0,E}_{k} \nabla T_I - \nabla T \|_{\L^2(E)^2}
 \right.
 \\
 \left.
 +
 \| (\mathbf{I} - \bPi^{0,E}_{k})(\kappa(T)\nabla T) \|_{\L^2(E)^2}
 +
 \kappa^{*} |(\I - \Pi^{0,E}_{k+1})T_I |_{\H^1(E)}
 \right)
 |E_h |_{\H^1(E)},
\end{multline}
where we use the estimates $\| \bPi^{0,E}_{k} \nabla E_h \|_{\L^2(E)^2} \leq |E_h |_{\H^1(E)}$, $\| ( \I  - \Pi^{0,E}_{k+1}) E_h \|_{\L^2(E)} \leq \| E_h \|_{\L^2(E)}$ and $\| ( \I  - \bPi^{0,E}_{k+1}) \nabla E_h \|_{\L^2(E)} \leq |E_h|_{\H^1(E)}$. We now use triangle inequality, and Lemmas \ref{estimate2gen} and \ref{prop:bramblehilbert} to obtain
\begin{equation}
\begin{split}
 \| \bPi^{0,E}_{k} \nabla T_I - \nabla T \|_{\L^2(E)^2}
 &\leq
 |T - T_I|_{\H^1(E)}
 +
 \| (\mathbf{I} - \bPi^{0,E}_{k}) \nabla T \|_{\L^2(E)^2}
\\ 
 &\leq
C\left( |T - T_I\|_{\W^{1,p}(E)}
 +
 \| (\mathbf{I} - \bPi^{0,E}_{k}) \nabla T \|_{\L^p(E)^2}\right)
\\ 
&\leq Ch_E^s |T|_{\W^{1+s,p}(E)}.
\end{split}
 \label{eq:aux_temp_1}
\end{equation}
An estimate for $|(\I - \Pi^{0,E}_{k+1})T_I|_{\H^1(E)}$ can be derived in view of similar arguments:
\begin{equation}
\begin{split}
 | (\I - \Pi^{0,E}_{k+1})T_I |_{\H^1(E)}
 &\leq 
 |(\I - \Pi^{0,E}_{k+1}) (T_I-T)|_{\H^1(E)}
 +
 |(\I - \Pi^{0,E}_{k+1}) T|_{\H^1(E)}
 \\
 &\leq
 |T_I-T|_{\H^1(E)}
 +
|(\I - \Pi^{0,E}_{k+1}) T |_{\H^1(E)} 
\\
&\leq C\left(
 |T_I-T|_{\W^{1,p}(E)}
 +
|(\I - \Pi^{0,E}_{k+1}) T |_{\W^{1,p}(E)} \right) \\
&\leq Ch_E^s |T|_{\W^{1+s,p}(E)}.
\label{eq:aux_temp_2}
\end{split}
\end{equation}
Finally, we bound $\| (\mathbf{I} - \bPi^{0,E}_{k})(\kappa(T)\nabla T) \|_{\H^1(E)^2}$ as follows:
\begin{equation}
\begin{split}
\| (\mathbf{I} - \bPi^{0,E}_{k})(\kappa(T)\nabla T) \|_{\H^1(E)} 
&\leq C\|(\mathbf{I} - \bPi^{0,E}_{k})(\kappa(T)\nabla T) \|_{\W^{1,p}(E)^2} \\
&\leq
Ch_E^s | \kappa(T)\nabla T |_{\W^s(E)} \\
&\leq
Ch_E^s \| \kappa(T) \|_{\W^{s,\infty}(E)} | T |_{\W^{s+1,p}(E)}.
\label{eq:aux_temp_3}
\end{split}
\end{equation}
If we substitute the estimates \eqref{eq:aux_temp_1}, \eqref{eq:aux_temp_2}, and \eqref{eq:aux_temp_3} into \eqref{eq:aux_temp_0} we arrive at the bound
\begin{equation}
\label{eq:aux_temp_4}
 \sum_{E\in\O_h}  (\I_1^E - \I_3^E + \I_4^E)
 \leq \mathfrak{A}(\kappa,T) h^s |E_h|_{\H^1(\O)},
 \end{equation}
 where $\mathfrak{A} = \mathfrak{A}(\kappa,T)$ is a positive constant depending on $\kappa$ and $T$.
We now turn to the derivation of a bound for $\I_2^E$. For this purpose, we invoke H\"{o}lder's inequality and the stability of the $\L^2(E)$-projection in $\L^2(E)$ and $\L^p(E)$ with $p>2$ to obtain
 \begin{equation}
 \begin{split}
\I_2^E
&\leq
\kappa_{lip} \|T-\Pi^{0,E}_{k+1}T_h\|_{\L^{\frac{2p}{p-2}}(E)}|T|_{\W^{1,p}(E)}|E_h|_{\H^1(E)} \\
&\leq
\kappa_{lip} \left( \|T- \Pi^{0,E}_{k+1}T \|_{\L^{\frac{2p}{p-2}}(E)} + \| \Pi^{0,E}_{k+1}(T - T_h) \|_{\L^{\frac{2p}{p-2}}(E)}\right)|T|_{\W^{1,p}(E)}|E_h|_{\H^1(E)} \\
&\leq 
\kappa_{lip}
\left( \|T- \Pi^{0,E}_{k+1}T \|_{\L^{\frac{2p}{p-2}}(E)} + \| T - T_h \|_{\L^{\frac{2p}{p-2}}(E)}\right)|T|_{\W^{1,p}(E)}|E_h|_{\H^1(E)}.
\label{eq:aux_temp_5}
\end{split}
\end{equation}
Then, invoking Lemma \ref{prop:bramblehilbert} and \eqref{eq:embedding} it follows that
\begin{equation*}
\|T- \Pi^{0,E}_{k+1}T \|_{\L^{\frac{2p}{p-2}}(E)} \leq Ch_E^{1+s} |T|_{\W^{1+s,p}(E)}.
\end{equation*}
On the other hand, we have $\| T - T_h \|_{\L^\frac{2p}{p-2}(E)} \leq \| T - T_I \|_{\L^\frac{2p}{p-2}(E)} + \| T_I - T_h \|_{\L^\frac{2p}{p-2}(E)}$. Substituting these estimates into \eqref{eq:aux_temp_5} and using \eqref{eq:Poincare} we obtain
\[
\I_2^E \leq
C\kappa_{lip}
(
h_E^{1+s} |T|_{\W^{1+s,p}(E)} + | T - T_I |_{\W^{1,p}(E)} + | E_h |_{\H^1(E)}
)|T|_{\W^{1,p}(E)}|E_h|_{\H^1(E)},
\]
where we also used that $E_h = T_I - T_h$. Therefore, summing over all the elements in $E$ in $\O_h$ we deduce that 
\begin{equation*}
\I_2 \coloneqq \sum_{E\in\O_h} \I_2^E \leq
C\kappa_{lip}
(
h^s |T|_{\W^{1+s,p}(\Omega)} + | E_h |_{\H^1(\Omega)}
)|T|_{\W^{1,p}(\Omega)}|E_h|_{\H^1(\Omega)}.
\end{equation*}
Then, we obtain the estimate 
\begin{equation}
\I_2 \leq \mathfrak{A}(T)|E_h|^2_{\H^1(\O)} + \mathfrak{A}(\kappa,T)h^s|E_h|_{\H^1(\Omega)}.
\label{eq:aux_temp_6}
\end{equation} 
Finally, combining \eqref{eq:aux_temp_4} with \eqref{eq:aux_temp_6} we obtain
\begin{equation}
\label{eq:IT}
\I
=
\sum_{E\in\O_h}  (\I_1^E + \I_2^E - \I_3^E + \I_4^E) \leq
\mathfrak{A}(T)|E_h|^2_{\H^1(\Omega)} + \mathfrak{A}(\kappa,T)h^s|E_h|_{\H^1(\Omega)}.
\end{equation}
Let us now control $\I\I$ in \eqref{eq:I+II+III}. As a first step, we note that since $T\in\W^{1,p}(\Omega)$ for $p>2$ and in light of \eqref{eq:embedding}, we have $\mathfrak{c}(\bu,T;E_h) = \mathfrak{c}^{skew}(\bu,T;E_h)$ because $\bu \in \bZ$. We use this property, the fact that $\mathfrak{c}_{h}^{\textit{skew}}(\bu_h;T_h,E_h) = \mathfrak{c}_{h}^{\textit{skew}}(\bu_h;T_h -T_I,E_h) + \mathfrak{c}_{h}^{\textit{skew}}(\bu_h;T_I,E_h) = \mathfrak{c}_{h}^{\textit{skew}}(\bu_h;T_I,E_h)$, and add and subtract $\mathfrak{c}_{h}^{\textit{skew}}(\bu;T,E_h)$ to rewrite $\I\I$ as follows:
\begin{equation*}
 \I \I
 = \mathfrak{c}_{h}^{\textit{skew}}(\bu_h;T_I - T,E_h)
 +
 \mathfrak{c}_{h}^{\textit{skew}}(\bu_h - \bu;T,E_h)
 +
 \mathfrak{c}_{h}^{\textit{skew}}(\bu;T,E_h)
 -
 \mathfrak{c}^{\textit{skew}}(\bu;T,E_h).
\end{equation*}
Define $\I\I_1:= \mathfrak{c}_{h}^{\textit{skew}}(\bu_h;T_I - T,E_h)
 +
 \mathfrak{c}_{h}^{\textit{skew}}(\bu_h - \bu;T,E_h)$. To bound $\I\I_1$, we first use the definition of $\mathfrak{c}_h^{\textit{skew}}(\cdot;\cdot,\cdot)$, H\"{o}lder inequality, Lemma \ref{prop:stabPi}, \eqref{eq:inversepoly} and \eqref{eq:embedding} to obtain 
\begin{multline*}
\mathfrak{c}_{h}^{\textit{skew}}(\bu_h;T_I - T,E_h) = \dfrac{1}{2}\left(\mathfrak{c}_h(\bu_h;T_I-T,E_h) - \mathfrak{c}_h(\bu_h;E_h,T_I-T) \right) \\
\leq C\|\bu_h\|_{\L^2(\O)^2}\left(|(T-T_I)|_{\W^{1,p}(\O)}\|E_h\|_{\L^{\frac{2p}{p-2}}(\O)} + \|\bPi_k^0\nabla E_h\|_{\L^\infty(\O)^2}\|T-T_I\|_{\L^{2}(\O)}\right) \\
\leq Ch^s\|\bu_h\|_{\L^2(\O)^2}|T|_{\W^{1+s,p}(\O)}|E_h|_{\H^1(\O)} \\
\leq \dfrac{C}{\alpha_2\nu_*}h^s\|\bb{f}\|_{\L^2(\O)^2}|T|_{\W^{1+s,p}(\O)}|E_h|_{\H^1(\O)}.
\end{multline*}
In a similar way, we have
\begin{multline*}
\mathfrak{c}_{h}^{\textit{skew}}(\bu_h - \bu;T,E_h) = \dfrac{1}{2}\left(\mathfrak{c}_h(\bu_h-\bu;T,E_h) - \mathfrak{c}_h(\bu_h-\bu;E_h,T) \right) \\
\leq C\|\bu-\bu_h\|_{\L^2(\O)^2}\left(|T|_{\W^{1,\infty}(\O)}\|E_h\|_{\L^2(\O)} + \|\nabla E_h\|_{\L^2(\O)}\|T\|_{\L^{\infty}(\O)}\right) \\
\leq C\|\bu-\bu_h\|_{\L^2(\O)^2}\|T\|_{\W^{1,\infty}(\O)}|E_h|_{\H^1(\O)} \\
\leq Ch^s\|\bu\|_{\H^s(\O)^2}\|T\|_{\W^{1,\infty}(\O)}|E_h|_{\H^1(\O)} + C\|\bb{e}_h\|_{\L^2(\O)^2}\|T\|_{\W^{1,\infty}(\O)}|E_h|_{\H^1(\O)}.
\end{multline*}
Hence, we derive the following bound for $\textrm{II}_1$:
\begin{equation}
\label{eq:II_1}
\textrm{II}_1
\leq
\mathfrak{A}(T)\|\bb{e}_h\|_{\L^2(\O)^2}|E_h|_{\H^1(\O)} + \mathfrak{A}(\bu,T,\bb{f})h^s|E_h|_{\H^1(\O)}.
\end{equation}
Now, let us define $\mathrm{II}_2:=\mathfrak{c}_{h}^{\textit{skew}}(\bu;T,E_h) - \mathfrak{c}^{\textit{skew}}(\bu;T,E_h)$, and consider its local contributions $\I\I_2^E:= \mathfrak{c}_h^{\textit{skew},E}(\bu;T,E_h)-\mathfrak{c}^{\textit{skew},E}(\bu;T,E_h)$, for $E \in \O_h$. We note that
\begin{multline}
\I\I_2^{E}
=
\frac{1}{2} \left[
\int_E ( \bPi^{0,E}_k \bu \cdot \bPi^{0,E}_{k}\nabla T ) \Pi^{0,E}_{k+1} E_h
-
\int_E  ( \bu \cdot \nabla T ) E_h
\right]
\\
-
\frac{1}{2}
\left[
\int_E ( \bPi^{0,E}_k \bu \cdot \bPi^{0,E}_{k} \nabla E_h ) \Pi^{0,E}_{k+1} T
-
\int_E  ( \bu \cdot \nabla E_h ) T
\right]
=:
\frac{1}{2} \I\I_{2,a}^{E} - \frac{1}{2} \I\I_{2,b}^{E}.
\end{multline}
Then, we have 
\begin{multline*}
\sum_{E\in\O_h} \I\I_{2,a}^{E} = \sum_{E\in\O_h} \sum_{i=1}^2 \int_{E} \left[ \left(\Pi_k^{0,E}u_i \Pi_k^{0,E}\dfrac{\partial T}{\partial x_i}\right)\Pi_{k+1}^{0,E}E_h - \left(u_i\dfrac{\partial T}{\partial x_i}\right)E_h\right] \\
= \sum_{E\in\O_h} \sum_{i=1}^2 \int_{E} \left[ \left(\Pi_k^{0,E}u_i \Pi_k^{0,E}\dfrac{\partial T}{\partial x_i} - u_i\dfrac{\partial T}{\partial x_i}\right)\Pi_{k+1}^{0,E}E_h - \left(u_i\dfrac{\partial T}{\partial x_i}\right)(E_h - \Pi_{k+1}^{0,E}E_h)\right] \\
= \sum_{E\in\O_h} \sum_{i=1}^2 \left\lbrace \int_{E} \left(\Pi_k^{0,E}u_i - u_i\right)\dfrac{\partial T}{\partial x_i}\Pi_{k+1}^{0,E}E_h + \int_E \Pi_k^{0,E}u_i\left(\Pi_k^{0,E}\dfrac{\partial T}{\partial x_i} - \dfrac{\partial T}{\partial x_i}\right)\Pi_{k+1}^{0,E}E_h \right. \\
+ \left. \int_E\left(\Pi_{k}^{0,E}u_i - u_i\right)\dfrac{\partial T}{\partial x_i}(E_h - \Pi_{k+1}^{0,E}E_h) \right. \\ 
\left. + \int_E \Pi_k^{0,E}u_i\left(\Pi_1^{0,E}\dfrac{\partial T}{\partial x_i} - \dfrac{\partial T}{\partial x_i}\right)(E_h-\Pi_{k+1}^{0,E}E_h) \right\rbrace.
\end{multline*}
Next, using H\"{o}lder inequality and Lemmas \ref{prop:bramblehilbert} and \ref{prop:stabPi}, we obtain the following bounds
{\small \begin{equation}\label{1a}
\begin{split}
\int_{E} \left(\Pi_k^{0,E}u_i - u_i\right)\dfrac{\partial T}{\partial x_i}\Pi_{k+1}^{0,E}E_h &\leq Ch_E^s\|u_i\|_{\H^s(E)}\left\|\dfrac{\partial T}{\partial x_i}\right\|_{\L^p(E)}\|E_h\|_{\L^{\frac{2p}{p-2}}(E)}, \\
\int_E \Pi_k^{0,E}u_i\left(\Pi_k^{0,E}\dfrac{\partial T}{\partial x_i} - \dfrac{\partial T}{\partial x_i}\right)\Pi_{k+1}^{0,E}E_h &\leq Ch_E^s\|u_i\|_{\L^2(E)}\left\|\dfrac{\partial T}{\partial x_i}\right\|_{\W^{s,p}(E)}\|E_h\|_{\L^{\frac{2p}{p-2}}(E)}, \\
\int_E\left(\Pi_{k}^{0,E}u_i - u_i\right)\dfrac{\partial T}{\partial x_i}(E_h - \Pi_{k+1}^{0,E}E_h) &\leq Ch_E^s\|u_i\|_{\H^s(E)}\left\|\dfrac{\partial T}{\partial x_i}\right\|_{\L^p(E)}\|E_h\|_{\L^{\frac{2p}{p-2}}(E)}, \\
\int_E \Pi_k^{0,E}u_i\left(\Pi_1^{0,E}\dfrac{\partial T}{\partial x_i} - \dfrac{\partial T}{\partial x_i}\right)(E_h-\Pi_{k+1}^{0,E}E_h) &\leq Ch_E\|u_i\|_{\L^2(E)}\left\|\dfrac{\partial T}{\partial x_i}\right\|_{\W^{1,p}(E)}\|E_h\|_{\L^{\frac{2p}{p-2}}(E)}.
\end{split}
\end{equation}}
Hence, from \eqref{1a} and using \eqref{eq:Poincare}, we deduce that 
\begin{equation}\label{IIa}
\begin{split}
\sum_{E\in\O_h}\sum_{i=1}^2 \dfrac{1}{2}\I\I_{2,a}^E &\leq Ch(\|\bu\|_{\H^s(\O)^2}+\|\bb{f}\|_{\L^2(\O)^2})\|T\|_{\W^{1+s,p}(\O)}|E_h|_{\H^1(\O)} \\
&\leq \mathfrak{A}(\bu,T,\bb{f})h|E_h|_{\H^1(\O)}.
\end{split}
\end{equation}
In a similar way, we have 
\begin{multline*}
\sum_{E\in\O_h} \I\I_{2,b}^{E} = \sum_{E\in\O_h} \sum_{i=1}^2 \int_{E} \left[ \left(\Pi_k^{0,E}u_i \Pi_k^{0,E}\dfrac{\partial E_h}{\partial x_i}\right)\Pi_{k+1}^{0,E}T - \left(u_i\dfrac{\partial E_h}{\partial x_i}\right)T\right] \\
= \sum_{E\in\O_h} \sum_{i=1}^2 \int_{E} \left[ \left(\Pi_k^{0,E}u_i - u_i\right) \Pi_k^{0,E}\dfrac{\partial E_h}{\partial x_i}\Pi_{k+1}^{0,E}T - u_i\left(\dfrac{\partial E_h}{\partial x_i}T - \Pi_k^{0,E}\dfrac{\partial E_h}{\partial x_i}\Pi_{k+1}^{0,E}T\right)\right] \\
= \sum_{E\in\O_h} \sum_{i=1}^2 \left\lbrace \int_{E}  \left(\Pi_k^{0,E}u_i - u_i\right) \Pi_k^{0,E}\dfrac{\partial E_h}{\partial x_i}\Pi_{k+1}^{0,E}T + \int_{E} u_i\Pi_k^{0,E}\dfrac{\partial E_h}{\partial x_i}\left(\Pi_{k+1}^{0,E}T - T\right) \right. \\
\left. + \int_E u_i\left(\Pi_k^{0,E}\dfrac{\partial E_h}{\partial x_i}-\dfrac{\partial E_h}{\partial x_i}\right)T\right\rbrace \\
= \sum_{E\in\O_h} \sum_{i=1}^2 \left\lbrace \int_{E}  \left(\Pi_k^{0,E}u_i - u_i\right) \Pi_k^{0,E}\dfrac{\partial E_h}{\partial x_i}\Pi_{k+1}^{0,E}T + \int_{E} u_i\Pi_k^{0,E}\dfrac{\partial E_h}{\partial x_i}\left(\Pi_{k+1}^{0,E}T - T\right) \right. \\
\left. + \int_E \left(u_i-\Pi_k^{0,E}u_i\right)\left(\Pi_k^{0,E}\dfrac{\partial E_h}{\partial x_i}-\dfrac{\partial E_h}{\partial x_i}\right)T \right. \\
\left. + \int_E \Pi_k^{0,E}u_i\left(\Pi_k^{0,E}\dfrac{\partial E_h}{\partial x_i}-\dfrac{\partial E_h}{\partial x_i}\right)\left(T-\Pi_0^{0,E}T\right)\right\rbrace .
\end{multline*}
Invoking similar arguments to obtain the bounds in \eqref{1a}, we deduce that
{\small \begin{equation}\label{1b}
\begin{split}
\int_{E}  \left(\Pi_k^{0,E}u_i - u_i\right) \Pi_k^{0,E}\dfrac{\partial E_h}{\partial x_i}\Pi_{k+1}^{0,E}T &\leq Ch_E^s\|u_i\|_{\H^s(E)}\left\|\dfrac{\partial E_h}{\partial x_i}\right\|_{\L^2(E)}\|T\|_{\L^\infty(E)}, \\
\int_E u_i\Pi_k^{0,E}\dfrac{\partial E_h}{\partial x_i}\left(\Pi_{k+1}^{0,E}T - T\right) &\leq Ch_E^s\|u_i\|_{\L^2(E)}\left\|\dfrac{\partial E}{\partial x_i}\right\|_{\L^{2}(E)}\|T\|_{\H^{1+s}(E)}, \\
\int_E \left(u_i-\Pi_k^{0,E}u_i\right)\left(\Pi_k^{0,E}\dfrac{\partial E_h}{\partial x_i}-\dfrac{\partial E_h}{\partial x_i}\right)T &\leq Ch_E^s\|u_i\|_{\H^s(E)}\left\|\dfrac{\partial E}{\partial x_i}\right\|_{\L^2(E)}\|T\|_{\L^\infty(E)}, \\
\int_E \Pi_k^{0,E}u_i\left(\Pi_k^{0,E}\dfrac{\partial E_h}{\partial x_i}-\dfrac{\partial E_h}{\partial x_i}\right)\left(T-\Pi_0^{0,E}T\right) &\leq Ch_E\|u_i\|_{\L^2(E)}\left\|\dfrac{\partial E_h}{\partial x_i}\right\|_{\L^{2}(E)}\|T\|_{\W^{1,\infty}(E)}.
\end{split}
\end{equation}}
Therefore, from \eqref{1b} we derive the following estimate
\begin{equation}\label{IIb}
\sum_{E\in\O_h}\sum_{i=1}^2 \dfrac{1}{2}\I\I_{2,b}^E \leq Ch(\|\bu\|_{\H^s(\O)^2}+\|\bb{f}\|_{\L^2(\O)^2})|T|_{\W^{1+s,p}(\O)}|E_h|_{\H^1(\O)},
\end{equation}
and we conclude from \eqref{IIa} and \eqref{IIb} that
\begin{equation}\label{eq:II_2}
\I\I_2 \leq \mathfrak{A}(\bu,T,\bb{f})h|E_h|_{\H^1(\O)}.
\end{equation}
Finally, we derive from \eqref{eq:II_1} and \eqref{eq:II_2} the following bound for $\I\I$:
\begin{equation}\label{estimateII}
\I\I \leq \mathfrak{A}(T)\|\bb{e}_h\|_{\L^2(\O)^2}|E_h|_{\H^1(\O)} + \mathfrak{A}(\bu,T,\bb{f})h|E_h|_{\H^1(\O)}.
\end{equation}
To estimate $\I\I\I$, we invoke Lemma \ref{prop:bramblehilbert} and the definition of $\Pi_{k+1}^{0,E}$ to obtain
\begin{equation}\label{estimateIII}
\I\I\I \leq Ch^{1+s}|g|_{\H^{1+s}(\O)}|E_h|_{\H^1(\O)}.
\end{equation}
Finally, from \eqref{eq:IT}, \eqref{estimateII} and \eqref{estimateIII} we obtain
\begin{equation}\label{estimafinal1}
\left(\widetilde{\alpha}_2\kappa_* - \mathfrak{A}(T)\right)|E_h|_{\H^1(\O)} \leq \mathfrak{A}(\bu,T,\bb{f},g,\kappa)h + \mathfrak{A}(T)\|\bb{e}_h\|_{\L^2(\O)^2},
\end{equation}
where $\widetilde{\alpha}_2\kappa_* - \mathfrak{A}(T)>0$. This concludes the proof
\end{proof}

\begin{lemma}\label{Lema2vel}
Let us suppose that \textbf{A0}), \textbf{A2}), \textbf{A3}), and \textbf{A4}) hold. Assume the smallness assumptions given by Theorems \ref{eq:uniqcont} and \ref{eq:uniqueness}. Let $(\bu,\textsf{p},T) \in \bV_0 \times \Q \times \H$ and $(\bu_h,\textsf{p}_h,T_h) \in \bV_h\times\Q_h\times\H_h$ be the unique solutions of problems \eqref{eq:DHW2} and \eqref{eq:DHW2discreto}, respectively, and let $T_I$ and $\bu_I$ be the interpolants of $T\in\H_h$ and $\bu\in\bV_h$ given by Lemmas \ref{estimate2gen}, then we have
\begin{equation*}
\alpha_2\nu_*\|\bb{e}_h\|_{\L^2(\O)^2} \leq \mathfrak{A}(\bu,T,\bb{f},\nu)h^s + \mathfrak{A}(\bu)|E_h|_{\H^1(\O)},
\end{equation*}
\end{lemma}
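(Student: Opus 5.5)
We argue as for Lemma~\ref{Lema1temp}, now on the Darcy equations, and work in the discretely divergence-free setting. Take $\bu_I=\bb{I}_h^F\bu$, the Fortin interpolant of Proposition~\ref{prop:errorinterp}. The commuting property $\div\bb{I}_h^F\bu=\Pi_k^0\div\bu=0$ gives $\bu_I\in\bZ_h$, and $\bu_h\in\bZ_h$ by the second equation of \eqref{eq:DHW2discreto}, so $\bb{e}_h=\bu_I-\bu_h\in\bZ_h$. Because $\bZ_h\subset\bZ$, the pressure terms drop out when we test with $\bb{e}_h$: we have $b(\bb{e}_h,\textsf{p}_h)=0$, and $b(\bb{e}_h,\textsf{p})=-\int_\O\textsf{p}\div\bb{e}_h=0$.

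From the coercivity \eqref{eq:ah_bound} we then write
\begin{equation*}
\alpha_2\nu_*\|\bb{e}_h\|^2_{\L^2(\O)^2}\le a_h(T_h;\bu_I,\bb{e}_h)-a_h(T_h;\bu_h,\bb{e}_h)
=a_h(T_h;\bu_I,\bb{e}_h)-a(T;\bu,\bb{e}_h)+F(\bb{e}_h)-F_h(\bb{e}_h),
\end{equation*}
using the first equations of \eqref{eq:DHW2} and \eqref{eq:DHW2discreto}. The data term is $F(\bb{e}_h)-F_h(\bb{e}_h)=\int_\O(\bb{f}-\bb{\Pi}_k^0\bb{f})\cdot\bb{e}_h\le Ch^s|\bb{f}|_{\H^s(\O)^2}\|\bb{e}_h\|_{\L^2(\O)^2}$, by Lemma~\ref{prop:bramblehilbert} and \textbf{A4}).

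We split the consistency term elementwise, in the same pattern as $\I_1^E,\dots,\I_4^E$:
\begin{align*}
&\int_E\nu(\Pi^{0,E}_{k+1}T_h)(\bb{\Pi}^{0,E}_k\bu_I-\bu)\cdot\bb{\Pi}^{0,E}_k\bb{e}_h
+\int_E(\nu(\Pi^{0,E}_{k+1}T_h)-\nu(T))\bu\cdot\bb{\Pi}^{0,E}_k\bb{e}_h\\
&\quad-\int_E(\mathbf{I}-\bb{\Pi}^{0,E}_k)(\nu(T)\bu)\cdot\bb{e}_h
+\nu(\Pi^{0,E}_0T_h)S_1^E((\mathbf{I}-\bb{\Pi}^{0,E}_k)\bu_I,(\mathbf{I}-\bb{\Pi}^{0,E}_k)\bb{e}_h).
\end{align*}
We bound these four terms as follows.
\begin{itemize}
\item The first and fourth terms are handled with \textbf{A0}), \eqref{eq:S_v}, the $\L^2$-stability of $\bb{\Pi}_k^{0,E}$, Proposition~\ref{prop:errorinterp} and Lemma~\ref{prop:bramblehilbert}. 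Each is bounded by $C\nu^*h^s|\bu|_{\H^s}\|\bb{e}_h\|$.
\item The third term is bounded by $Ch^s\|\nu(T)\|_{\W^{s,\infty}}\|\bu\|_{\H^s}\|\bb{e}_h\|$, using \textbf{A4}) iii).
\end{itemize}

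The main obstacle is the second term, which couples back to the temperature. Using \textbf{A0}) and H\"older with exponents $\frac{2r}{r-2}$, $r$ and $2$, we bound it by $\nu_{\text{lip}}\|T-\Pi^{0,E}_{k+1}T_h\|_{\L^{\frac{2r}{r-2}}(E)}\|\bu\|_{\L^r(E)}\|\bb{e}_h\|_{\L^2(E)}$, where $\bu\in\H^s\hookrightarrow\L^r$. We then split
\begin{equation*}
T-\Pi^{0,E}_{k+1}T_h=(T-\Pi^{0,E}_{k+1}T)+\Pi^{0,E}_{k+1}(T-T_I)+\Pi^{0,E}_{k+1}E_h.
\end{equation*}
Proposition~\ref{prop:stabPi} handles the projection of the second and third pieces. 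Lemmas~\ref{prop:bramblehilbert} and \ref{estimate2gen} give $O(h^{s+1})$ for the first two pieces. For the last piece, the Sobolev embedding \eqref{eq:Poincare} gives $\|E_h\|_{\L^{\frac{2r}{r-2}}(\O)}\le\textsf{C}_{\frac{2r}{r-2}}|E_h|_{\H^1(\O)}$. Its contribution is therefore $\mathfrak{A}(\bu)|E_h|_{\H^1(\O)}\|\bb{e}_h\|$, where $\mathfrak{A}(\bu)=C\nu_{\text{lip}}\textsf{C}_{\frac{2r}{r-2}}\|\bu\|_{\L^r}$.

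Finally, we sum over $E$ with Cauchy--Schwarz and divide by $\|\bb{e}_h\|_{\L^2(\O)^2}$. This yields the claimed bound, with $\mathfrak{A}(\bu,T,\bb{f},\nu)$ collecting the $h^s$ terms.
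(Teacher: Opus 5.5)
Your proposal is correct and follows essentially the same route as the paper. Both use coercivity of $a_h(T_h;\cdot,\cdot)$, add and subtract $a(T;\bu,\bb{e}_h)$, and apply the same four-term elementwise splitting, with the coupling term handled by Lipschitz continuity of $\nu$, H\"older in $\L^{\frac{2r}{r-2}}\times\L^r\times\L^2$ (the paper takes $r=3$) and the Sobolev embedding for $E_h$, and with Bramble--Hilbert for the data term; your explicit check that $\bb{e}_h\in\bZ_h\subset\bZ$, via the commuting property of the Fortin interpolant so that both pressure terms vanish, makes rigorous a step the paper uses without comment.
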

where $E_h\coloneqq T_I-T_h$ and $\bb{e}_h\coloneqq\bu_I-\bu_h$.
\begin{proof}
Invoking the coercivity property \eqref{eq:ah_bound} for $a_h(\cdot;\cdot,\cdot)$ and add and subtract the term $a(T;\bu,\boldsymbol{e}_h)$ we obtain
\begin{equation}\label{coercividaderror}
\begin{aligned}
\alpha_2\nu_*\|\boldsymbol{e}_h\|_{\L^2(\O)}^2
&
\leq a_h(T_h;\boldsymbol{e}_h,\boldsymbol{e}_h) = a_h(T_h;\bu_I,\boldsymbol{e}_h)-a_h(T_h;\bu_h,\boldsymbol{e}_h)
\\
&
=
a_h(T_h;\bu_I,\boldsymbol{e}_h)
-
a(T;\bu,\boldsymbol{e}_h)
+
a(T;\bu,\boldsymbol{e}_h)
-
a_h(T_h;\bu_h,\boldsymbol{e}_h)
\\
&
=
a_h(T_h;\bu_I,\bb{e}_h) - a(T;\bu,\bb{e}_h) + \left(\bb{f}-\bPi_k^0\bb{f},\bb{e}_h\right)_{\L^2(\O)^2} = \I\V + \V.
\end{aligned}
\end{equation}
Now, to estimate $\I\V$  we use the definitions of $a_h(\cdot;\cdot,\cdot)$ and $a(\cdot;\cdot,\cdot)$ in \eqref{eq:bilinear_form_a} and  \eqref{eq:bilinear_form_fraka}, respectively, to arrive at the following identities:
{\small \begin{multline*}
\I\V = \sum_{E\in\O_h} \int_E \left(\nu(\Pi_{k+1}^{0,E}T_h)\bPi_k^{0,E}\bu_I\cdot\bPi_k^{0,E}\bb{e}_h - \nu(T)\bu\cdot\bb{e}_h\right) \\
+ \sum_{E\in\O_h}\nu(\Pi_0^{0,E}T)S_1^E((\bb{I}-\bPi_k^{0,E})\bu_I,(\mathbf{I}-\bPi_k^{0,E})\bb{e}_h) \\
= \sum_{E\in\O_h} \int_E \left(\nu(\Pi_{k+1}^{0,E}T_h)\bPi_k^{0,E}(\bu_I-\bu)\cdot\bPi_k^{0,E}\bb{e}_h + \int_E \nu(\Pi_{k+1}^{0,E}T)\bu\cdot\bPi_k^{0,E}\bb{e}_h - \nu(T)\bu\cdot\bb{e}_h\right) \\
+ \sum_{E\in\O_h}\nu(\Pi_0^{0,E}T)S_1^E((\mathbf{I}-\bPi_k^{0,E})\bu_I,(\mathbf{I}-\bPi_k^{0,E})\bb{e}_h) \\
= \sum_{E\in\O_h} \left\lbrace \int_E \nu(\Pi_{k+1}^{0,E}T_h)\bPi_k^{0,E}(\bu_I-\bu)\cdot\bPi_k^{0,E}\bb{e}_h + \int_E \left[\nu(\Pi_{k+1}^{0,E}T_h) - \nu(T)\right]\bu\cdot\bPi_k^{0,E}\bb{e}_h\right. \\ 
\left. + \int_E \nu(T)\bu\cdot(\bPi_k^{0,E}\bb{e}_h - \bb{e}_h) + \nu(\Pi_0^{0,E}T_h)S_V^E((\mathbf{I}-\bPi_k^{0,E})\bu_I,(\mathbf{I}-\bPi_k^{0,E})\bb{e}_h)\right\rbrace \\
= \sum_{E\in\O_h} \I\V_1^E + \I\V_2^E + \I\V_3^E + \I\V_4^E.
\end{multline*}}
Then, using H\"{o}lder's inequality, Lemmas \ref{prop:bramblehilbert} and \ref{prop:stabPi} and triangle inequality, we obtain
{\small \begin{multline*}
\sum_{E\in\O_h}\I\V_1^E + \I\V_2^E + \I\V_4^E \leq C\nu^*\|\bu-\bu_I\|_{\L^2(\O)}\|\bb{e}_h\|_{\L^2(\O)}  \\
+ C\nu^*\|(\mathbf{I}-\bPi_k^0)\bu_I\|_{\L^2(\O)^2}\|(\mathbf{I}-\bPi_k^0)\bb{e}_h\|_{\L^2(\O)^2} + \nu_{\text{lip}}\|T-\Pi_{k+1}^0T_h\|_{\L^6(\O)}\|\bu\|_{\L^3(\O)^2}\|\bb{e}_h\|_{\L^2(\O)^2} \\
\leq C\left(\nu^*h^s|\bu|_{\H^s(\O)^2} + Ch^{1+s}|T|_{\W^{1+s,6}(\O)}|\bu|_{\H^s(\O)^2}\right)\|\bb{e}_h\|_{\L^2(\O)^2} \\
+ |\bu|_{\H^s(\O)^2}\|\bb{e}_h\|_{\L^2(\O)^2}|E_h|_{\H^1(\O)} \\
\leq \mathfrak{A}(\bu,T)h^s\|\bb{e}_h\|_{\L^2(\O)^2} + \mathfrak{A}(\bu)\|\bb{e}_h\|_{\L^2(\O)^2}|E_h|_{\H^1(\O)}.
\end{multline*}}
Next, using the definition of $\bPi_k^{0,E}$ and Lemma \ref{prop:bramblehilbert} we obtain
\begin{multline*}
\sum_{E\in\O_h} \I\V_3^E = \sum_{E\in\O_h} \int_E \nu(T)\bu\cdot\left(\bPi_k^{0,E}\bb{e}_h-\bb{e}_h\right) \\
= \sum_{E\in\O_h} \left(\nu(T)\bu - \bPi_k^{0,E}(\nu(T)\bu)\right)\left(\bPi_k^{0,E}\bb{e}_h-\bb{e}_h\right) \leq \mathfrak{A}(\bu,\nu)h^s\|\bb{e}_h\|_{\L^2(\O)^2}.
\end{multline*}
Hence, we derive the following bound for $\I\V$:
\begin{equation}\label{estimateIV}
\I\V \leq \mathfrak{A}(\bu,T,\nu)h^s\|\bb{e}_h\|_{\L^2(\O)^2} + \mathfrak{A}(\bu)\|\bb{e}_h\|_{\L^2(\O)^2}|E_h|_{\H^1(\O)}.
\end{equation}
To estimate $\V$, we use Lemma \ref{prop:bramblehilbert}, obtaining
\begin{equation}\label{estimateV}
\V \lesssim h^s|\bb{f}|_{\H^s(\O)^2}\|\bb{e}_h\|_{\L^2(\O)^2}.
\end{equation}
From \eqref{coercividaderror}, \eqref{estimateIV} and \eqref{estimateV} we deduce that 
\begin{equation}\label{estimatee_h}
\alpha_2\nu_*\|\bb{e}_h\|_{\L^2(\O)^2} \leq \mathfrak{A}(\bu,T,\bb{f},\nu)h^s + \mathfrak{A}(\bu)|E_h|_{\H^1(\O)}.
\end{equation}
This concludes the proof.
\end{proof}

\begin{lemma}\label{Lema3press}
Let us suppose that \textbf{A0}), \textbf{A1}), \textbf{A2}), \textbf{A3}), and \textbf{A4}) hold. Assume the smallness assumptions given by  Theorems \ref{eq:uniqcont} and \ref{eq:uniqueness}. Let $(\bu,\textsf{p},T) \in \bV_0 \times \Q \times \H$ and $(\bu_h,\textsf{p}_h,T_h) \in \bV_h\times\Q_h\times\H_h$ be the unique solutions of problems \eqref{eq:DHW2} and \eqref{eq:DHW2discreto}, respectively. Let us consider $\textsf{p}_I|_{E} \coloneqq \Pi^{0,E}_{k-1} \textsf{p}$, for each $E\in\O_h$. Then, we have
\begin{equation*}
\beta\|\textsf{e}_h\|_{\L^2(\O)} \leq \mathfrak{A}(\bu,T,\textsf{p},\bb{f},g,\kappa,\nu)h.
\end{equation*}
\end{lemma}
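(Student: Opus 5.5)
We bound the pressure error $\textsf{e}_h \coloneqq \textsf{p}_I - \textsf{p}_h \in \Q_h$ through the discrete inf-sup condition \eqref{eq:infsupdiscreta}. Since $\textsf{e}_h\in\Q_h$, we have
\begin{equation*}
\tilde{\beta}\|\textsf{e}_h\|_{\L^2(\O)} \leq \sup_{\bv_h\in\bV_h}\dfrac{b(\bv_h,\textsf{e}_h)}{\|\bv_h\|_{\H(\div,\O)}}.
\end{equation*}
The constant $\beta$ in the statement is understood as the discrete constant $\tilde{\beta}$. We then split $b(\bv_h,\textsf{e}_h)=b(\bv_h,\textsf{p}_I-\textsf{p})+b(\bv_h,\textsf{p}-\textsf{p}_h)$.

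For the first term, recall that $\div\bv_h|_E\in\mathbb{P}_k(E)$. The projection $\textsf{p}_I$ should be taken of degree $k$, so that it lies in $\Q_h=\mathbb{P}_k(\O_h)\cap\Q$. With that choice the $\L^2$-orthogonality of $\Pi_k^{0,E}$ gives $b(\bv_h,\textsf{p}_I-\textsf{p})=0$. If one insists on degree $k-1$, Lemma \ref{prop:bramblehilbert} still gives the bound $Ch^{\min(s,k)}|\textsf{p}|_{\H^s(\O)}\|\bv_h\|_{\H(\div,\O)}$, which is at least $O(h)$ for $k\geq1$.

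For the second term, we subtract the first equation of \eqref{eq:DHW2discreto} from that of \eqref{eq:DHW2}. The latter holds for $\bv_h\in\bV_h\subset\bV_0$. This yields
\begin{equation*}
b(\bv_h,\textsf{p}-\textsf{p}_h)=\big[a_h(T_h;\bu_h,\bv_h)-a(T;\bu,\bv_h)\big]+\big[F(\bv_h)-F_h(\bv_h)\big].
\end{equation*}
The load term equals $(\bb{f}-\bPi_k^0\bb{f},\bv_h-\bPi_k^0\bv_h)$. It is bounded by $Ch^s|\bb{f}|_{\H^s(\O)^2}\|\bv_h\|_{\L^2(\O)^2}$, exactly as the bound for $\V$ in Lemma \ref{Lema2vel}.

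For the consistency term, we write $\bu_h=\bu_I-\bb{e}_h$. This gives $a_h(T_h;\bu_I,\bv_h)-a(T;\bu,\bv_h)-a_h(T_h;\bb{e}_h,\bv_h)$. The last piece is bounded by $\alpha_1\nu^*\|\bb{e}_h\|_{\L^2(\O)^2}\|\bv_h\|_{\L^2(\O)^2}$ using \eqref{eq:ah_bound}. The first two pieces are treated with the decomposition $\I\V_1^E,\dots,\I\V_4^E$ from the proof of Lemma \ref{Lema2vel}, with $\bb{e}_h$ replaced by the generic $\bv_h$. This gives
\begin{equation*}
\mathfrak{A}(\bu,T,\nu)h^s\|\bv_h\|_{\L^2(\O)^2}+\mathfrak{A}(\bu)|E_h|_{\H^1(\O)}\|\bv_h\|_{\L^2(\O)^2}.
\end{equation*}
The $|E_h|$ contribution comes from $\nu(\Pi_{k+1}^0T_h)-\nu(T)$ through the Lipschitz assumption \textbf{A0}), Hölder's inequality with $\L^6\times\L^3$, and \eqref{eq:Poincare}. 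All bounds involve only $\|\bv_h\|_{\L^2}\le\|\bv_h\|_{\H(\div,\O)}$, so dividing by $\|\bv_h\|_{\H(\div,\O)}$ is harmless.

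Collecting these estimates gives
\begin{equation*}
\tilde{\beta}\|\textsf{e}_h\|_{\L^2(\O)}\leq \mathfrak{A}h^s+C\|\bb{e}_h\|_{\L^2(\O)^2}+\mathfrak{A}(\bu)|E_h|_{\H^1(\O)}.
\end{equation*}
The key step is closing the coupled system of Lemmas \ref{Lema1temp} and \ref{Lema2vel}. We substitute the velocity bound of Lemma \ref{Lema2vel} into Lemma \ref{Lema1temp}. This gives
\begin{equation*}
\Big(\widetilde{\alpha}_2\kappa_*-\mathfrak{A}(T)-\frac{\mathfrak{A}(T)\mathfrak{A}(\bu)}{\alpha_2\nu_*}\Big)|E_h|_{\H^1(\O)}\leq \mathfrak{A}h .
\end{equation*}
The prefactor must be positive. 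This is the point where the smallness assumptions of Theorems \ref{eq:uniqcont} and \ref{eq:uniqueness} are used; I would adopt them as guaranteeing that positivity, possibly enlarging them slightly. Once positivity holds, $|E_h|_{\H^1(\O)}\le\mathfrak{A}h$, then $\|\bb{e}_h\|_{\L^2(\O)^2}\le\mathfrak{A}h$, and inserting both into the pressure inequality (with $h^s\le h$) yields the claim.

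The main obstacle is justifying this absorption: ensuring the combined constant is positive uniformly in $h$ under the stated assumptions. The pressure step itself is routine.
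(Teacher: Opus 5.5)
Your argument follows the paper's core route. You use the discrete inf-sup condition, split $b(\bv_h,\textsf{e}_h)$ into an interpolation term, a consistency term $a_h(T_h;\bu_h,\bv_h)-a(T;\bu,\bv_h)$ and a load term, and bound the consistency term by the same four-piece decomposition. The paper decomposes directly with $\bu_h$, so its bound \eqref{presfinal} carries $\|\bu-\bu_h\|_{\L^2(\O)^2}$ and $|T-T_h|_{\H^1(\O)}$. Your splitting $\bu_h=\bu_I-\bb{e}_h$ produces $\|\bb{e}_h\|_{\L^2(\O)^2}$ and $|E_h|_{\H^1(\O)}$ instead; that difference is cosmetic.

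There are two genuine differences.

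\emph{Choice of $\textsf{p}_I$.} You take $\textsf{p}_I=\Pi_k^0\textsf{p}$. It still lies in $\Q_h$, since element averages are preserved and so the mean stays zero. It also makes $b(\bv_h,\textsf{p}_I-\textsf{p})=0$ exactly, because $\div\bv_h|_E\in\mathbb{P}_k(E)$. The paper keeps $\Pi^{0,E}_{k-1}$ and bounds this term by $\mathfrak{A}(\textsf{p})h^s$ via Lemma~\ref{prop:bramblehilbert}. That bound is really only $h^{\min(s,k)}$, and it is meaningless for $k=0$, which is exactly the case computed in Section~\ref{sec:numericos}. Your choice is the one that makes this step valid for every $k\geq 0$.

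\emph{Where the closure happens.} The paper's proof of this lemma actually stops at \eqref{presfinal}, whose right-hand side still contains the errors. The bound $\mathfrak{A}h$ claimed in the statement is reached only in the proof of Theorem~\ref{thm:error_estimates}, after combining Lemmas~\ref{Lema1temp} and \ref{Lema2vel}. You perform that closure inside the lemma, which is what the statement as written requires.

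The one point to tighten is the hypothesis behind the closure. Positivity of $\widetilde{\alpha}_2\kappa_*-\mathfrak{A}(T)-\mathfrak{A}(T)\mathfrak{A}(\bu)/(\alpha_2\nu_*)$ does not follow from the assumptions listed in the lemma. Conditions \eqref{eq:uniqDHW2} and \eqref{eq:uniq} involve other quantities and constants, and they say nothing about the unspecified constants $\mathfrak{A}(T)$ and $\mathfrak{A}(\bu)$ of Lemmas~\ref{Lema1temp}--\ref{Lema2vel}. Moreover, as printed they require the ratio to be $<1$, whereas the uniqueness arguments need it to exceed $1$.

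Your intuition that a condition of the same structure is needed is right, but it must be imposed explicitly, as the paper does in Theorem~\ref{thm:error_estimates}. Your combined condition is the algebraically correct one: it is equivalent to $\widetilde{\alpha}_2\kappa_*-\mathfrak{A}(T)>0$ together with $\alpha_2\nu_*>\mathfrak{A}(\bu)\mathfrak{A}(T)/(\widetilde{\alpha}_2\kappa_*-\mathfrak{A}(T))$. The paper's second condition multiplies by $\widetilde{\alpha}_2\kappa_*-\mathfrak{A}(T)$ where it should divide. With that hypothesis stated explicitly, your proof is complete.
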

\begin{proof}
First, let $\bv_h \in \bV_h$ and use the definition of $\textsf{e}_h$, namely $\textsf{e}_h = \textsf{p}_I - \textsf{p}_h$ and the first equations of the continuous and discrete systems \eqref{eq:DHW2} and \eqref{eq:DHW2discreto}, respectively, to obtain
\begin{multline*}
b(\bv_h,\textsf{e}_h)=b(\bv_h, \textsf{p}_I)-b(\bv_h, \textsf{p}_h)
=
b(\bv_h, \textsf{p}_I -\textsf{p})
+
b(\bv_h, \textsf{p})-b(\bv_h, \textsf{p}_h)
=
b(\bv_h, \textsf{p}_I- \textsf{p})
\\
+
\left[a_h(T_h;\bu_h,\bv_h)-a(T;\bu,\bv_h)\right]
+
(F-F_h)\bv_h.
\end{multline*}

Let us define $\I:=a_h(T_h;\bu_h,\bv_h)-a(T;\bu,\bv_h)$. Simple algebraic manipulations yield to the following identity
{\small \begin{multline}
\I = \sum_{E\in\O_h} \left\lbrace \int_E \nu(\Pi_{k+1}^{0,E}T_h)\bPi_k^{0,E}\bu_h\cdot\bPi_k^{0,E}\bv_h - \int_E \nu(T)\bu\cdot\bv \right\rbrace  \\
+ \sum_{E\in\O_h} \nu(\Pi_0^{0,E}T_h)S_V^E((\mathbf{I}-\bPi_k^{0,E})\bu_h,(\mathbf{I}-\bPi_k^{0,E})\bv_h) \\
= \sum_{E\in\O_h} \left\lbrace \int_E \nu(\Pi_{k+1}^{0,E}T_h)\left(\bPi_k^{0,E}\bu_h - \bu\right)\cdot\bPi_k^{0,E}\bv_h + \int_E \left(\nu(\Pi_{k+1}^{0,E}T_h) - \nu(T) \right) \bu\cdot\bPi_k^{0,E}\bv_h \right. \\
\left. + \int_E \nu(T)\bu\cdot\left(\bPi_k^{0,E}\bv_h-\bv_h\right) + \nu(\Pi_0^{0,E}T_h)S_V^E((\mathbf{I}-\bPi_k^{0,E})\bu_h,(\mathbf{I}-\bPi_k^{0,E})\bv_h)\right\rbrace \\
=\sum_{E\in\O_h} \I_A^E+\I\I_A^E+\I\I\I_A^E+\I\V_A^E.
\label{eq:K_1}
\end{multline}}
Now, we need to estimate the contributions $\I_A^E, \I\I_A^E, \I\I\I_A^E, \I\V_A^E$. To estimate $\I_A^E$, we invoke assumption \textbf{A0}), Cauchy-Schwarz and triangle inequality, Proposition \ref{prop:stabPi} and Lemma \ref{eq:Poincare}, obtaining
\begin{multline}
\sum_{E\in\O_h} \I_A^E \leq \nu^{*}\sum_{E\in\O_h} \|\bu-\bPi_k^{0,E}\bu_h\|_{\L^2(E)^2}\|\bPi_k^{0,E}\bv_h\|_{\L^2(E)^2} \\
\leq \nu^{*}\sum_{E\in\O_h}\left(\|\bu- \bPi_k^{0,E}\bu\|_{\L^2(E)^2} + \|\bPi_k^{0,E}(\bu-\bu_h)\|_{\L^2(E)^2}\right)\|\bv_h\|_{\L^2(E)^2} \\
\leq C\nu^{*}h^s|\bu|_{\H^s(\O)^2}\|\bv_h\|_{\L^2(\O)^2} \\ 
+ C\nu^{*}\|\bu-\bu_h\|_{\L^2(\O)^2}\|\bv_h\|_{\L^2(\O)^2}.
\label{eq:IA}
\end{multline}
To estimate $\I\I_A^E$, we use the Lipschitz-continuity of $\nu(\cdot)$, H\"{o}lder and triangle inequality, Proposition \ref{prop:stabPi} and Lemma \ref{prop:bramblehilbert} in order to obtain
{\small \begin{multline}
\sum_{E\in\O_h} \I\I_A^E \leq \nu_{\text{lip}}\sum_{E\in\O_h} \|T-\Pi_{k+1}^{0,E}T_h\|_{\L^{\frac{2p}{p-2}}(E)}\|\bu\|_{\L^p(E)^2}\|\bPi_k^{0,E}\bv_h\|_{\L^{2}(E)^2} \\
\leq \nu_{\text{lip}}\sum_{E\in\O_h}\left(\|T- \Pi_{k+1}^{0,E}T\|_{\L^{\frac{2p}{p-2}}(E)} + \|\Pi_{k+1}^{0,E}(T-T_h)\|_{\L^{\frac{2p}{p-2}}(E)^2}\right)\|\bu\|_{\L^p(E)^2}\|\bv_h\|_{\L^2(E)^2} \\
\leq C\textsf{C}_2\nu_{\text{lip}}\|\bu\|_{\H^s(\O)^2}|T|_{\W^{1+s,p}(\O)}h^{s+1}\|\bv_h\|_{\L^2(\O)^2} + \\
C\textsf{C}_{\frac{2p}{p-2}}\nu_{\text{lip}}\|\bu\|_{\L^2(\O)^2}|T-T_h|_{\H^1(\O)}\|\bv_h\|_{\L^2(\O)^2}.
\label{eq:IIA}
\end{multline}}
For $\I\I\I_A^E$, we proceed as before: Using the definition of $\bPi_k^{0,E}$ and Lemma \ref{prop:bramblehilbert} we obtain
\begin{equation}\label{eq:IIIA}
\begin{split}
\sum_{E\in\O_h} \I\I\I_A^E &= \sum_{E\in\O_h} \int_E \nu(T)\bu\cdot\left(\bPi_k^{0,E}\bv_h-\bv_h\right) \\
&= \sum_{E\in\O_h} \left(\nu(T)\bu - \bPi_k^{0,E}(\nu(T)\bu)\right)\left(\bPi_k^{0,E}\bv_h-\bv_h\right) \\
&\leq C\|\nu\|_{\W^{s,\infty}(\O)}|\bu|_{\H^s(\O)^2}h^s\|\bv_h\|_{\L^2(\O)^2}.
\end{split}
\end{equation}
Now, to estimate $\I\V_A^E$ we invoke Proposition \ref{prop:stabPi}, Lemma \ref{prop:bramblehilbert}, Cauchy-Schwarz and triangle inequality, obtaining 
\begin{equation}\label{eq:IVA}
\begin{split}
\sum_{E\in\O_h} \I\V_A^E &\leq \nu^*\sum_{E\in\O_h} \|\bu_h-\bPi_k^{0,E}\bu_h\|_{\L^2(E)^2}\|\bv_h-\bPi_k^{0,E}\bv_h\|_{\L^2(E)^2} \\
&\leq \nu^*\sum_{E\in\O_h} \left(2\|\bu-\bu_h\|_{\L^2(E)^2}+\|\bu-\bPi_k^{0,E}\bu\|_{\L^2(E)^2}\right)\|\bv_h\|_{\L^2(E)^2} \\
&\leq C\nu^*h^s|\bu|_{\H^s(\O)^2}\|\bv_h\|_{\L^2(\O)^2} + C\nu^*\|\bu-\bu_h\|_{\L^2(\O)^2}\|\bv_h\|_{\L^2(\O)^2}.
\end{split}
\end{equation}
Therefore, gathering \eqref{eq:IA}, \eqref{eq:IIA}, \eqref{eq:IIIA} and \eqref{eq:IVA} we conclude that 
\begin{equation}\label{Ifinal}
\I \leq \mathfrak{A}(\bu,T,\nu)h^s\|\bv_h\|_{\L^2(\O)^2} + C\left(\|\bu-\bu_h\|_{\L^2(\O)^2}+\mathfrak{A}(\bu)|T-T_h|_{\H^1(\O)}\right)\|\bv_h\|_{\L^2(\O)^2}.
\end{equation}
Now, the terms $\I\I:=b(\bv_h,\textsf{p}_I-\textsf{p})$ and $\I\I\I:= (F-F_h)\bv_h$ can be easily estimated. In fact, for $\I\I$ we use Cauchy-Schwarz and Lemma \ref{prop:bramblehilbert}, obtaining
\begin{equation}\label{IIfinal}
\I\I \leq \|\bv_h\|_{\H(\div,\O)}\|\textsf{p}-\textsf{p}_I\|_{\L^2(\O)} \leq \mathfrak{A}(\textsf{p})h^s\|\bv_h\|_{\H(\div,\O)},
\end{equation}
whereas for $\I\I\I$ we proceed in a similar way, obtaining
\begin{equation}\label{IIIfinal}
\I\I\I \leq \|\bb{f}-\bPi_k^{0,E}\bb{f}\|_{\L^2(\O)^2}\|\bv_h\|_{\L^2(\O)^2} \leq \mathfrak{A}(\bb{f})h^s\|\bv_h\|_{\L^2(\O)^2}. 
\end{equation}
Therefore, gathering \eqref{eq:infsupdiscreta}, \eqref{Ifinal}, \eqref{IIfinal} and \eqref{IIIfinal} we deduce that 
\begin{equation}\label{presfinal}
\beta\|\textsf{e}_h\|_{\L^2(\O)} \leq \mathfrak{A}(\bu,T,\textsf{p},\nu,\bb{f})h^s + C\|\bu-\bu_h\|_{\L^2(\O)^2} \\ 
+\mathfrak{A}(\bu)|T-T_h|_{\H^1(\O)}.
\end{equation}
\end{proof}

Now, we are in position to prove the main result of this section.

{\begin{theorem}[error estimates]
Let us suppose that assumptions \textbf{A0}), \textbf{A1}), \textbf{A2}), \textbf{A3}), and \textbf{A4}) hold. Assume the smallness assumptions given by Theorems \ref{eq:uniqcont} and \ref{eq:uniqueness}. Let $(\bu,\textsf{p},T) \in \bV_0 \times \Q \times \H$ and $(\bu_h,\textsf{p}_h,T_h) \in \bV_h\times\Q_h\times\H_h$ be the unique solutions of problems \eqref{eq:DHW2} and \eqref{eq:DHW2discreto}, respectively. If\begin{equation*}
\label{eq:condicionerror}
\begin{split}
\widetilde{\alpha}_2\kappa_* - \mathfrak{A}(T)>0,\\
\alpha_2\nu_* - \mathfrak{A}(\bu,T)\left(\widetilde{\alpha}_2\kappa_* - \mathfrak{A}(T)\right)>0,
\end{split}
\end{equation*}
then the following a priori error estimates hold
\begin{equation*}
\begin{aligned}
\|\bu-\bu_h\|_{0,\O}+|T-T_h|_{1,\O}
& \leq
\mathfrak{A}(\bu,T,\bb{f},g,\kappa,\nu)h
\\
\| \textsf{p}- \textsf{p}_h\|_{0,\O}
& \leq
\mathfrak{A}(\bu,T,p,\bb{f},g,\kappa,\nu)h.
\end{aligned}
\end{equation*}
\label{thm:error_estimates}
\end{theorem}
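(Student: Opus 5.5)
The plan is to combine Lemmas \ref{Lema1temp}, \ref{Lema2vel} and \ref{Lema3press} through the splittings $\bu-\bu_h=(\bu-\bu_I)+\bb{e}_h$, $T-T_h=(T-T_I)+E_h$ and $\textsf{p}-\textsf{p}_h=(\textsf{p}-\textsf{p}_I)+\textsf{e}_h$. The interpolation parts are handled by Proposition \ref{prop:errorinterp}, Lemma \ref{estimate2gen} (with $\W^{1,p}\hookrightarrow\H^1$ for $p>2$) and Lemma \ref{prop:bramblehilbert}. Each gives at least $O(h^s)$, and since $s\geq 1$ and $h$ is bounded, at least $O(h)$. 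The core of the proof is therefore to bound the discrete errors $\bb{e}_h$, $E_h$ and $\textsf{e}_h$.

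\textbf{Step 1 (closing the coupled system).} Lemma \ref{Lema1temp} gives
\[
|E_h|_{\H^1(\O)}\leq \frac{\mathfrak{A}(\bu,T,\bb{f},g,\kappa)h+\mathfrak{A}(T)\|\bb{e}_h\|_{\L^2(\O)^2}}{\widetilde{\alpha}_2\kappa_*-\mathfrak{A}(T)},
\]
which makes sense because of the first condition of the statement. I substitute this into Lemma \ref{Lema2vel}:
\[
\alpha_2\nu_*\|\bb{e}_h\|_{\L^2(\O)^2}\leq \mathfrak{A}h^s+\mathfrak{A}(\bu)\frac{\mathfrak{A}h+\mathfrak{A}(T)\|\bb{e}_h\|_{\L^2(\O)^2}}{\widetilde{\alpha}_2\kappa_*-\mathfrak{A}(T)} .
\]
Moving the $\|\bb{e}_h\|$ term to the left leaves the factor $\alpha_2\nu_*-\mathfrak{A}(\bu)\mathfrak{A}(T)/(\widetilde{\alpha}_2\kappa_*-\mathfrak{A}(T))$. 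I read the second hypothesis, with the constant $\mathfrak{A}(\bu,T)$ collecting $\mathfrak{A}(\bu)\mathfrak{A}(T)$ and the quotient, as exactly the positivity of this factor. This gives $\|\bb{e}_h\|_{\L^2(\O)^2}\leq \mathfrak{A}h$, and feeding it back into Lemma \ref{Lema1temp} gives $|E_h|_{\H^1(\O)}\leq\mathfrak{A}h$.

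\textbf{Step 2 (velocity and temperature).} The triangle inequality together with the interpolation estimates yields the first claimed bound for $\|\bu-\bu_h\|_{0,\O}+|T-T_h|_{1,\O}$.

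\textbf{Step 3 (pressure).} I use \eqref{presfinal} from the proof of Lemma \ref{Lema3press}, obtained from the discrete inf-sup condition \eqref{eq:infsupdiscreta}. Its right-hand side involves $\|\bu-\bu_h\|_{\L^2(\O)^2}$ and $|T-T_h|_{\H^1(\O)}$, which Step 2 bounds by $\mathfrak{A}h$, so $\|\textsf{e}_h\|_{\L^2(\O)}\leq\mathfrak{A}h$. Adding $\|\textsf{p}-\textsf{p}_I\|_{\L^2(\O)}\leq Ch^{s}|\textsf{p}|_{\H^s(\O)}$ finishes the pressure estimate.

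The main obstacle is the bookkeeping in Step 1. The generic constants $\mathfrak{A}(\cdot)$ must be tracked so that the absorbed coefficient matches the stated smallness condition, and one must check that no hidden $h$-dependence enters through $\|\bu_h\|$; the latter is controlled by the discrete stability bound $\|\bu_h\|_{\L^2(\O)^2}\leq(\alpha_2\nu_*)^{-1}\|\bb{f}\|_{\L^2(\O)^2}$. The rates also differ across the lemmas: $h$ in Lemma \ref{Lema1temp} versus $h^s$ elsewhere. I will use the minimal rate $h$ throughout, which is what the statement asserts.
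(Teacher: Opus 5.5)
Your proposal is correct and follows the paper's proof essentially step for step. Like the paper, you substitute the bound of Lemma~\ref{Lema1temp} into Lemma~\ref{Lema2vel}, absorb the $\|\bb{e}_h\|_{\L^2(\O)^2}$ term, back-substitute to get $|E_h|_{\H^1(\O)}\leq\mathfrak{A}h$, and then feed the velocity and temperature errors into \eqref{presfinal}, concluding with the interpolation estimates and the triangle inequality. Your reading of the absorbed coefficient as $\alpha_2\nu_*-\mathfrak{A}(\bu)\mathfrak{A}(T)(\widetilde{\alpha}_2\kappa_*-\mathfrak{A}(T))^{-1}$ is what the substitution actually produces; the paper writes this factor as a product with $(\widetilde{\alpha}_2\kappa_*-\mathfrak{A}(T))$, which is evidently a slip.
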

\begin{proof}
For this, we combine Lemmas \ref{Lema1temp}, \ref{Lema2vel} and Lemma \ref{Lema3press}. We consider $\bu_I \in \bV_h$ and $T_I \in \H_h$ as the interpolants of $\bu \in \bV$ and $T \in \H$ given by Lemmas \ref{prop:errorinterp} and \ref{estimate2gen}, respectively. We also introduce $ \textsf{p}_I \in \Q_h$ as follows: for each $E \in \O_h$, $ \textsf{p}_I|_{E} \coloneqq \Pi^{0,E}_{k-1} \textsf{p}$. As a direct consequence of Lemmas \ref{prop:errorinterp}, \ref{prop:bramblehilbert} and Lemma \ref{estimate2gen} combined with \eqref{eq:embedding}, we obtain
\begin{equation*}\label{eq:int1}
\begin{split}
\|\bu-\bu_I\|_{\L^2(\O)^2} &\leq Ch^{s}|\bu|_{\H^s(\O)^2}, \quad |T-T_I|_{\H^1(\O)} \leq Ch^{s}|T|_{\W^{s+1,p}(\O)}, \\
\quad \| \textsf{p}- \textsf{p}_I\|_{\L^2(\O)} &\leq Ch^{s}| \textsf{p}|_{\H^s(\O)}.
\end{split}
\end{equation*}
Let us define $E_h\coloneqq T_I-T_h$, $\bb{e}_h\coloneqq \bu_I-\bu_h$ and $\textsf{e}_h\coloneqq \textsf{p}_I-\textsf{p}_h$, and recall from Lemmas \ref{Lema1temp} and \ref{Lema2vel} the estimates \eqref{estimafinal1} and \eqref{estimatee_h}. Now, replacing \eqref{estimafinal1} into \eqref{estimatee_h}, we obtain
\begin{equation*}
\left(\alpha_2\nu_* - \mathfrak{A}(\bu,T)\left(\widetilde{\alpha}_2\kappa_* - \mathfrak{A}(T)\right)\right)\|\bb{e}_h\|_{\L^2(\O)^2} \leq \mathfrak{A}(\bu,T,\bb{f},g,\kappa,\nu)h,
\end{equation*}
where, defining $\mathfrak{C}(\bu,T) \coloneqq\left(\alpha_2\nu_* - \mathfrak{A}(\bu,T)\left(\widetilde{\alpha}_2\kappa_* - \mathfrak{A}(T)\right)\right)^{-1}$, we deduce that 
\begin{equation}\label{errore_h}
\|\bb{e}_h\|_{\L^2(\O)^2} \leq \mathfrak{C}(\bu,T)\mathfrak{A}(\bu,T,\bb{f},g,\kappa,\nu)h.
\end{equation}
Next, replacing \eqref{errore_h} into \eqref{estimafinal1} we obtain
\begin{equation}\label{errorT_h}
|E_h|_{\H^1(\O)} \leq \mathfrak{A}(\bu,T,\bb{f},g,\kappa,\nu)h.
\end{equation}
Finally, invoking Lemma \ref{Lema3press}, we combine \eqref{errore_h} and \eqref{errorT_h} with \eqref{presfinal} in order to obtain
\begin{equation*}\label{errorp_h}
\beta\|\textsf{e}_h\|_{\L^2(\O)} \leq \mathfrak{A}(\bu,T,\textsf{p},\bb{f},g,\kappa,\nu)h.
\end{equation*}
Finally, using triangle inequality we conclude the proof.
\end{proof}

\section{Numerical experiments}
\label{sec:numericos}

In this section, we report some numerical experiments to evaluate the performance of the proposed VEM. Our goal is to compute the experimental convergence rates in the norms used in the theoretical analysis. The results of this section were obtained using a MATLAB code with $k=0$, where the nonlinear problem \eqref{eq:DHW2discreto} was solved using the fixed-point iteration described below in \textbf{Algorithm 1}. The refinement parameter $N$ used to characterize each mesh is the number of elements on each edge of $\Omega$. In Figure \ref{FIG:meshes} we present the meshes considered for this test, which are: \\
\begin{itemize}
\item $\CT_h$: Triangles,
\item $\mathcal{Q}_h$: Uniform squares,
\item $\mathcal{NC}_h$: Non-convex polygons,
\item $\mathcal{DQ}_h$: Deformed squares,
\item $\mathcal{H}_h$: Deformed hexagons.
\end{itemize}

\begin{figure}[h]
	\begin{center}
		\begin{minipage}{13cm}
                          \centering\includegraphics[height=4cm, width=4cm]{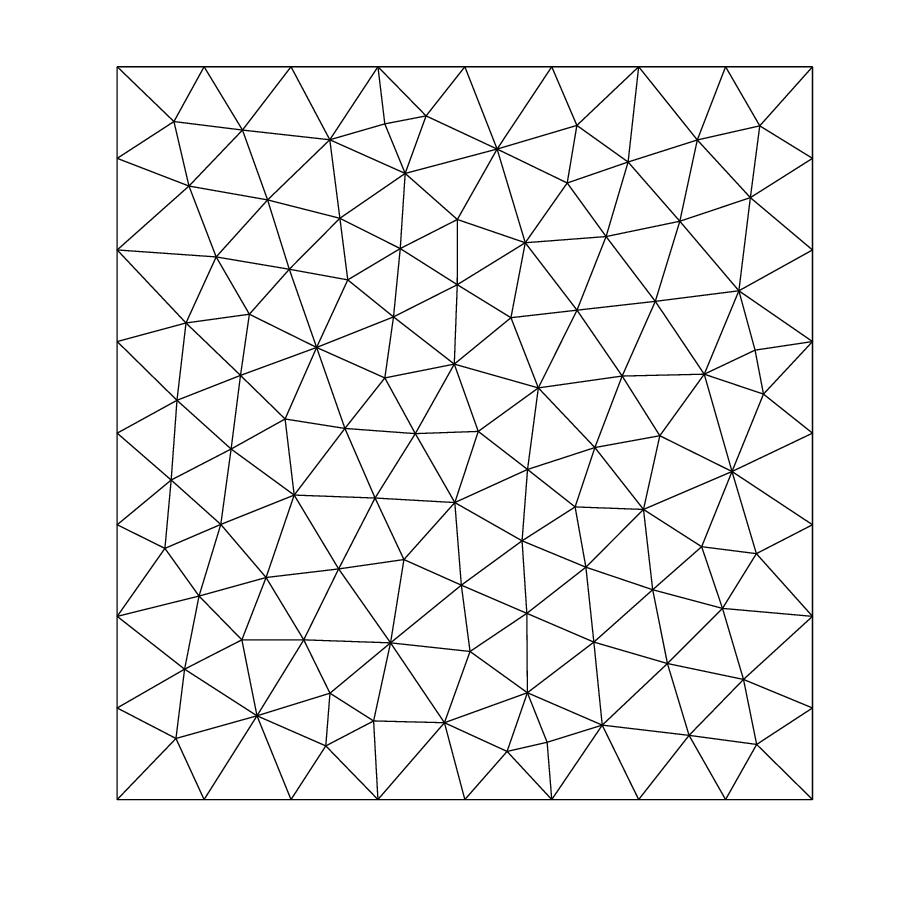}
                         \centering\includegraphics[height=4cm, width=4cm]{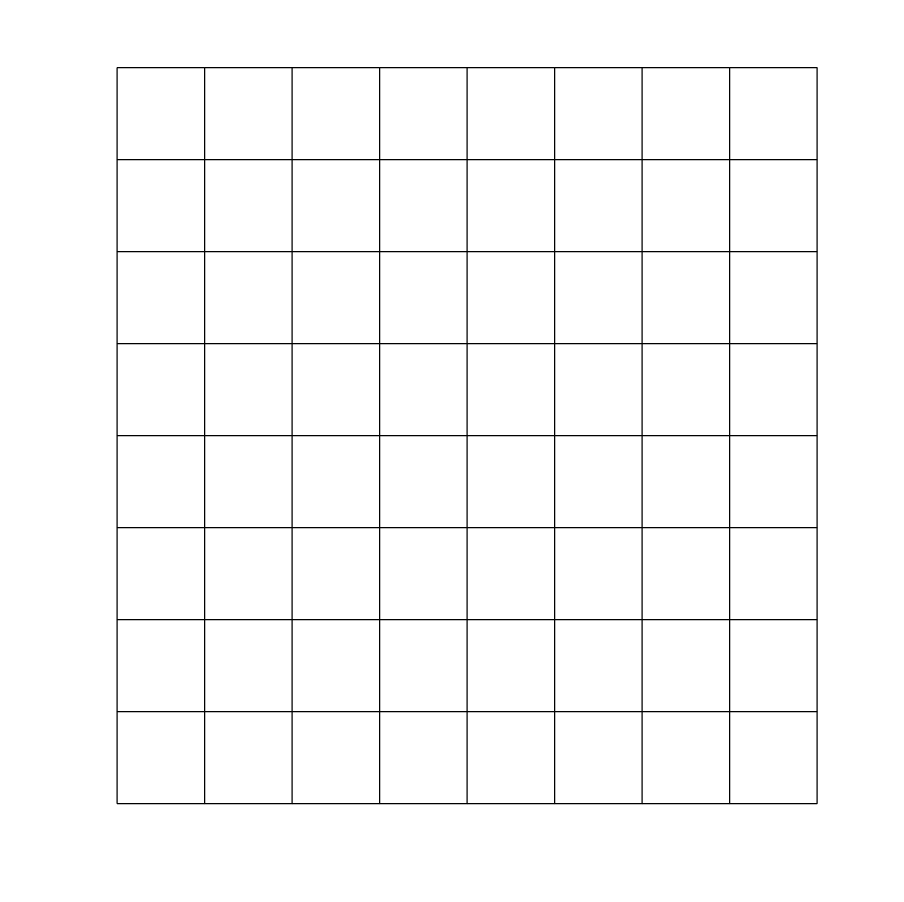}
                          \centering\includegraphics[height=4cm, width=4cm]{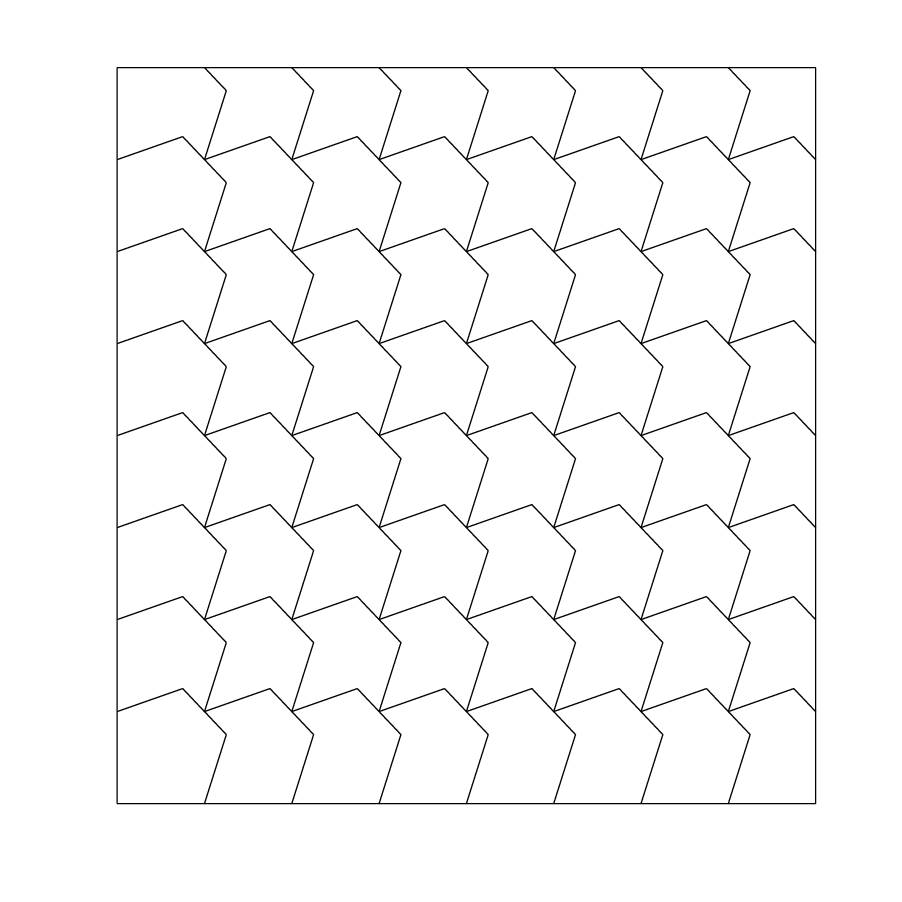}\\
                          \centering\includegraphics[height=4cm, width=4cm]{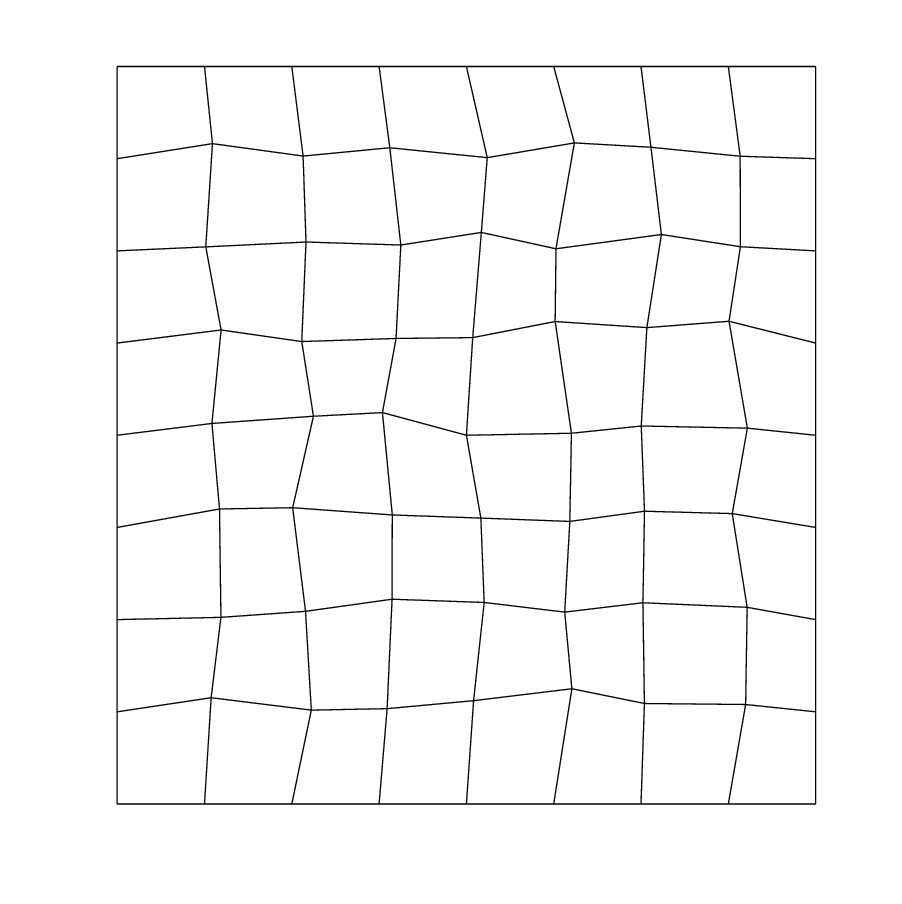}
                          \centering\includegraphics[height=4cm, width=4cm]{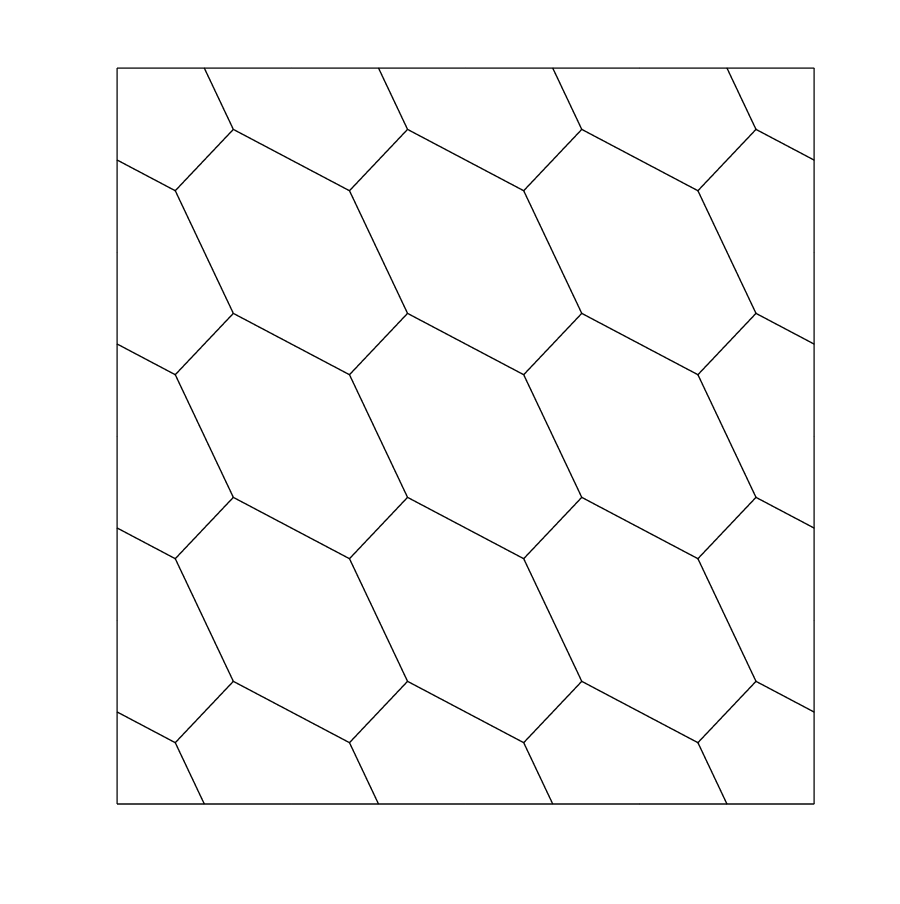}
                   \end{minipage}
		\caption{Initial meshes. From top left to bottom right: $\CT_h$, $\mathcal{Q}_h$, $\mathcal{NC}_h$, $\mathcal{DQ}_h$ and $\mathcal{H}_h$ with $N=8$.
}
		\label{FIG:meshes}
	\end{center}
\end{figure}

Let us describe the fixed-point iteration used to solve the coupled problem \eqref{eq:DHW2discreto}. \\

\noindent \textbf{Algorithm 1: Fixed-point iteration}

\noindent\rule{13cm}{0.3pt} \\
\noindent Input: Initial mesh $\O_h$, initial guess $(\bu_h^0,\textsf{p}_h^0,T_h^0)\in \bV_h \times\Q_h\times\bVV_h$, the coefficients $\nu(\cdot)$ and $\kappa(\cdot)$, $\boldsymbol{f}_h \in \L^2(\O)^2$, $g_h\in\L^{2}(\O)$, and $\textsf{tol}=10^{-6}$.
\begin{enumerate}
\item[1:] For $n\geq0$, find $(\bu_h^{n+1},\textsf{p}_{h}^{n+1})\in\bV_h\times\Q_h$ such that
\begin{equation*}
\begin{split}
a_h(T_h^n;\bu_h^{n+1},\bv_h) + c_{N,h}^{\textit{skew}}(\bu_h^n;\bu_h^{n+1},\bv_h)
+
c_{F,h}(\bu_h^n;\bu_h^{n+1},\bv_h) \\
+
d_h(\bu_h^{n+1},\bv_h)
+
b(\bv_h,\textsf{p}_h^{n+1})&= (\boldsymbol{f}_h,\bv_h)_{0,\O},
\\
b(\bu_h^{n+1},\textsf{q}_h) &= 0,
\end{split}
\end{equation*}
for every $(\bv_h,\textsf{q}_h)\in\bV_h\times\Q_h$. Then, $T_h^{n+1} \in \bVV_h$ is found as the solution of
\begin{equation*}
\mathfrak{a}_h(T_h^{n};T_h^{n+1},S_h)+\mathfrak{c}_h^{\textit{skew}}(\bu_h^{n+1};T_h^{n+1},S_h)=(g_h,S_h)_{0,\O},
\end{equation*}
for every $S_h\in\bVV_h$.
\item[2:] If $|(\bu_h^{n+1},\textsf{p}_h^{n+1},T_h^{n+1})-(\bu_h^{n},\textsf{p}_h^n,T_h^n)|>\textsf{tol}$, set $n\leftarrow n+1$ and go to step 1. Otherwise, return $(\bu_h,\textsf{p}_h,T_h)=(\bu_h^{n+1},\textsf{p}_h^{n+1},T_h^{n+1})$. Here, $|\cdot|$ denotes the Euclidean norm.
\end{enumerate}
\noindent\rule{13cm}{0.4pt}

For this algorithm, we need to describe the bilinear forms $S_{1}^{E}(\cdot,\cdot)$ and $S_{2}^{E}(\cdot,\cdot)$ that satisfy \eqref{eq:S_v} and \eqref{eq:S_T}, respectively. Proceeding as in \cite{MR2997471}, for the temperature we consider the so-called  dofi--dofi stabilization, which is defined as follows: for each $E\in\O_h$, we denote by $\vec{T}_h$ and $\vec{R}_h$ the real-valued vectors containg the values of the local degrees of freedom associated with $T_h$ and $R_h \in \H_h$, respectively. Then, we set
\[
S_{2}^{E}(T_h,S_h):=\vec{T}_h\cdot \vec{S_h},
\]
whereas for the velocity, defining $N_\textsf{edges}$ as the number of edges of the polygon $E\in\O_h$, we consider the following stabilization form (\cite{BMRR}):
\[
S_{1}^{E}(\bu_h,\bv_h):=|E|\sum_{k=1}^{N_\textsf{edges}} \left(\int_{e_k}\bu_h\cdot\bn\right)\left(\int_{e_k}\bv_h\cdot\bn\right).
\]

To compute the errors between the exact and approximated solutions we define, for the velocity and temperature, the following computable quantities:
\[
\textsf{e}({\bu}):=\displaystyle\left(\displaystyle\sum_{E\in\O_h}\|\bu-\bPi_0^{0,E}\bu_h\|_{0,E}^2\right)^{\frac{1}{2}},
\quad
\textsf{e}(T):=\displaystyle\left(\displaystyle\sum_{E\in\O_h}|T-\Pi_1^{0,E}T_h|_{1,E}^2\right)^{\frac{1}{2}},
\]
respectively, whereas for the pressure we consider $\textsf{e}(\textsf{p}):=\|\textsf{p}-\textsf{p}_h\|_{0,\O}$.


\subsection{Square Domain} We begin by testing the method on the square $\O:=(0,3)^2$. The viscosity coefficient $\nu(\cdot)$ and the thermal diffusivity coefficient $\kappa(\cdot)$ are given as follows:
\begin{enumerate}
\item Test 1: $\nu(T)=T+1$ and $\kappa(T)=3$.
\item Test 2: $\nu(T)=T+1$ and $\kappa(T)=1+\exp(-T)$.
\item Test 3: $\nu(T)=0.1+\exp(T/100)$ and $\kappa(T)=2+\sin(T)$.
\end{enumerate}
As manufactured solutions to problem \eqref{eq:DH},	 we choose
\begin{align*}
  \bu(x_1,x_2)&=\curl(\psi), \qquad \psi(x_1,x_2)=\exp(-5(x-1)^2-5(y-1)^2)\\
  \textsf{p}(x_1,x_2)&=\cos(\pi x_1/3)\cos(\pi x_2/3), \\
  T(x_1,x_2)&=x_1^2x_2^2(x_1-3)^2(x_2-3)^2.
\end{align*}

On the following tables, \textsf{it} and \textsf{DoF} denote the number of iterations of the fixed-point iteration and total degrees of freedom associated to the spaces $\bV_h, \Q_h$ and $\H_h$, respectively. We begin by reporting the obtained results for the configuration of Test 1. 
\begin{table}[h]
 \caption{Test 1: Computed errors and convergence rates for different polygonal meshes.}
 \label{tabla1}
 \begin{center}
 \resizebox{0.75\textwidth}{!}{
 \begin{tabular}{ccccccccccccc}
 & \textsf{DoF} & $N$ & $\textsf{it}$ & $\textsf{e}(\bu)$ & $r(\bu)$& $\textsf{e}(T)$ & $r(T)$ & $\textsf{e}(\textsf{p})$ & $r(\textsf{p})$ \\ 
\hline \hline
\rule{0pt}{3ex}
\multirow{5}{*}{$\CT_h$} & 669  & 8  & 1 & 0.37859 & - & 10.16365 & - & 0.31143 & - \\
& 2675 & 16 & 1 & 0.18095 & 1.07 & 5.12609 & 0.99 & 0.15877 & 0.97 \\
& 10587 & 32 & 1 & 0.09257 & 0.97 & 2.63609 & 0.96 & 0.07971 & 0.99 \\
& 42497 & 64 & 1 & 0.04608 & 1.01 & 1.32794 & 0.99 & 0.03944 & 1.01 \\
& 171309 & 128 & 1 & 0.02296 & 1.00 & 0.66411 & 1.00 & 0.01955 & 1.01 \\
\hline \hline
\rule{0pt}{3ex}
\multirow{5}{*}{$\mathcal{Q}_h$} & 289 & 8 & 1 & 0.73951 & - & 17.99615 & - & 0.54818 & - \\
& 1089 & 16 & 1 & 0.33279 & 1.15 & 9.40367 & 0.94 & 0.27341 & 1.00 \\
& 4225 & 32 & 1 & 0.16616 & 1.00 & 4.76212 & 0.98 & 0.13945 & 0.97 \\
& 16641 & 64 & 1 & 0.08308 & 1.00 & 2.38889 & 1.00 & 0.07016 & 0.99 \\
& 66049 & 128 & 1 & 0.04154 & 1.00 & 1.19544 & 1.00 & 0.03514 & 1.00 \\
\hline \hline
\rule{0pt}{3ex}
\multirow{5}{*}{$\mathcal{NC}_h$} & 513 & 8 & 1 & 1.03981 & - & 18.80783 & - & 0.57982 & - \\
& 2049 & 16 & 1 & 0.35096 & 1.57 & 9.51943 & 0.98 & 0.27503 & 1.08 \\
& 8193 & 32 & 1 & 0.16665 & 1.07 & 4.77680 & 0.99 & 0.13974 & 0.98 \\
& 32769 & 64 & 1 & 0.08309 & 1.00 & 2.39072 & 1.00 & 0.07023 & 0.99 \\
& 131073 & 128 & 1 & 0.04154 & 1.00 & 1.19566 & 1.00 & 0.03515 & 1.00 \\
\hline \hline
\rule{0pt}{3ex}
\multirow{5}{*}{$\mathcal{DQ}_h$} & 289 & 8 & 1 & 0.73540 & - & 18.23153 & - & 0.56442 & - \\
& 1089 & 16 & 1 & 0.33402 & 1.14 & 9.49409 & 0.94 & 0.27473 & 1.04 \\
& 4225 & 32 & 1 & 0.16735 & 1.00 & 4.79847 & 0.98 & 0.14037 & 0.97 \\
& 16641 & 64 & 1 & 0.08355 & 1.00 & 2.40387 & 1.00 & 0.07056 & 0.99 \\
& 66049 & 128 & 1 & 0.04176 & 1.00 & 1.20200 & 1.00 & 0.03533 & 1.00 \\
\hline \hline
\rule{0pt}{3ex}
\multirow{5}{*}{$\mathcal{H}_h$} & 545 & 8 & 1 & 0.71700 & - & 18.45831 & - & 0.51780 & - \\
& 1857 & 16 & 1 & 0.33488 & 1.10 & 9.48519 & 0.96 & 0.26241 & 0.98 \\
& 6785 & 32 & 1 & 0.16621 & 1.01 & 4.77338 & 0.99 & 0.13580 & 0.95 \\
& 25857 & 64 & 1 & 0.08308 & 1.00 & 2.39036 & 1.00 & 0.06914 & 0.97 \\
& 100865 & 128 & 1 & 0.04154 & 1.00 & 1.19562 & 1.00 & 0.03487 & 0.99 \\
\hline \hline
 \end{tabular}}
 \end{center}
 \end{table}

From Table \ref{tabla1}, we observe that the results are consistent with Theorem \ref{thm:error_estimates} and also with the numerical test in \cite[Section 5]{BernardiDH}. We remark that these results are independent of the mesh under consideration on the experiment, proving that for any polygonal mesh, the method is capable of capturing the physical properties of the model. We remark that only one interation was needed on Algorithm 1 to achieve the results.
For Test 2, the results are reported in Table \ref{tabla2}. From this table we observe that once again, the proposed method is convergent, independent of the polygonal mesh, and the rates of convergence for each of the variables is the expected according to the theoretical results.
Finally, in Table \ref{tabla3} we present the computed errors and convergence rates obtained for Test 3 for the polygonal meshes presented in Figure \ref{FIG:meshes}. Here, the convergence rates are optimal, according to Theorem \ref{thm:error_estimates}. Finally, in Figure \ref{fig:plots} we present the exact and approximated solutions for this test, when the mesh is $\mathcal{Q}_h$. For the other meshes, the plots show no differences.


 \begin{table}[h]
 \caption{Test 2: Computed errors and convergence rates for different polygonal meshes.}
 \label{tabla2}
 \begin{center}
 \resizebox{0.75\textwidth}{!}{
 \begin{tabular}{ccccccccccccc}
 & \textsf{DoF} & $N$ & $\textsf{it}$ & $\textsf{e}(\bu)$ & $r(\bu)$& $\textsf{e}(T)$ & $r(T)$ & $\textsf{e}(\textsf{p})$ & $r(\textsf{p})$ \\ 
\hline \hline
\rule{0pt}{3ex}
\multirow{5}{*}{$\CT_h$} & 669  & 8  & 1 & 0.37859 & - & 10.35088 & - & 0.31143 & - \\
& 2675 & 16  & 1 & 0.18095 & 1.07 & 5.14972 & 1.01 & 0.15877 & 0.97  \\
& 10587 & 32  & 1 & 0.09257 & 0.97 & 2.63891 & 0.96 & 0.07971 & 0.99 \\
& 42497 & 64 & 1 & 0.04608 & 1.01 & 1.32813 & 0.99 & 0.03944 & 1.01 \\
& 171309 & 128 & 1 & 0.02296 & 1.00 & 0.66412 & 1.00 & 0.01955 & 1.01 \\
\hline \hline
\rule{0pt}{3ex}
\multirow{5}{*}{$\mathcal{Q}_h$} & 289 & 8 & 1 & 0.73951 & - & 18.48968 & - & 0.54818 & - \\
& 1089 & 16 & 1 & 0.33279 & 1.15 & 9.47961 & 0.96 & 0.27341 & 1.00 \\
& 4225 & 32 & 1 & 0.16616 & 1.00 & 4.77209 & 0.99 & 0.13945 & 0.97 \\
& 16641 & 64 & 1 & 0.08308 & 1.00 & 2.39016 & 1.00 & 0.07016 & 0.99 \\
& 66049 & 128 & 1 & 0.04154 & 1.00 & 1.19559 & 1.00 & 0.03514 & 1.00 \\
\hline \hline
\rule{0pt}{3ex}
\multirow{5}{*}{$\mathcal{NC}_h$} & 513 & 8 & 1 & 1.03981 & - & 18.97966 & - & 0.57982 & - \\
& 2049 & 16 & 1 & 0.35096 & 1.57 & 9.54372 & 0.99 & 0.27503 & 1.08 \\
& 8193 & 32 & 1 & 0.16665 & 1.07 & 4.77996 & 1.00 & 0.13974 & 0.98 \\
& 32769 & 64 & 1 & 0.08309 & 1.00 & 2.39112 & 1.00 & 0.07023 & 0.99 \\
& 131073 & 128 & 1 & 0.04154 & 1.00 & 1.19571 & 1.00 & 0.03515 & 1.00 \\
\hline \hline
\rule{0pt}{3ex}
\multirow{5}{*}{$\mathcal{DQ}_h$} & 289 & 8 & 1 & 0.73497 & - & 18.78999 & - & 0.54078 & - \\
& 1089 & 16 & 1 & 0.33527 & 1.13 & 9.56666 & 0.97 & 0.27554 & 0.97 \\
& 4225 & 32 & 1 & 0.16714 & 1.00 & 4.80726 & 0.99 & 0.14034 & 0.97 \\
& 16641 & 64 & 1 & 0.08351 & 1.00 & 2.40351 & 1.00 & 0.07056 & 0.99 \\
& 66049 & 128 & 1 & 0.04177 & 1.00 & 1.20200 & 1.00 & 0.03533 & 1.00 \\
\hline \hline
\rule{0pt}{3ex}
\multirow{5}{*}{$\mathcal{H}_h$} & 545 & 8 & 1 & 0.71700 & - & 18.71139 & - & 0.51780 & - \\
& 1857 & 16 & 1 & 0.33488 & 1.10 & 9.51920 & 0.98 & 0.26241 & 0.98 \\
& 6785 & 32 & 1 & 0.16621 & 1.01 & 4.77772 & 0.99 & 0.13580 & 0.95 \\
& 25857	& 64 & 1 & 0.08308 & 1.00 & 2.39090 & 1.00 & 0.06914 & 0.97 \\
& 100865 & 128 & 1 & 0.04154 & 1.00 & 1.19569 & 1.00 & 0.03487 & 0.99 \\
\hline \hline
 \end{tabular}}
 \end{center}
 \end{table}


 \begin{table}[h]
 \caption{Test 3: Computed errors and convergence rates for different polygonal meshes.}
 \label{tabla3}
 \begin{center}
 \resizebox{0.75\textwidth}{!}{
 \begin{tabular}{ccccccccccccc}
 & \textsf{DoF} & $N$ & $\textsf{it}$ & $\textsf{e}(\bu)$ & $r(\bu)$& $\textsf{e}(T)$ & $r(T)$ & $\textsf{e}(\textsf{p})$ & $r(\textsf{p})$ \\ 
\hline \hline
\rule{0pt}{3ex}
\multirow{5}{*}{$\CT_h$} & 669  & 8  & 1 & 0.37010 & - & 10.36679 & - & 0.30790 & - \\
& 2675 & 16 & 1 & 0.18027 & 1.04 & 5.12431 & 1.02 & 0.15880 & 0.96 \\
& 10587 & 32 & 1 & 0.09253 & 0.96 & 2.63582 & 0.96 & 0.07972 & 0.99 \\
& 42497 & 64 & 1 & 0.04608 & 1.01 & 1.32794 & 0.99 & 0.03944 & 1.02 \\
& 171309 & 128 & 1 & 0.02296 & 1.00 & 0.66411 & 1.00 & 0.01955 & 1.01 \\
\hline \hline
\rule{0pt}{3ex}
\multirow{5}{*}{$\mathcal{Q}_h$} & 289 & 8 & 1 & 0.66501 & - & 19.11512 & - & 0.48543 & - \\
& 1089 & 16 & 1 & 0.33232 & 1.00 & 9.39918 & 1.02 & 0.27143 & 0.84 \\
& 4225 & 32 & 1 & 0.16616 & 1.00 & 4.76180 & 0.98 & 0.13939 & 0.96 \\
& 16641 & 64 & 1 & 0.08308 & 1.00 & 2.38885 & 1.00 & 0.07016 & 0.99 \\
& 66049 & 128 & 1 & 0.04154 & 1.00 & 1.19543 & 1.00 & 0.03514 & 1.00 \\
\hline \hline
\rule{0pt}{3ex}
\multirow{5}{*}{$\mathcal{NC}_h$} & 513 & 8 & 1 & 0.67016 & - & 18.94194 & - & 0.49146 & - \\
& 2049 & 16 & 1 & 0.33253 & 1.01 & 9.51700 & 0.99 & 0.27261 & 0.85 \\
& 8193 & 32 & 1 & 0.16617 & 1.00 & 4.77668 & 0.99 & 0.13967 & 0.96 \\
& 32769 & 64 & 1 & 0.08308 & 1.00 & 2.39071 & 1.00 & 0.07023 & 0.99 \\
& 131073 & 128 & 1 & 0.04154 & 1.00 & 1.19566 & 1.00 & 0.03515 & 1.00 \\
\hline \hline
\rule{0pt}{3ex}
\multirow{5}{*}{$\mathcal{DQ}_h$} & 289 & 8 & 1 & 0.66905 & - & 18.67337 & - & 0.48969 & - \\
& 1089 & 16 & 1 & 0.33325 & 1.01 & 9.49251 & 0.98 & 0.27450 & 0.84 \\
& 4225 & 32 & 1 & 0.16723 & 0.99 & 4.79820 & 0.98 & 0.14040 & 0.97 \\
& 16641 & 64 & 1 & 0.08350 & 1.00 & 2.40319 & 1.00 & 0.07055 & 0.99 \\
& 66049 & 128 & 1 & 0.04175 & 1.00 & 1.20195 & 1.00 & 0.03533 & 1.00 \\
\hline \hline
\rule{0pt}{3ex}
\multirow{5}{*}{$\mathcal{H}_h$} & 545 & 8 & 1 & 0.66485 & - & 18.83858 & - & 0.44172 & - \\
& 1857 & 16 & 1 & 0.33233 & 1.00 & 9.48427 & 0.99 & 0.25626 & 0.79 \\
& 6785 & 32 & 1 & 0.16616 & 1.00 & 4.77327 & 0.99 & 0.13523 & 0.92 \\
& 25857 & 64 & 1 & 0.08308 & 1.00 & 2.39034 & 1.00 & 0.06908 & 0.97 \\
& 100865 & 128 & 1 & 0.04154 & 1.00 & 1.19562 & 1.00 & 0.03486 & 0.99 \\
\hline \hline 
 \end{tabular}}
 \end{center}
 \end{table}

\begin{figure}[H]
	\begin{center}
		\begin{minipage}{13cm}
                          \centering\includegraphics[height=3.7cm, width=3.7cm]{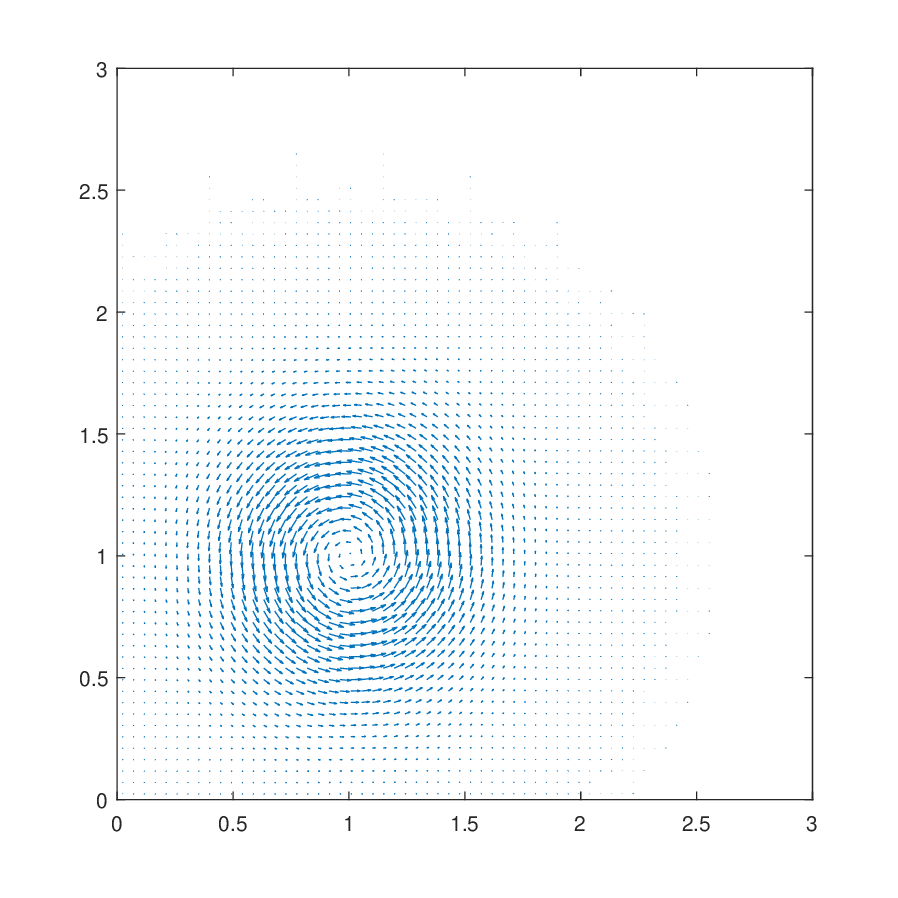}
                          \centering\includegraphics[height=3.7cm, width=3.7cm]{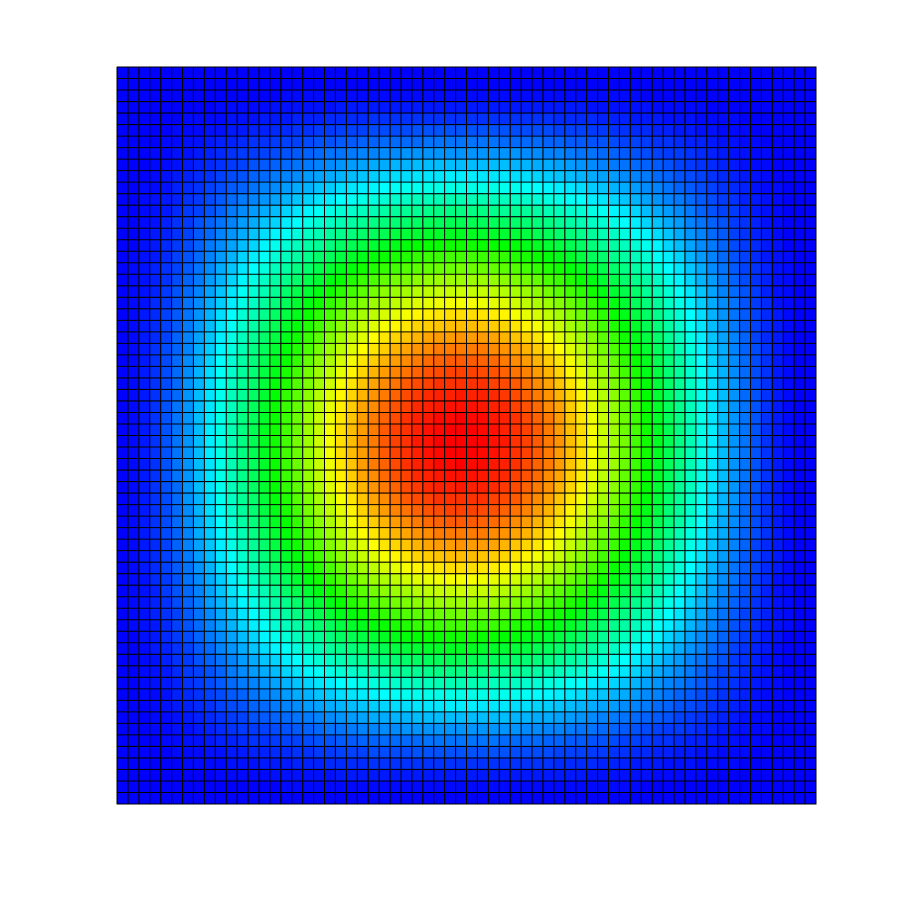}
                          \centering\includegraphics[height=3.7cm, width=3.7cm]{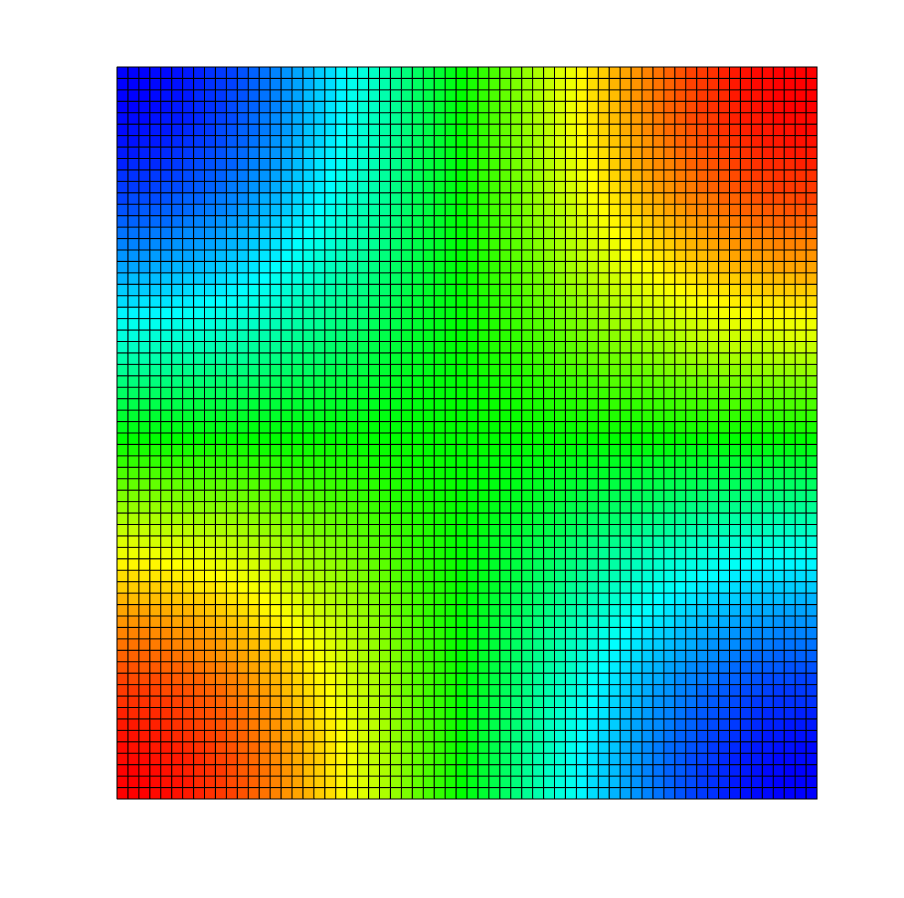}\\
                          \centering\includegraphics[height=3.7cm, width=3.7cm]{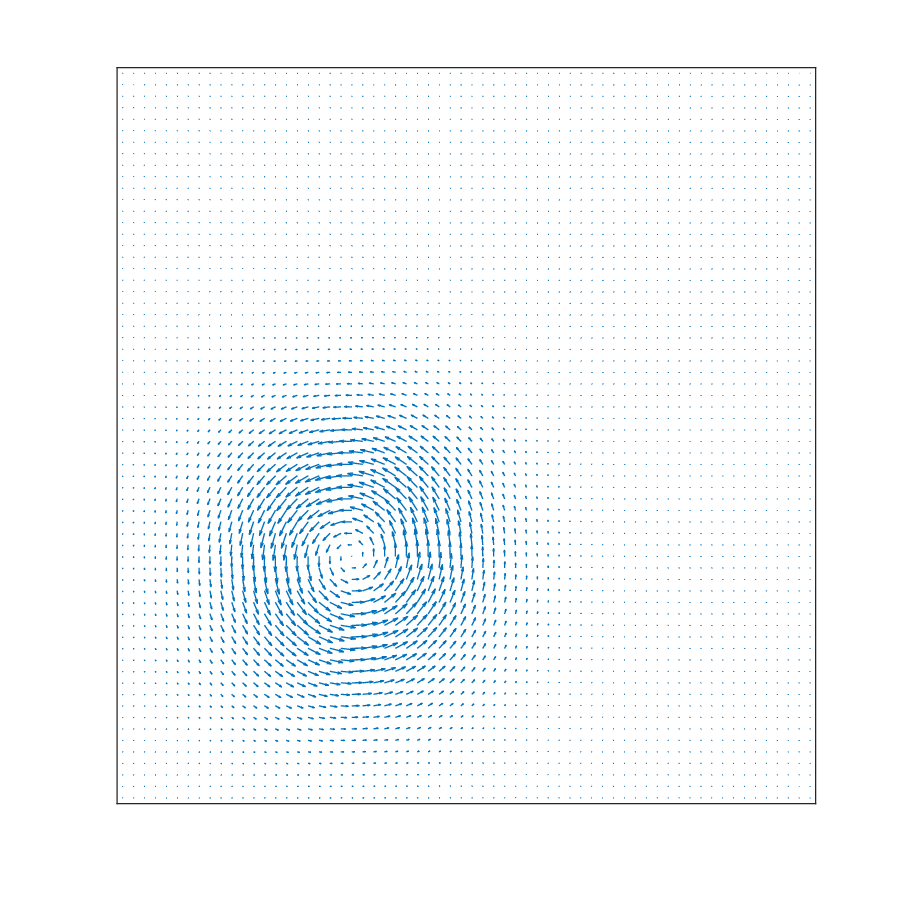}
                          \centering\includegraphics[height=3.7cm, width=3.7cm]{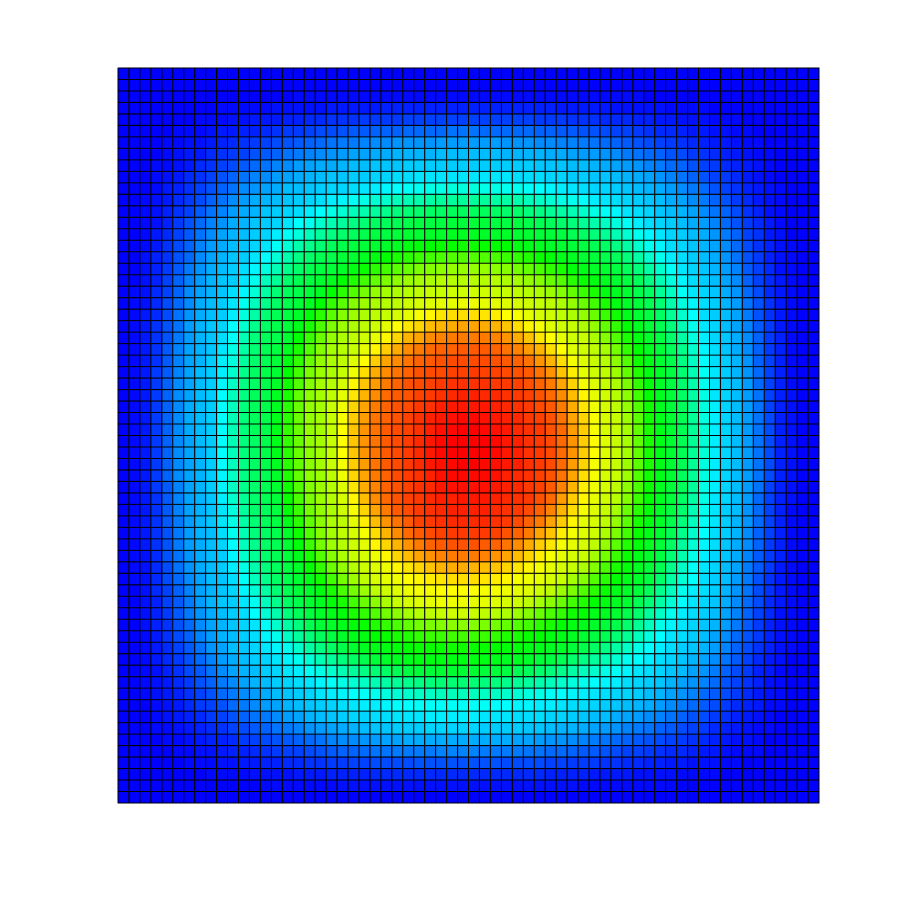}
                          \centering\includegraphics[height=3.7cm, width=3.7cm]{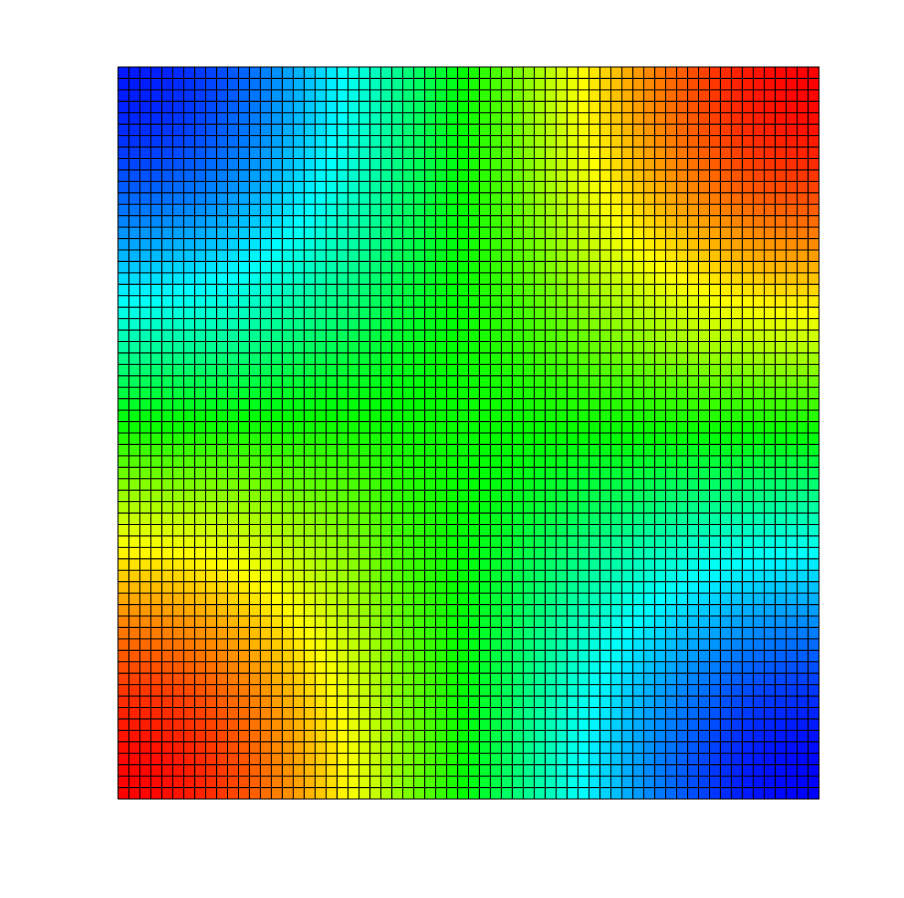}
                   \end{minipage}
		\caption{Exact and approximated solutions for Test 3. First column: $|u|$, $T$ and $\textsf{p}$; second column: $|u_h|$, $T_h$ and $\textsf{p}_h$, plotted using mesh $\mathcal{Q}_h$.}
		\label{fig:plots}
	\end{center}
\end{figure}

Our next aim is to prove computationally, that the proposed VEM scheme is divergence-free at discrete level. To carry out this task, we compute the $\L^2$ norm of the discrete divergence $\div\bu_h$ on different polygonal meshes and refinement levels. In Table \ref{tabla4} we present the quantity $\|\div\bu_h\|_{\L^2(\O)}$ for different polygonal meshes which clearly,  reveals that the method is capable to preserve the incompressibility condition at discrete level for any geometrical configuration of the mesh.
\begin{table}[h]
 \caption{$\|\div \bu_h\|_{0,\O}$ computed for different polygonal meshes.}
 \label{tabla4}
 \begin{center}
 \begin{tabular}{ccccccc}
 \hline \hline 
 \rule{0pt}{3ex}
  $N$ & $\CT_{h}$ & $\mathcal{Q}_h$ & $\mathcal{NC}_h$ & $\mathcal{DQ}_h$  &$\mathcal{H}_h$. \\ \hline \hline 
  \rule{0pt}{3ex}
  8 & 6.49982e-18 & 1.25969e-17 & 2.16493e-17 & 1.40424e-17 & 5.08689e-03 \\
 16 &1.18450e-18 &  3.17447e-18 &  1.11157e-17   & 2.96106e-18  & 6.40267e-04 \\
  32  & 3.15053e-19&   7.28291e-19 &  2.95751e-18 &  7.62836e-19&   7.8849e-05\\
  64  &6.83867e-20  & 1.69481e-19  & 7.68963e-19  & 2.22270e-19   &8.96693e-06\\
    128  & 1.66510e-20 & 4.18205e-19  & 2.08273e-19  & 5.61828e-20 &  9.88791e-07 \\\hline\hline
 \end{tabular}
 \end{center}
 \end{table}
\subsection{Influence of the viscosity}
Now our aim is to observe the performance of the VEM when the viscosity changes. More precisely, we focus our attention for sufficiently small values of this parameter and its influence on the convergence of the method. In order to obtain the most detailed information about this fact, we compute the velocity, pressure and temperature for small values of $\nu$ and different meshes. These results are reported in the following error curves.
\begin{figure}[H]
	\begin{center}
                           \centering\includegraphics[height=4.4cm, width=4.2cm]{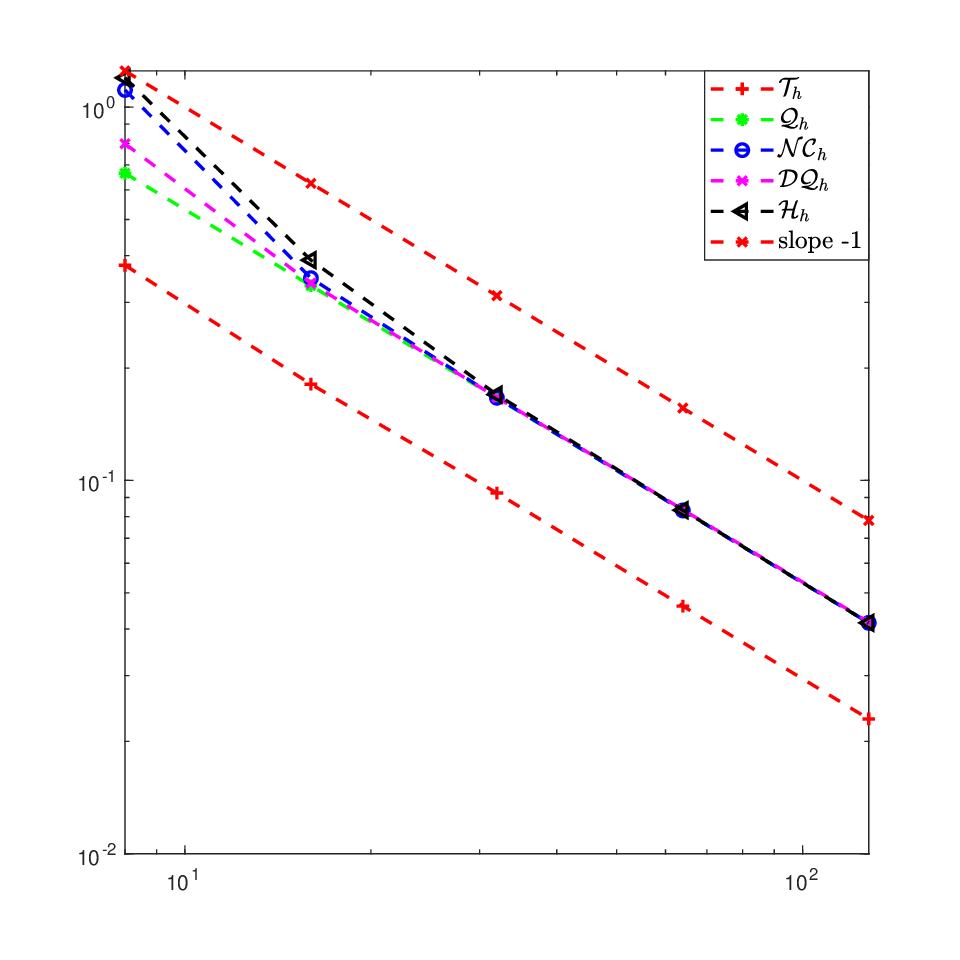}
                          \centering\includegraphics[height=4.4cm, width=4.2cm]{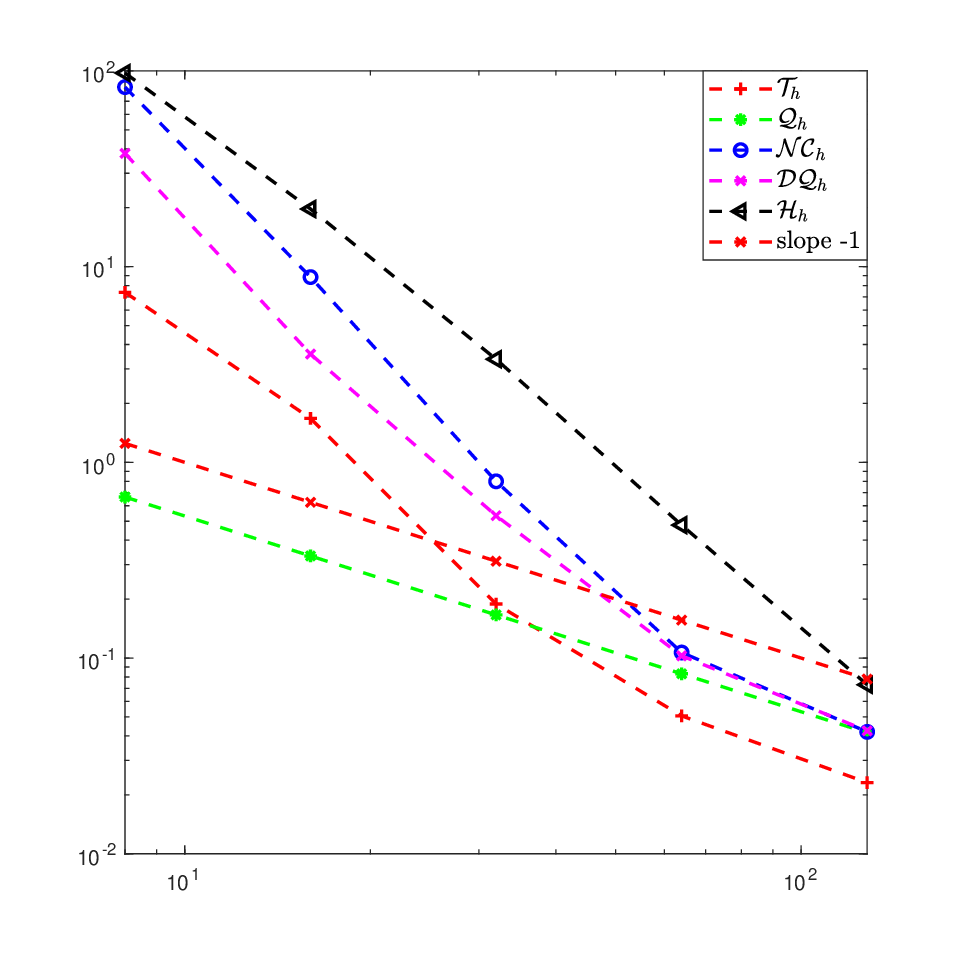}
                       \centering\includegraphics[height=4.4cm, width=4.2cm]{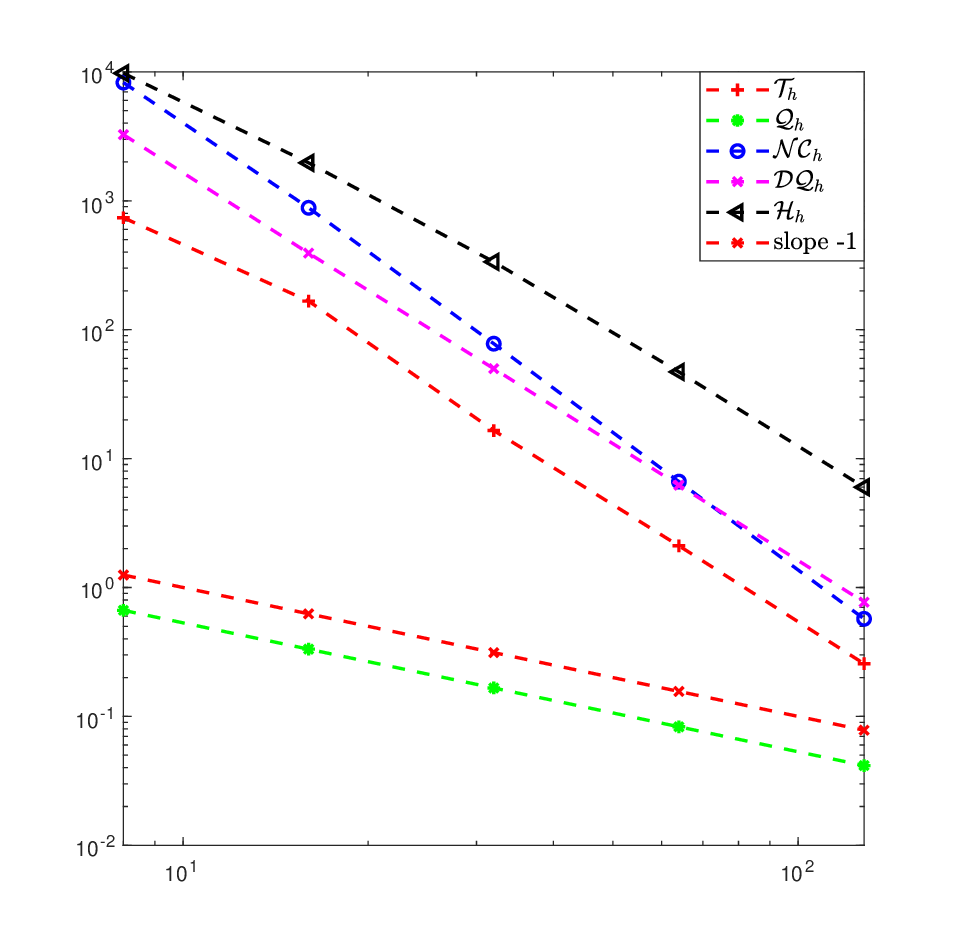}
 		\caption{Test 4: From left to right: velocity error for $\nu =10^{-2},10^{-4},10^{-6}$ for meshes $CT_h$, $\mathcal{Q}_h$, $\mathcal{NC}_h$, $\mathcal{DQ}_h$ and $\mathcal{H}_h$.}
 		\label{FIG:error_vel}
 	\end{center}
 \end{figure}

\begin{figure}[H]
	\begin{center}
                           \centering\includegraphics[height=4.4cm, width=4.2cm]{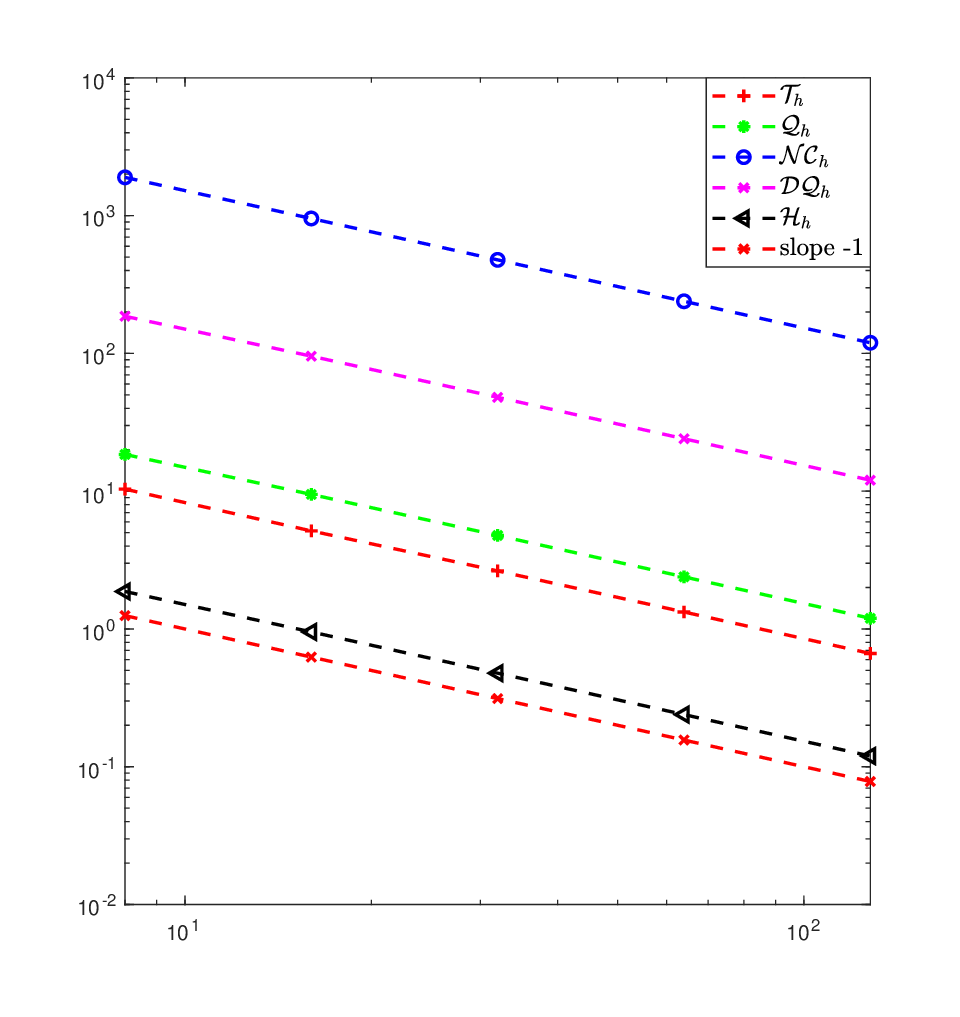}
                           \centering\includegraphics[height=4.4cm, width=4.2cm]{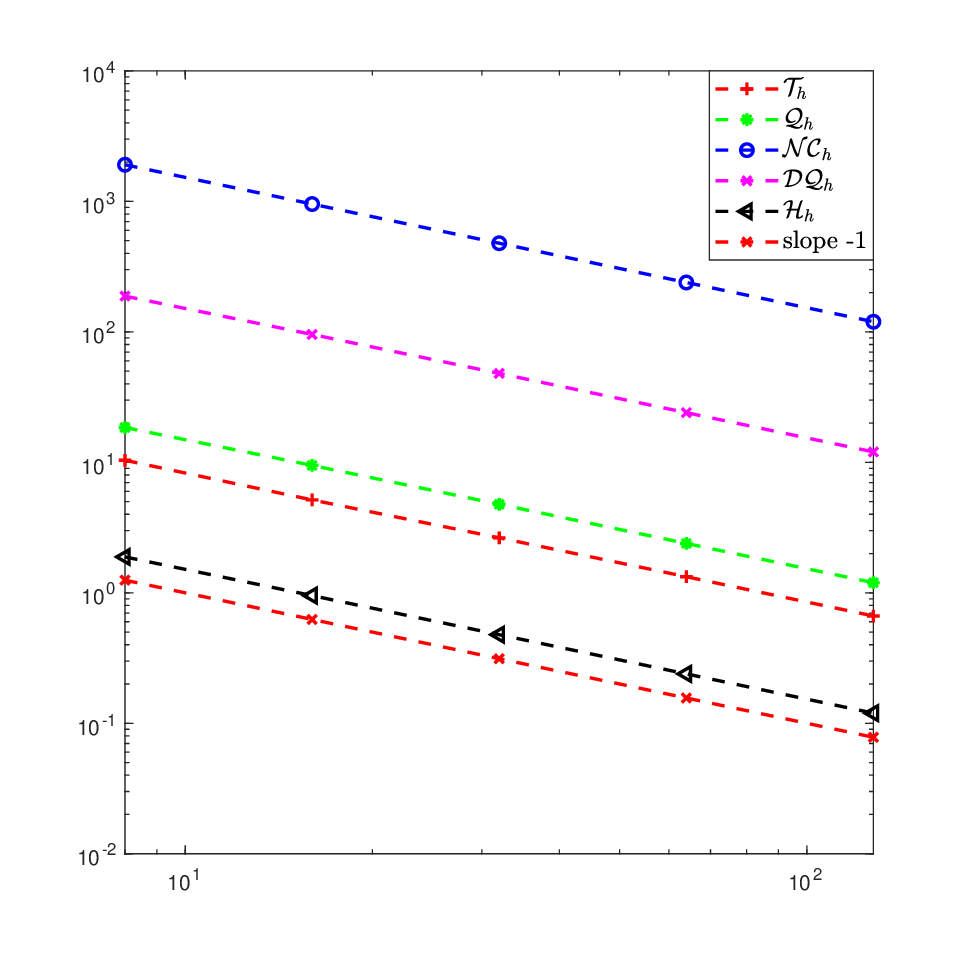}
                           \centering\includegraphics[height=4.4cm, width=4.2cm]{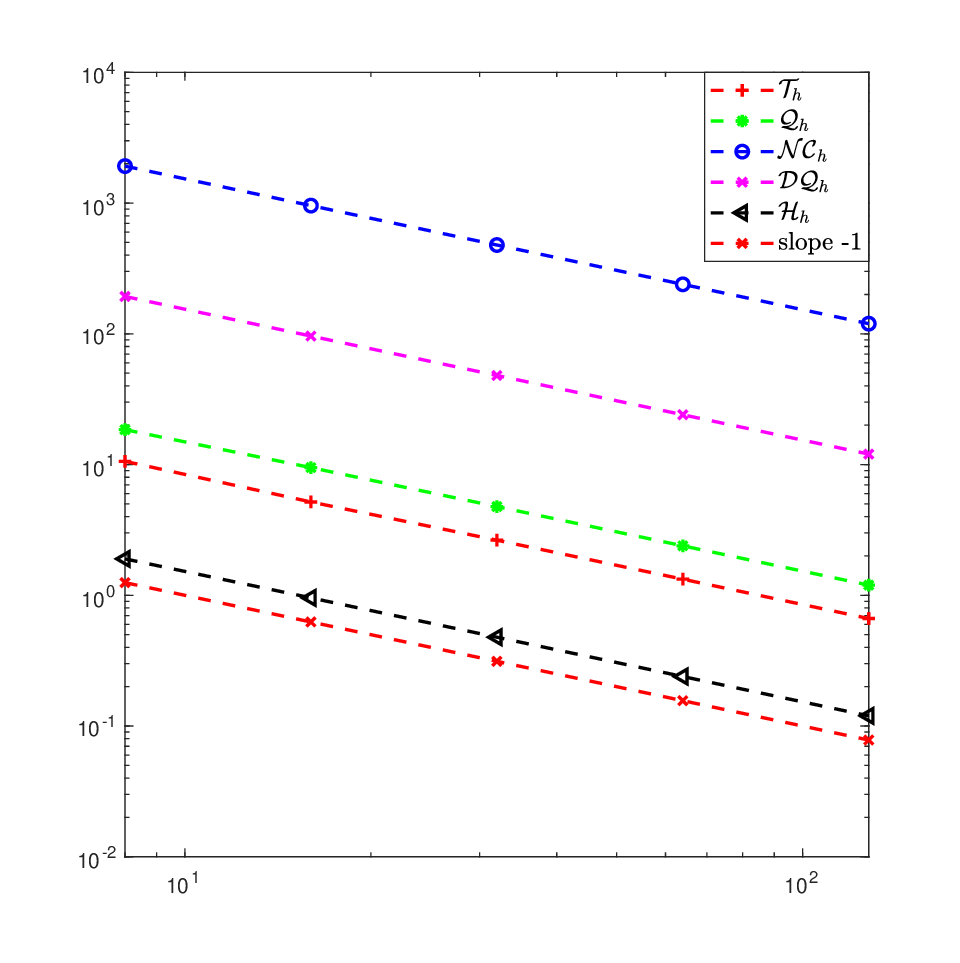}
 		\caption{Test 4: From left to right: temperature error for $\nu =10^{-2},10^{-4},10^{-6}$ for meshes $\CT_h$, $\mathcal{Q}_h$, $\mathcal{NC}_h$, $\mathcal{DQ}_h$ and $\mathcal{H}_h$.}
 		\label{FIG:error_temp}
 	\end{center}
 \end{figure}
 
 \begin{figure}[H]
	\begin{center}
                           \centering\includegraphics[height=4.4cm, width=4.2cm]{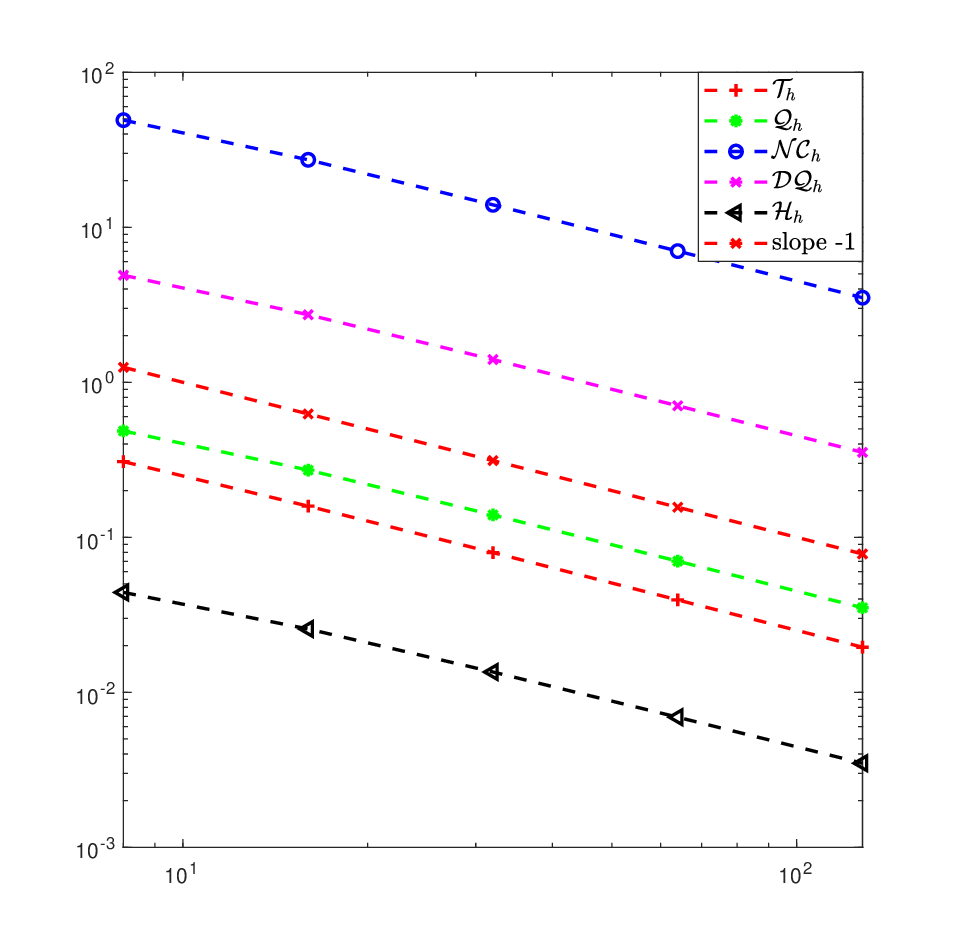}
                           \centering\includegraphics[height=4.4cm, width=4.2cm]{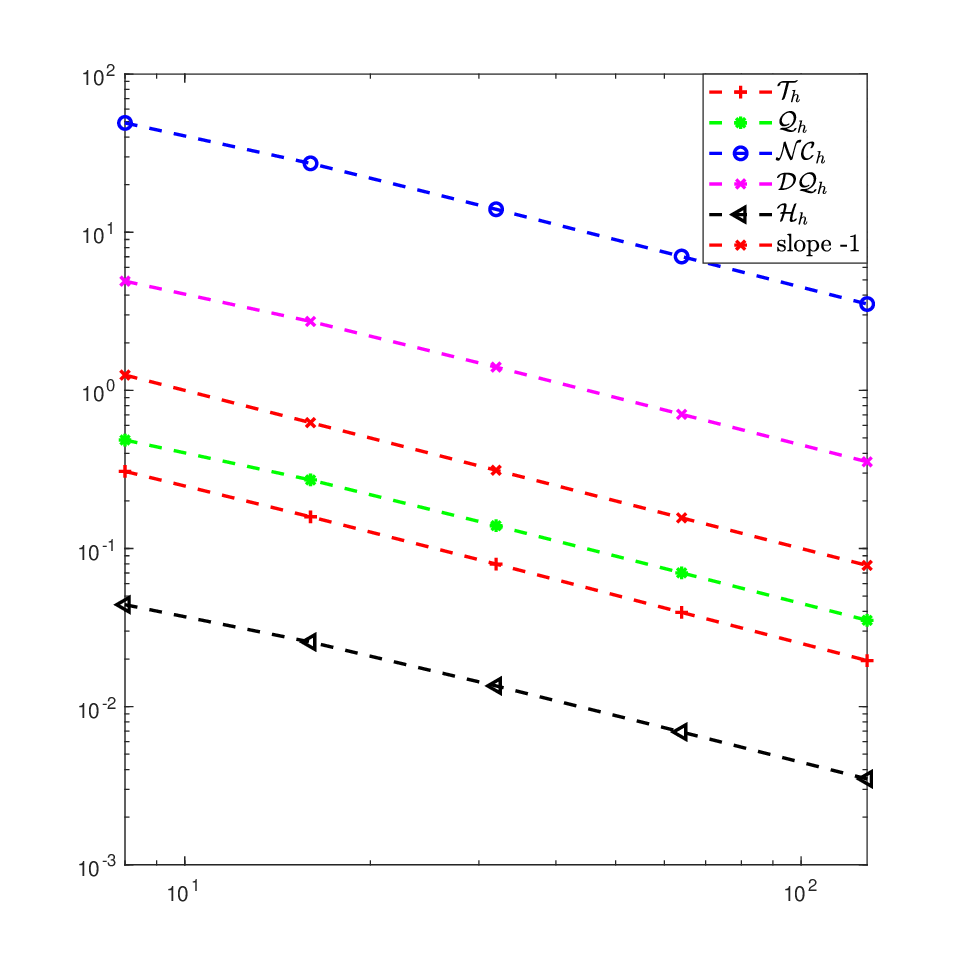}
                           \centering\includegraphics[height=4.4cm, width=4.2cm]{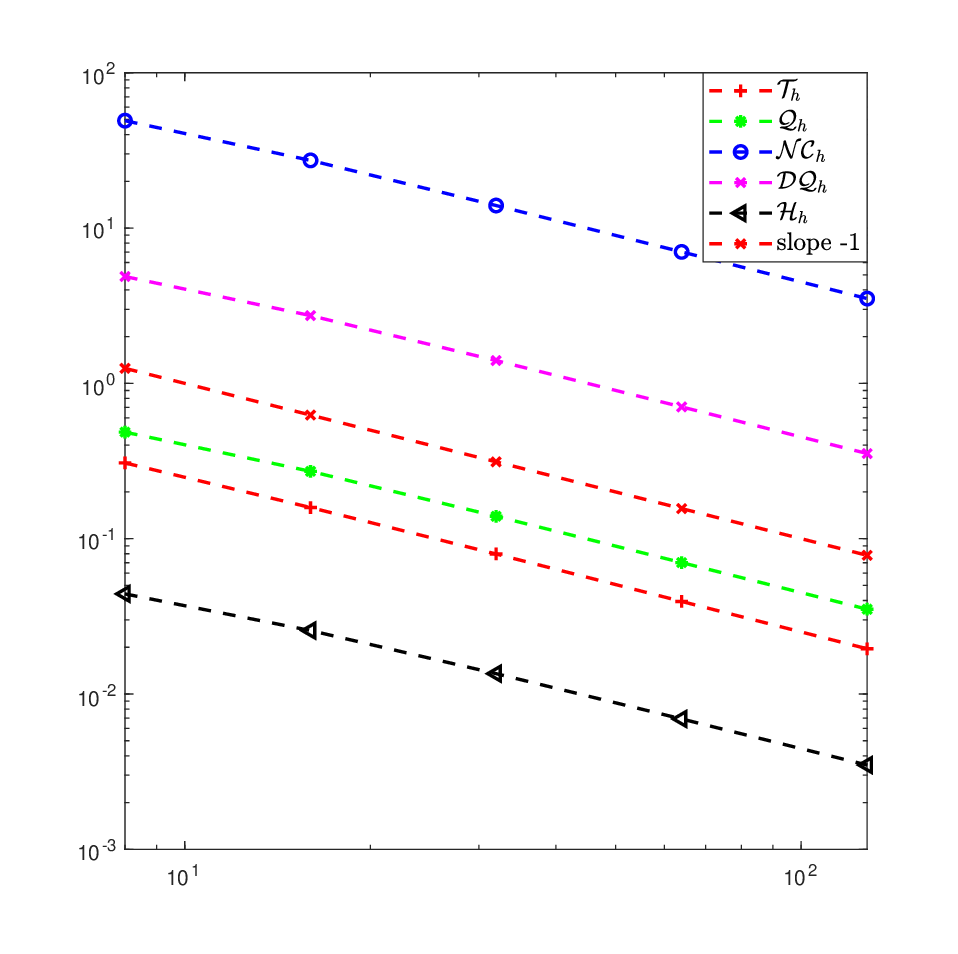}
 		\caption{Test 4: From left to right: pressure error for $\nu =10^{-2},10^{-4},10^{-6}$ for meshes $\CT_h$, $\mathcal{Q}_h$, $\mathcal{NC}_h$, $\mathcal{DQ}_h$ and $\mathcal{H}_h$.}
 		\label{FIG:error_press}
 	\end{center}
 \end{figure}
 
 From Figures \ref{FIG:error_vel}, \ref{FIG:error_temp} and \ref{FIG:error_press} we observe that when $\nu\rightarrow 0$, the order of convergence for the velocity is deteriorated, contrary to the pressure and temperature, which remain stable. We remark that this phenomenon is observed for all the polygonal meshes that we consider.
\textbf{Declaration of competing interest.}
The authors declare that they have no known competing financial interests or personal relationships that could have appeared
to influence the work reported in this paper.

\textbf{Aknowledgments.}
The authors are deeply grateful to Prof. Gonzalo Rivera (Universidad de Los Lagos, Chile) for his comments and suggestions on the computational experiments. This paper is part of Danilo Amigo's Ph.D. thesis in the Ph.D. Program in Applied Mathematics at Universidad del Bío-Bío.

\textbf{Data availability.}
Data will be made available on request.

\bibliographystyle{amsplain}
\bibliography{bib_LOQ}

\end{document}